\newcommand{\beqa}{\begin{eqnarray*}}
\newcommand{\eeqa}{\end{eqnarray*}}
\newcommand{\beqn}{\begin{eqnarray}}
\newcommand{\eeqn}{\end{eqnarray}}
\newcommand{\bQ}{\mathbb Q}
\newcommand{\B}{\mathbb B}
\newcommand{\C}{\mathbb C}
\newcommand{\R}{\mathbb R}
\newcommand{\N}{\mathbb N}
\newcommand{\mcB}{\mathcal B}
\newcommand{\mcD}{\mathcal D}
\newcommand{\mcE}{\mathcal E}
\newcommand{\mcS}{\mathcal S}
\newcounter{cnt1}
\newcounter{cnt2}
\newcounter{cnt3}
\newcommand{\blr}{\begin{list}{$($\roman{cnt1}$)$}
 {\usecounter{cnt1} \setlength{\topsep}{0pt}
 \setlength{\itemsep}{0pt}}}
\newcommand{\bla}{\begin{list}{$($\alph{cnt2}$)$}
 {\usecounter{cnt2} \setlength{\topsep}{0pt}
 \setlength{\itemsep}{0pt}}}
\newcommand{\bln}{\begin{list}{$($\arabic{cnt3}$)$}
 {\usecounter{cnt3} \setlength{\topsep}{0pt}
 \setlength{\itemsep}{0pt}}}
\newcommand{\el}{\end{list}}
\newtheorem{thm}{Theorem}[section]
\newtheorem{lem}[thm]{Lemma}
\newtheorem{cor}[thm]{Corollary}
\newtheorem{Def}[thm]{Definition}
\newtheorem{rem}[thm]{Remark}
\newcommand{\Rem}{\begin{rem} \rm}
\newcommand{\bdfn}{\begin{Def} \rm}
\newcommand{\edfn}{\end{Def}}
\newcommand{\ba}{\begin{array}}
\newcommand{\ea}{\end{array}}
\begin{document}
\begin{center}\large{{\bf{ Approach to the construction of the spaces  $ S{D^p}[\R^{\infty}]$  for $1 \leq p \leq \infty$}}} \\

 
  Hemanta Kalita$^{1}$ and   Bipan Hazarika$^{2,\ast}$\\

$^{1}$Department of Mathematics, Patkai Christian College (Autonomous), Dimapur,\\ Patkai 797103, Nagaland, India.\\

$^{2}$Department of Mathematics, Gauhati University, Guwahati 781014, Assam, India\\
Email:  hemanta30kalita@gmail.com; bh\_rgu@yahoo.co.in;  bh\_gu@gauhati.ac.in
\end{center}
\title{}
\author{}
\thanks{$^{\ast}$The corresponding author}
\thanks{\today} 

\begin{abstract} The objective of this paper is to construct an extension of the  class of Jones distribution Banach spaces $SD^p[\R^n], 1\le p\le \infty,$ which appeared in the book by Gill and Zachary \cite{TG} to  $S{D^p}[\R^{\infty}]$  for $1\leq p \leq \infty.$ These spaces are separable Banach spaces, which contain  the Schwartz distributions as continuous dense embedding. These spaces provide a Banach space structure for Henstock-Kurzweil integrable functions that is similar to the Lebesgue spaces for Lebesgue integrable functions. 
\\
\noindent{\footnotesize {\bf{Keywords and phrases:}}}  Uniformly convex; duality; compact dense embedding; strong Jones spaces; Henstock-Kurzweil integrable function.\\
{\footnotesize {\bf{AMS subject classification \textrm{(2010)}:}}} 46B03, 46B20, 46B25.
\end{abstract}
\maketitle

\maketitle

\pagestyle{myheadings}
\markboth{\rightline {\scriptsize   Kalita and Hazarika}}
        {\leftline{\scriptsize  Approach to the construction of the spaces \dots}}

\maketitle
\section{Introduction and Preliminaries}
The  theory of distributions is based on the action of linear functionals on a space of
test functions.  In the original approach of Schwartz \cite{SC} both test functions and linear functional have a natural topological vector space structure which are not normable. Sobolev gave  powerful reply for this using functions that were Lebesgue type. At the same time Lebesgue integrable functions have some limitations. In Physics particularly in quantum theory Banach space structure needed with non-absolute integrable functions. 
 Gill and Zachary  gave more strong reply than Sobolev (see \cite{TG,GL}) by introducing the family of strong Jones function spaces $S{D^p}[\R^n]$ for $ 1 \leq p \leq \infty,$  which contain the non-absolute integrable functions.   Henstock-Kurzweil integral was first developed by R. Henstock and J. Kurzweil  from Riemann integral with the concept of tagged partitions and gauge functions. Henstock-Kurzweil integral (HK-integral) is a kind of non-absolute integral and contain Lebesgue integral (we refer \cite{AB,AB1,AB2,VB,RH,RH1,ET,TY}).       The most important of the finitely additive measure is the one that generated by HK-integral, which generalize the Lebesgue, Bochner and Pettis integrals, for instance see \cite{AL,RAG,RH,CS}.        As a major drawback of HK-integrable functions space is that it is not naturally a Banach space. 
 
       Y. Yamasaki \cite{YY} developed a theory of Lebesgue measure on $\R^\infty.$ In \cite{GM}, Gill and Myres introduced a theory of Lebesgue measure on $\R^\infty :$ the construction of which almost same as the Lebesgue measure on $\R^n.$
       Throughout our paper, we suppose the notation $\mathbb{R}_{I}^{\infty}$ and assume that $I$ is understood. 
        In this paper, we will focus  on the main class of Banach spaces $SD^p[\mathbb{R}^{\infty}],~1\leq p \leq \infty.$
   \begin{Def}\cite{CS}
   A function $ f:[a,b] \to \mathbb{R} $ is HK-integrable if there exists a function $ F:[a,b] \to \mathbb{R} $ and for every $ \epsilon > 0 $ there is a function $ \delta(t) > 0 $ such that for any $ \delta$-fine partition $ D=\{[u,v], t \} $ of $I_0=[a,b], $ we have $$|| \sum[f(t)(v-u)-F(u,v)]|| < \epsilon,$$ where the sum $ \sum $ is  run over $ D= \{ ([u,v], t) \}$ and $F(u,v)= F(v)-F(u).$     We write $ HK\int_{I_0} f=F(I_0).$
   \end{Def}
 \begin{Def}\cite[Definition 2.5]{TG} 
   Let $ A_n = A \times I_n$ and $ B_n = B \times I_n $ ($n^{th}$ order box sets in $ \mathbb{R}^\infty$). We define 
   \begin{enumerate}
   \item[(a)] $ A_n \cup B_n =(A\cup B) \times I_n;$
   \item[(b)] $ A_n \cap B_n = (A \cap B ) \times I_n; $
   \item[(c)] $ B_{n}^c = B^c \times I_n.$
   \end{enumerate}
   \end{Def}
 \begin{Def}
  \cite[Definition 1.11]{GM} We define $\mathbb{ R}_{I}^{n} =\mathbb{ R}^n \times I_n.$ If $ T $ is a linear transformation on $ \mathbb{R}^n $ and $ A_n = A \times I_n ,$ then $ T_I $ on $ \mathbb{R}_{I}^{n} $ is defined by $ T_I[A_n] = T[A] $. We also define $ B[\mathbb{R}_{I}^{n}]$ to be the Borel $\sigma$-algebra for $ \mathbb{R}_{I}^{n},$ where the topology on $ \mathbb{R}_{I}^{n} $ is defined via the class of open sets $ D_n = \{ U \times I_n : U $ is open in $ \mathbb{R}^n\}.$ For any $ A \in B[\mathbb{R}^n], $ we define $\lambda_\infty(A_n) $ on $ \mathbb{R}_{I}^{n} $ by product measure $ \lambda_\infty(A_n) = \lambda_n(A) \times \Pi_{i= n +1 }^{\infty} \lambda_I(I) = \lambda_n(A).$
  \end{Def}
   \begin{thm} \cite[Theoem 2.7]{TG}
 $ \lambda_{\infty}(.) $ is a measure on $ B[\mathbb{R}_{I}^{n}] $ is equivalent to $n$-dimensional Lebesgue measure on $ \mathbb{R}^n.$
   
 \end{thm}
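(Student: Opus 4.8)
The statement packs two assertions: that $\lambda_\infty$ really is a countably additive measure on the Borel $\s$-algebra $B[\mathbb{R}_I^n]$, and that the resulting measure space is \emph{equivalent} to $(\mathbb{R}^n,B[\mathbb{R}^n],\lambda_n)$ in the sense that the obvious ``slice'' correspondence $A\longmapsto A_n=A\times I_n$ identifies the two. The plan is therefore to exhibit this correspondence as an isomorphism of measure spaces, so that $\lambda_\infty$ and $\lambda_n$ become literally the same measure read through it. I will phrase everything through the coordinate projection $p\colon\mathbb{R}_I^n\to\mathbb{R}^n$, $p(x,\om)=x$, whose set-preimage map is exactly $A\mapsto A\times I_n=A_n$.

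First I would identify $B[\mathbb{R}_I^n]$. By definition the topology on $\mathbb{R}_I^n$ is $D_n=\{\,U\times I_n: U\text{ open in }\mathbb{R}^n\,\}=p^{-1}(\mathcal{O})$, where $\mathcal{O}$ is the usual topology on $\mathbb{R}^n$; one first checks that $D_n$ is genuinely a topology, which amounts to the cylinder-set identities $A_n\cup B_n=(A\cup B)\times I_n$, $A_n\cap B_n=(A\cap B)\times I_n$, $B_n^c=B^c\times I_n$ from the preceding definition (these give $(U\times I_n)\cap(V\times I_n)=(U\cap V)\times I_n$ and $\bu_\al(U_\al\times I_n)=(\bu_\al U_\al)\times I_n$). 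Then, by the standard fact that preimage maps commute with generation of $\s$-algebras, $\s(p^{-1}\mathcal{E})=p^{-1}\s(\mathcal{E})$ for any family $\mathcal{E}$, we get
\[
B[\mathbb{R}_I^n]=\s(D_n)=p^{-1}\big(B[\mathbb{R}^n]\big)=\{\,A_n: A\in B[\mathbb{R}^n]\,\}.
\]
Since $I_n\neq\es$ the map $p$ is onto, so $A\mapsto A_n=p^{-1}(A)$ is injective; hence it is a bijection $B[\mathbb{R}^n]\to B[\mathbb{R}_I^n]$ that, being a preimage map, commutes with complements, countable unions and intersections and sends disjoint families to disjoint families.

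Next I would transport the measure along $p$. The definition $\lambda_\infty(A_n)=\lambda_n(A)\cdot\prod_{i=n+1}^{\iy}\lambda_I(I)=\lambda_n(A)$ is unambiguous because $A\mapsto A_n$ is injective (and it uses the normalisation $\lambda_I(I)=1$ underlying the Gill--Myres / Yamasaki product, so the infinite product equals $1$). Consequently $\lambda_\infty(\es)=\lambda_n(\es)=0$, and for countably many pairwise disjoint sets $A_n^{(k)}=p^{-1}(A^{(k)})$ the $A^{(k)}$ are pairwise disjoint in $\mathbb{R}^n$, so
\[
\lambda_\infty\Big(\bu_k A_n^{(k)}\Big)=\lambda_n\Big(\bu_k A^{(k)}\Big)=\sum_k\lambda_n(A^{(k)})=\sum_k\lambda_\infty(A_n^{(k)}),
\]
showing $\lambda_\infty$ is a ($\s$-finite) measure on $B[\mathbb{R}_I^n]$. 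Finally $p_*\lambda_\infty=\lambda_n$ holds by the very definition of $\lambda_\infty$, and since $p$ induces a $\s$-algebra isomorphism this makes $p$ a measure-space isomorphism $(\mathbb{R}_I^n,B[\mathbb{R}_I^n],\lambda_\infty)\cong(\mathbb{R}^n,B[\mathbb{R}^n],\lambda_n)$, that is, $\lambda_\infty$ is equivalent to $n$-dimensional Lebesgue measure.

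The argument is light on technique; the only genuinely substantive points are foundational, and I expect the mild obstacle to lie there: one must be sure that $D_n$ is a bona fide topology (so that ``Borel subsets of $\mathbb{R}_I^n$'' is well posed) and that the infinite product $\prod_{i=n+1}^{\iy}\lambda_I(I)$ converges to $1$ --- equivalently, that the ambient Gill--Myres measure on $\R^{\iy}$ restricts on $n$-th order box sets to exactly $\lambda_n$. Granting these, the theorem is just the formal transfer of all measure-theoretic data through the projection $p$, and no estimate or limiting argument is needed.
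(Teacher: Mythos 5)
Your argument is correct, and there is nothing in the paper to compare it against: the statement is imported verbatim from Gill--Zachary \cite[Theorem 2.7]{TG} and no proof is given here. Your transport-of-structure argument is the natural (and presumably intended) one --- since $D_n$ is exactly the preimage topology under the projection $p(x,\omega)=x$ and $p$ is surjective, the map $A\mapsto A_n=p^{-1}(A)$ is a bijection of Borel $\sigma$-algebras commuting with all set operations, and $\lambda_\infty$ is by definition the transport of $\lambda_n$ along it (using $\lambda_I(I)=1$), so well-definedness, countable additivity, and the measure-space isomorphism all follow formally.
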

\begin{cor}\cite[Corollary 2.8]{TG}
  The measure $ \lambda_{\infty}(.) $ is both translationally and rotationally invariant on $ (\mathbb{R}_{I}^{n}, B[\mathbb{R}_{I}^{n}]) $ for each $ n \in \mathbb{N}.$
\end{cor}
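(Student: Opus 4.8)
The plan is to deduce the invariance on $(\mathbb{R}_I^n, B[\mathbb{R}_I^n])$ directly from the corresponding classical facts on $(\mathbb{R}^n, B[\mathbb{R}^n])$ by transporting everything through the identification supplied by Theorem 2.7. First I would record that, by Definition 1.11, every Borel set of $\mathbb{R}_I^n$ has the form $A_n = A \times I_n$ with $A \in B[\mathbb{R}^n]$, that a linear transformation $T$ of $\mathbb{R}^n$ (in particular a rotation, i.e. $T \in O(n)$) acts by $T_I[A_n] = T[A] \times I_n$, and that a translation by a vector $v \in \mathbb{R}^n$ acts by $(A + v)_n = (A + v) \times I_n$. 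In both cases the image is again a set of the same form with $T[A]$, respectively $A + v$, a Borel subset of $\mathbb{R}^n$, so $T_I$ and the translations map $B[\mathbb{R}_I^n]$ onto itself; this is the small bookkeeping step that has to be checked before invariance can even be stated.

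The second step is the computation itself. By the definition of $\lambda_\infty$ in Definition 1.11 we have $\lambda_\infty((A + v)_n) = \lambda_n(A + v)$ and $\lambda_\infty(T_I[A_n]) = \lambda_n(T[A])$. Now invoke the classical translation invariance of $n$-dimensional Lebesgue measure, $\lambda_n(A + v) = \lambda_n(A)$, and its rotational invariance, $\lambda_n(T[A]) = |\det T|\,\lambda_n(A) = \lambda_n(A)$ for $T \in O(n)$; combined with $\lambda_\infty(A_n) = \lambda_n(A)$ this yields $\lambda_\infty((A + v)_n) = \lambda_\infty(A_n)$ and $\lambda_\infty(T_I[A_n]) = \lambda_\infty(A_n)$ for every $A \in B[\mathbb{R}^n]$, which is exactly the assertion. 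Theorem 2.7 is what guarantees that the object $\lambda_\infty$ being manipulated really is a well-defined, countably additive measure on $B[\mathbb{R}_I^n]$ agreeing with $\lambda_n$, so that this chain of equalities is legitimate.

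I do not anticipate any serious obstacle: the content is entirely in Theorem 2.7 together with the fact that the group actions on $\mathbb{R}_I^n$ were \emph{defined} coordinate-wise as the trivial extension of the actions on $\mathbb{R}^n$, acting as the identity on the tail $I_n$. The only point needing a line of care is confirming that ``rotationally invariant'' is meant in the sense of invariance under $O(n)$ acting on the first $n$ coordinates, the tail factor carrying the product $\prod_{i = n+1}^{\infty} \lambda_I(I)$ being left fixed, and that $\lambda_I(I) = 1$ so that this infinite product equals $1$ and does not disturb the equality $\lambda_\infty(A_n) = \lambda_n(A)$. Once these conventions are spelled out the corollary is immediate.
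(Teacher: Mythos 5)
Your argument is correct and is essentially the only available one: the paper states this as a cited corollary of Theorem 1.4 (Theorem 2.7 of Gill--Zachary), and the intended proof is exactly your reduction — every set in $B[\mathbb{R}_I^n]$ has the form $A\times I_n$, the group actions are defined to act on the first factor and fix the tail, and $\lambda_\infty(A\times I_n)=\lambda_n(A)$ transports the classical translation and rotation invariance of $\lambda_n$ verbatim. Your added care about $\lambda_I(I)=1$ and about the actions preserving $B[\mathbb{R}_I^n]$ is appropriate bookkeeping, not a deviation.
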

   Recalling the  theory on $ \mathbb{R}_{I}^{n} $ that completely paralleis that on $ \mathbb{R}^n. $     Since $ \mathbb{R}_{I}^{n} \subset \mathbb{R}_{I}^{n+1},$ we have an increasing sequence, so we define $ \widehat{\mathbb{R}}_{I}^{\infty} = \lim\limits_{ n \to \infty} \mathbb{R}_{I}^{n} = \bigcup\limits_{k=1}^{\infty} \mathbb{R}_{I}^{k}.$ Let $X_1= \widehat{\R}_I^\infty $ and let $\tau_1$ be the topology induced by the class of open sets $D \subset X_1$ such that $D= \bigcup\limits_{n=1}^{\infty}D_n = \bigcup\limits_{n=1}^{\infty}\{U \times I_n : U \mbox{~is~open~ in~}\R^n\}.$ Let $X_2= \R^\infty \setminus \widehat{\R}_I^\infty,$ and let $\tau_2$ be discrete topology on $X_2$ induced by the discrete metric so that, for $x,y \in X_2,~x \neq y,~d_2(x,y)=1$ and for $x=y,~d_2(x,y)=0.$
   \begin{Def}
   \cite[Definition 2.9]{TG} We define $(\R_I^\infty, \tau) $ be the co-product $(X_1, \tau_1) \otimes (X_2, \tau_2)$ of $(X_1, \tau_1) $ and $(X_2, \tau_2),$ so that every open set in $(\R_I^\infty, \tau)$ is the disjoint union of two open sets $G_1 \cup G_2$ with $G_1 $ in $(X_1, \tau_1)$ and $G_2 $ in $(X_2, \tau_2)$. 
   \end{Def}
   It follows that $\R_I^\infty = \R^\infty$ as sets. However, since every point in $X_2$ is open and closed in $\R_I^\infty$ and no point is open and closed in $\R^\infty,$ So, $\R_I^\infty \neq \R^\infty$ as a topological spaces.    In \cite {GM}, Gill and Myres  shown that it  can extend the measure $ \lambda_{\infty}(.) $ to $ \mathbb{R}^\infty.$    
 
Similarly, if $B[\R _I^n ]$ is the Borel $\sigma$-algebra for $\R _I^n ,$ then $B[\R_I^n ] \subset B[\R _I^{n+1}] $ by $$\widehat{B}[\R_I^\infty]  = \lim\limits_{n \to \infty} B[\R_I^n ] =\bigcup\limits_{k=1}^{\infty}B[\R_I^k].$$ Let $B[\R_I^\infty] $ be the smallest $\sigma$-algebra containing $\widehat{B}[\R_I^\infty] \cup P(\R^\infty\setminus \bigcup\limits_{k=1}^{\infty}[\R_I^k]),$ where $P(.)$ is the power set.
It is obvious that the class $B[\R _I^\infty ]$ coincides with the Borel $\sigma$-algebra generated by the $\tau$-topology on $\R_I ^\infty.$
\begin{lem}\cite[Lemma 1.15]{GM}
$\widehat{B}[\R_I^\infty] \subset B[\R_I^\infty]$
\end{lem}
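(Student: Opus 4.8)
My plan is to observe that the inclusion is essentially forced by the definition of $B[\R_I^\infty]$ and then to supply the one genuinely needed ingredient. By construction $B[\R_I^\infty]$ is the smallest $\sigma$-algebra of subsets of $\R^\infty$ containing the family $\mathcal{G}:=\widehat{B}[\R_I^\infty]\cup P\bigl(\R^\infty\setminus\bigcup_{k=1}^{\infty}\R_I^k\bigr)$, so once one checks that $\mathcal{G}$ is a well-posed family of subsets of a common set, $\widehat{B}[\R_I^\infty]\subseteq\mathcal{G}\subseteq B[\R_I^\infty]$ follows by minimality. The lemma is nevertheless worth stating because $\widehat{B}[\R_I^\infty]=\bigcup_{k=1}^{\infty}B[\R_I^k]$ is a priori only an algebra (an increasing countable union of $\sigma$-algebras), so one must pass to the generated $\sigma$-algebra and then confirm that the original Borel sets survive; and for the union even to make sense one first needs $B[\R_I^n]\subseteq B[\R_I^{n+1}]$ for all $n$.

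That inclusion is the only step with real content, and I would prove it by the good-sets principle applied twice. Writing $\R_I^n=\R^n\times I_n=(\R^n\times I)\times I_{n+1}\subseteq\R^{n+1}\times I_{n+1}=\R_I^{n+1}$, first observe that $\{V\subseteq\R^{n+1}:V\times I_{n+1}\in B[\R_I^{n+1}]\}$ is a $\sigma$-algebra and, by the definition of the topology $D_{n+1}$ on $\R_I^{n+1}$, contains every open subset of $\R^{n+1}$, hence all of $B[\R^{n+1}]$. Next, for $U$ open in $\R^n$ the set $U\times I$ is a Borel subset of $\R^{n+1}$ (the product of an open set with the interval $I$), so $U\times I_n=(U\times I)\times I_{n+1}\in B[\R_I^{n+1}]$ by the previous sentence; since $\{A\subseteq\R^n:A\times I_n\in B[\R_I^{n+1}]\}$ is again a $\sigma$-algebra containing every open $U\subseteq\R^n$, it contains $B[\R^n]$. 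Finally, because the open sets of $\R_I^n$ are precisely the sets $U\times I_n$ with $U$ open in $\R^n$ and $A\mapsto A\times I_n$ is a Boolean isomorphism onto its image, one gets $B[\R_I^n]=\{A\times I_n:A\in B[\R^n]\}$, whence $B[\R_I^n]\subseteq B[\R_I^{n+1}]$.

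With this in place the rest is routine: each member of $\widehat{B}[\R_I^\infty]$ is a Borel subset of some $\R_I^k\subseteq\R^\infty$ and each member of $P(\R^\infty\setminus\bigcup_k\R_I^k)$ is a subset of $\R^\infty$, so $\mathcal{G}$ is a legitimate family of subsets of $\R^\infty$, the smallest $\sigma$-algebra over it exists, and $\widehat{B}[\R_I^\infty]\subseteq B[\R_I^\infty]$ is immediate from minimality. The ``main obstacle'' is really only care rather than difficulty: one must run the good-sets argument at the correct level ($\R^{n+1}$ before $\R^n$) and be precise about the identification $B[\R_I^n]=\{A\times I_n:A\in B[\R^n]\}$ — the same identification that underlies the earlier ``obvious'' remark that $B[\R_I^\infty]$ coincides with the Borel $\sigma$-algebra of the coproduct topology $\tau$ on $\R_I^\infty$.
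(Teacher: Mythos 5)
Your proof is correct. The paper itself gives no argument for this lemma (it is quoted from [GM, Lemma~1.15]), and the inclusion is indeed immediate from the minimality clause in the definition of $B[\R_I^\infty]$ as the smallest $\sigma$-algebra containing $\widehat{B}[\R_I^\infty]\cup P(\R^\infty\setminus\bigcup_{k}\R_I^k)$, which is exactly the core of your argument. Your additional good-sets verification that $B[\R_I^n]=\{A\times I_n: A\in B[\R^n]\}\subseteq B[\R_I^{n+1}]$ is also correct (note that complements in $\{A: A\times I_n\in B[\R_I^{n+1}]\}$ must be taken relative to $\R_I^n=(\R^n\times I)\times I_{n+1}$, which is itself in $B[\R_I^{n+1}]$, as you implicitly use); this fills in the nesting $B[\R_I^n]\subset B[\R_I^{n+1}]$ that the paper asserts without proof when forming $\widehat{B}[\R_I^\infty]$ as an increasing union.
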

 \subsection{Measurable function}
  We discuss about measurable function on $\R_I^\infty.$ 
     Let $ x =(x_1,x_2,\dots) \in \mathbb{R}_{I}^{\infty},$  $ I_n = \Pi_{k=n+1}^{\infty}[\frac{-1}{2}, \frac{1}{2}] $ and let $ h_n(\widehat{x})= \chi_{I_n}(\widehat{x}),$ where $\widehat{x} = (x_i)_{i=n+1}^{\infty}.$
     \begin{Def}
     \cite[Definition 2.46]{TG} Let $M^n $ represented the class of measurable functions on $\R^n.$ If $ x \in \R_I^\infty$ and $f^n \in M^n.$ Let $\overline{x}= (x_i)_{i=1}^{n} $ and define an essentially tame measurable function of order $n$ (or $ e_n$-tame) on $\R_I^\infty$ by $$f(x)= f^n(\overline{x}) \otimes h_n(\widehat{x}).$$ We let $M_I^n = \{f(x)~: f(x) = f^n( \overline{x}) \otimes h_n(\widehat{x}),~x \in \R_I^\infty \}$ be the class of all $e_n$-tame functions.
     \end{Def}
     \begin{Def}\cite[Definition 2.47]{TG}\label{def19}
     A function $f: \R_I^\infty \to \R$ is said to be measurable and we write $ f \in M_I$, if there is a sequence $\{f_n \in M_I^n \} $ of $e_n$-tame functions, such that $$\lim\limits_{ n \to \infty} f_n(x) \to f(x)~\lambda_\infty-(a.e.).$$
     \end{Def}
     The existence of functions satisfying above definition is not obvious. So, we have 
     \begin{thm} \cite[Theorem 2.48]{TG}
     (Existence) Suppose that $ f: \R_I^\infty \to (- \infty, \infty)$ and $f^{-1}(A) \in B[\R_I^\infty]$ for all $ A \in B[\R]$ then there exists a family of functions $\{f_n \},~f_n \in M_I^n $ such that $f_n(x) \to f(x) , \lambda_\infty(-a.e.)$
     \end{thm}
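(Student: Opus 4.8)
The plan is to exploit the special structure of the co-product topology $\tau$ to recast the statement as a martingale-convergence problem on a product space, and then use that the functions $h_n(\widehat{x})=\chi_{I_n}(\widehat{x})$ rise to the indicator of $X_1=\widehat{\R}_I^\infty$. Two properties of the construction in \cite{GM} will be used freely: $X_2=\R^\infty\setminus\widehat{\R}_I^\infty$ is $\lambda_\infty$-null (so every $\lambda_\infty$-a.e.\ assertion is an assertion about $X_1$), and $\lambda_\infty$ restricted to $X_1=\bigcup_n\R_I^n$ is $\sigma$-finite (being a countable union of sets $[-R,R]^n\times I_n$ of finite $\lambda_\infty$-measure). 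The point requiring the most care, and what I expect to be the main obstacle, is the structural fact that on $X_1$ every $\tau$-open set is determined by the coordinate projections, so that the trace $B[\R_I^\infty]|_{X_1}$ is exactly the cylinder $\sigma$-algebra $\bigvee_n\mathcal F_n$, where $\mathcal F_n$ is generated by $x\mapsto(x_1,\dots,x_n)$; this is a tedious but elementary verification from the definitions of $\tau_1$ and $B[\R_I^n]$, and once it is in hand the rest is routine measure theory.

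Granting this, I would first reduce to bounded $f$: the truncations $f^{(j)}=(f\vee(-j))\wedge j$ again satisfy the hypothesis and $f^{(j)}\to f$ pointwise since $f$ is finite valued, and a diagonalization (below) recovers the general case. So assume $|f|\le C$. Since $\lambda_\infty|_{X_1}$ is $\sigma$-finite, fix a probability measure $\nu$ on $(X_1,B[\R_I^\infty]|_{X_1})$ equivalent to it; then $\nu$-a.e.\ and $\lambda_\infty$-a.e.\ coincide on $X_1$ and $f\in L^1(\nu)$. Put $q_n:=E_\nu[f\mid\mathcal F_n]$. By L\'evy's upward martingale convergence theorem $q_n\to f$ $\nu$-a.e., and since $q_n$ is $\mathcal F_n$-measurable, the Doob--Dynkin lemma supplies a Borel function $f^n:\R^n\to\R$ with $q_n(x)=f^n(x_1,\dots,x_n)$; in particular $f^n\in M^n$.

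Now set $g_n:=f^n(\overline{x})\otimes h_n(\widehat{x})$, which lies in $M_I^n$ by definition. For $\lambda_\infty$-a.e.\ $x$ we have $f^n(\overline{x})=q_n(x)\to f(x)$, and also $h_n(\widehat{x})=\chi_{\R_I^n}(x)\to1$ because $\chi_{\R_I^n}$ increases to $\chi_{X_1}$ and $X_2$ is null; hence $g_n(x)=f^n(\overline{x})\,h_n(\widehat{x})\to f(x)$, which proves the theorem for bounded $f$. For general $f$, apply this to each $f^{(j)}$ to obtain $e_n$-tame functions $g_n^{(j)}\to f^{(j)}$ $\lambda_\infty$-a.e.\ as $n\to\infty$; since $\nu$ is finite, choose $n_1<n_2<\cdots$ with $n_m\ge m$ and $\nu(\{|g_{n_m}^{(m)}-f^{(m)}|>2^{-m}\})<2^{-m}$, and set $f_m:=g_{n_m}^{(m)}\in M_I^{n_m}$. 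By Borel--Cantelli, for $\lambda_\infty$-a.e.\ $x$ one has $|f_m(x)-f^{(m)}(x)|\le2^{-m}$ for all large $m$, while $f^{(m)}(x)=f(x)$ once $m\ge|f(x)|$; hence $f_m\to f$ $\lambda_\infty$-a.e. Reindexing (place $f_m$ in the slots $n_m\le k<n_{m+1}$ and use $M_I^{n_m}\subseteq M_I^k$, which holds because appending indicator factors $\chi_{[-1/2,1/2]}(x_k)$ turns an $e_{n_m}$-tame function into an $e_k$-tame one) yields the required family $\{f_n\}$ with $f_n\in M_I^n$ and $f_n\to f$ $\lambda_\infty$-a.e.

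In summary: the only genuinely nontrivial step is the identification $B[\R_I^\infty]|_{X_1}=\bigvee_n\mathcal F_n$, which is precisely where the unusual topology $\tau$ is used — it is what makes conditional expectation onto ``the first $n$ coordinates'' available and $\bigvee_n\mathcal F_n$ the full $\sigma$-algebra. The only other thing needing attention is that a.e.\ convergence cannot be diagonalized directly, which is why $\nu$ is taken to be a finite measure, so that a.e.\ convergence along $n$ upgrades to convergence in $\nu$-measure and the Borel--Cantelli step is legitimate.
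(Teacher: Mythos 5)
Your argument is correct, and it is worth noting that the paper itself offers no proof of this statement: it is quoted verbatim from \cite[Theorem~2.48]{TG}, so there is nothing in the present text to compare against line by line. Measured against the treatment in \cite{TG,GM}, your route is genuinely different and, I think, more careful. The standard sketch there leans on the observation that any set of nonzero $\lambda_\infty$-measure is concentrated in $X_1$ (indeed in some $\R_I^N$) and essentially restricts $f$ to $\R_I^n$; but that alone does not produce an $e_n$-tame function, because a $B[\R_I^\infty]$-measurable $f$ restricted to $\R_I^n$ can still depend on all of the tail coordinates (e.g.\ $f(x)=\sum_k 2^{-k}\chi_{[0,1/2]}(x_k)$). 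Your proof confronts exactly this point: the identification of the trace $\sigma$-algebra $B[\R_I^\infty]|_{X_1}$ with the cylinder $\sigma$-algebra $\bigvee_n\mathcal{F}_n$ is correct (cylinder sets are countable unions of sets in $\bigcup_k B[\R_I^k]$, and each $A\times I_k$ is a countable intersection of cylinder sets), and from there L\'evy upward convergence plus Doob--Dynkin legitimately manufactures functions of only the first $n$ coordinates converging a.e.\ to $f$. The remaining ingredients --- $\lambda_\infty$-nullity of $X_2$, $\sigma$-finiteness of $\lambda_\infty$ on $X_1$, the passage to an equivalent probability measure so that the truncation/Borel--Cantelli diagonalization is valid, and the inclusion $M_I^n\subseteq M_I^k$ for $k\ge n$ used in the reindexing --- all check out. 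What your approach buys is a complete proof that works for arbitrary measurable $f$ without assuming it is (a.e.\ equal to) a function of finitely many coordinates; the cost is the martingale machinery, which is heavier than anything invoked in \cite{TG}, though one could replace it by an equivalent approximation-in-measure argument for simple functions over the generating algebra.
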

     \begin{rem}
     Recalling that any set $A,$ of non zero measure is concentrated in $X_1$ that is $\lambda_\infty(A)= \lambda_\infty(A \cap X_1) $ also follows that the essential support of the limit function $f(x)$ in Definition \ref{def19}, i.e. $\{x:~f(x) \neq 0 \}$ is concentrated in $\R_I^N,$ for some $N.$
     \end{rem}
     \subsection{Integration theory on $\R_I^\infty$}
      We discuss a constructive theory of integration on $\R_I^\infty$ using the known properties of integration on $\R_I^n.$ This approach has the advantages that all the theorems for Lebesgue measure apply. Proofs are similar as for the proof on $\R^n.$ Let $L^1[\R_I^n]$ be the class of integrable functions on $\R_I^n.$ Since $L^1[\R_I^n] \subset L^1[\R_I^{n+1}]$, we define $L^1[\widehat{\R}_I^\infty]= \bigcup\limits_{n=1}^{\infty}L^1[\R_I^n].$
      \begin{Def}\cite[Definition 3.13]{GM}
      We say that a measurable function $ f \in L^1[\R_I^\infty],$ if there is a Cauchy-sequence $\{f_n \} \subset L^1[\widehat{\R}_I^\infty]$ with $f_n \in L^1[\R_I^n] $ and $$\lim\limits_{n \to \infty}f_n(x) = f(x),~\lambda_\infty-(a.e.)$$
      \end{Def}
      \begin{thm}
      $L^1[\R_I^\infty] = L^1[\widehat{\R}_I^\infty].$
      \end{thm}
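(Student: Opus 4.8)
The strategy is to prove the two inclusions $L^1[\widehat{\R}_I^\infty]\subseteq L^1[\R_I^\infty]$ and $L^1[\R_I^\infty]\subseteq L^1[\widehat{\R}_I^\infty]$ separately. The first inclusion is immediate from the definitions, while the second carries the real content and will be drawn from the structural Remark following Definition~\ref{def19}: a function measurable on $\R_I^\infty$ has its essential support concentrated in $\R_I^N$ for some finite $N$.

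For $L^1[\widehat{\R}_I^\infty]\subseteq L^1[\R_I^\infty]$: if $f\in L^1[\widehat{\R}_I^\infty]=\bigcup_{k\ge 1}L^1[\R_I^k]$, then $f\in L^1[\R_I^N]$ for some $N$. Since $L^1[\R_I^N]\subset L^1[\R_I^n]$ for $n\ge N$, set $f_n=0$ for $n<N$ and $f_n=f$ for $n\ge N$. This sequence lies in $L^1[\widehat{\R}_I^\infty]$, satisfies $f_n\in L^1[\R_I^n]$ for each $n$, is eventually constant and hence $L^1$-Cauchy, and $f_n(x)\to f(x)$ for every $x$; since moreover $f$ is tame of order $N$ and so measurable on $\R_I^\infty$, we conclude $f\in L^1[\R_I^\infty]$.

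For the reverse inclusion, take $f\in L^1[\R_I^\infty]$ and fix a Cauchy sequence $\{f_n\}\subset L^1[\widehat{\R}_I^\infty]$ with $f_n\in L^1[\R_I^n]$ and $f_n\to f$ $\lambda_\infty$-a.e. Each $L^1[\R_I^n]$ embeds isometrically into the complete space $L^1(\R_I^\infty,\lambda_\infty)$ by extension by $0$ off $\R_I^n$ --- here one uses that the extended Lebesgue measure of Gill and Myres restricts on $\R_I^n$ to the measure identified with $n$-dimensional Lebesgue measure. Hence $\{f_n\}$ converges in $L^1$-norm to some $g\in L^1(\R_I^\infty,\lambda_\infty)$; extracting a subsequence converging $\lambda_\infty$-a.e. and comparing with $f_n\to f$ a.e. gives $f=g$ a.e., so $f\in L^1(\R_I^\infty,\lambda_\infty)$ with $\|f\|_1=\lim_n\|f_n\|_1$. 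Now comes the key step: since the $f_n$ are (essentially) tame and $f$ is their a.e. limit, $f$ is measurable in the sense of Definition~\ref{def19}, so by the Remark its essential support is $\lambda_\infty$-concentrated in $\R_I^N$ for some finite $N$. Replacing $f$ by $f\cdot\chi_{\R_I^N}$ changes it only on a $\lambda_\infty$-null set and exhibits $f$ as an element of $L^1[\R_I^N]\subseteq L^1[\widehat{\R}_I^\infty]$, which finishes the argument.

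The step I expect to be the genuine obstacle is the reduction to a single slice $\R_I^N$. Being an $L^1$-limit of functions each supported on some $\R_I^n$ does not, by itself, force the essential support of $f$ into finitely many coordinates, so one must really exploit the way $\lambda_\infty$ lives on $\R_I^\infty$ --- every set of positive measure being concentrated in $X_1=\widehat{\R}_I^\infty$, together with the way the finitely many unconstrained coordinates of a tame approximant pass to the limit --- as encoded in the Remark following Definition~\ref{def19}. The remaining ingredients, namely the isometric embeddings into $L^1(\R_I^\infty,\lambda_\infty)$, its completeness, and the extraction of an a.e.-convergent subsequence, are standard.
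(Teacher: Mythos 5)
Your proposal is correct and follows essentially the same route as the paper: the easy inclusion is immediate, and the substantive direction rests on the same key fact, namely that the essential support of the limit function is $\lambda_\infty$-concentrated in some finite slice $\R_I^N$ (the paper invokes $\lambda_\infty(A_f)=\lambda_\infty(A_f\cap X_1)$ and concludes $f=g$ a.e.\ for some $g\in L^1[\R_I^{N+1}]$, exactly your reduction to $f\cdot\chi_{\R_I^N}$). The extra detail you supply on the isometric embeddings and $L^1$-completeness is not in the paper's terser argument but does not change the approach, and you correctly identify the concentration-of-support step as the one carrying the real weight.
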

      \begin{proof}
      We know that $ L^1[\R_I^n] \subset L^1[\widehat{\R}_I^\infty]$ for all $n.$ We sufficiently need to prove $L^1[\widehat{\R}_I^\infty]$ is closed. Let $ f $ be a limit point of $L^1[\widehat{\R}_I^\infty]~(f \in L^1[\R_I^\infty]).$ If $f=0$ then the result is obvious. So we consider $f \neq 0.$ If $A_f$ is the support of $f,$ then $\lambda_\infty(A_f)= \lambda_\infty(A_f \cap X_1).$ Thus $A_f \cup X_1 \subset \R_I^N $ for some $N.$ This means that there is a function $g \in L^1[\R_I^{N+1}] $ with $ \lambda_\infty(\{x:~f(x) \neq g(x) \})=0.$ So, $f(x)=g(x)$-a.e. As $L^1[\R_I^n]$ is a set of equivalence classes. So,  $L^1[\R_I^\infty] = L^1[\widehat{\R}_I^\infty].$
      \end{proof}
      \begin{Def}\cite[Definition 3.14]{GM}
      If $f \in L^1[\R_I^\infty],$ we define the integral of $f$ by $$ \int_{\R_I^\infty} f(x) d \lambda_\infty(x)= \lim\limits_{n \to \infty} \int_{\R_I^\infty} f_n(x) d \lambda_\infty(x),$$ where $\{f_n \} \subset L^1[\R_I^\infty] $ is any Cauchy-sequence converging to $f(x)$-a.e.
      \end{Def}
      \begin{thm}\cite[Theorem 3.15]{GM}
      If $f \in L^1[\R_I^\infty] $ then the above integral exists and all theorems that are true for $f \in L^1[\R_I^n],$ also hold for $ f \in L^1[\R_I^\infty].$
      \end{thm}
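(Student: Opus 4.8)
The plan is to push everything down to a single finite stage $\R_I^N$, using the identity $L^1[\R_I^\infty]=L^1[\widehat{\R}_I^\infty]$ just established together with the support-concentration Remark, so that the whole assertion collapses to a family of classical finite-dimensional statements.

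\textbf{Existence and well-definedness of the integral.} Let $\{f_n\}\subset L^1[\widehat{\R}_I^\infty]$ with $f_n\in L^1[\R_I^n]$ be Cauchy and $f_n\to f$ $\lambda_\infty$-a.e. For an $e_n$-tame function $g=g^n\otimes h_n$ one has $\int_{\R_I^\infty}g\,d\lambda_\infty=\int_{\R^n}g^n\,d\lambda_n$ and $\|g\|_{L^1}=\int_{\R^n}|g^n|\,d\lambda_n$ (since $\int_{I_n}h_n=1$), so $g\mapsto\int_{\R_I^\infty}g\,d\lambda_\infty$ is linear with $|\int_{\R_I^\infty}g\,d\lambda_\infty|\le\|g\|_{L^1}$, and this passes to the differences $f_n-f_m$ after embedding them into the common stage $\max(n,m)$. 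Hence $\{\int_{\R_I^\infty}f_n\,d\lambda_\infty\}$ is Cauchy in $\R$ and converges; this is the existence claim. For independence of the approximating sequence, by $L^1[\R_I^\infty]=L^1[\widehat{\R}_I^\infty]$ we may write $f=g$ a.e.\ with $g\in L^1[\R_I^N]$ for some $N$; since a Cauchy-in-$L^1$, a.e.-convergent sequence converges in $L^1$-norm to its a.e.\ limit (completeness of $L^1[\widehat{\R}_I^\infty]$, from the preceding theorem), we get $\|f_n-g\|_{L^1}\to 0$, and therefore $\lim_n\int_{\R_I^\infty}f_n\,d\lambda_\infty=\int_{\R_I^N}g\,d\lambda_\infty$, the ordinary Lebesgue integral on $\R_I^N\cong\R^N$, independently of $\{f_n\}$.

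\textbf{Transfer of the finite-dimensional theorems.} Any finite collection $f^{(1)},\dots,f^{(k)}\in L^1[\R_I^\infty]$ appearing in such a statement can be realised at once inside a single $L^1[\R_I^N]$: each $f^{(j)}$ equals a.e.\ some $g^{(j)}\in L^1[\R_I^{N_j}]$, and with $N=\max_jN_j$ the inclusion chain $L^1[\R_I^{N_j}]\subset\cdots\subset L^1[\R_I^N]$ places all of them in $L^1[\R_I^N]$. For theorems about countable families (monotone convergence, dominated convergence, Fatou) the hypothesis supplies a monotone a.e.\ bound or an integrable dominating $g$; because the essential support of $g$ lies in some $\R_I^N$ and $|f_n|\le g$ forces $f_n=0$ off that support, the whole sequence $\{f_n\}$ and its limit live a.e.\ in $L^1[\R_I^N]$. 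One then invokes the classical theorem on $\R_I^N$ — legitimate since, by the Theorem quoted above, $\lambda_\infty$ restricted to $B[\R_I^N]$ is $N$-dimensional Lebesgue measure and $L^1[\R_I^N]$ is the usual equivalence-class Lebesgue space — and carries the conclusion back through the identity $\int_{\R_I^\infty}(\cdot)\,d\lambda_\infty=\int_{\R_I^N}(\cdot)\,d\lambda_\infty$ of the first step, which respects sums, limits and inequalities.

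\textbf{Main obstacle.} The only genuinely delicate point is not an estimate but the bookkeeping: making precise what ``all theorems true for $L^1[\R_I^n]$'' should mean, and verifying that the reduction to one fixed $N$ never fails, especially for hypotheses that do not explicitly provide a dominating function. Here I would lean on the Remark that every $h\in L^1[\R_I^\infty]$ has essential support in some $\R_I^{N_h}$, so that any countable family produced from finitely many given functions by pointwise limits, truncations or rearrangements still has essential support in a common $\R_I^N$; once this is granted, each finite-dimensional statement is literally the classical one, and its validity on $\R_I^\infty$ follows.
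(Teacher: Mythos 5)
Your proposal is correct, but it does not follow the route the paper (implicitly) takes. The paper offers no proof of this particular statement --- it is quoted from \cite{GM} --- and the closest in-paper argument is the proof of the analogous theorem for $SD^p[\R_I^\infty]$, which runs: uniqueness follows from the Cauchy property, and existence is obtained by first assuming $f\ge 0$, choosing the approximating sequence to be increasing so that the integrals converge monotonically, and then decomposing a general $f$ as $f^+-f^-$. You instead get existence directly from the estimate $\left|\int_{\R_I^\infty} g\,d\lambda_\infty\right|\le \|g\|_{L^1}$ applied to $f_n-f_m$ inside the common stage $\R_I^{\max(n,m)}$, so that $L^1$-Cauchyness of $\{f_n\}$ immediately makes $\{\int f_n\}$ Cauchy in $\R$; and you settle well-definedness by invoking $L^1[\R_I^\infty]=L^1[\widehat{\R}_I^\infty]$ to identify $f$ a.e.\ with some $g\in L^1[\R_I^N]$ and showing $\|f_n-g\|_{L^1}\to 0$. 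Your version buys two things the paper's sketch does not: it avoids the unjustified claim that an increasing approximating sequence ``can always be chosen,'' and it explicitly proves independence of the value of the integral from the choice of Cauchy sequence, which the paper's one-line uniqueness remark conflates with uniqueness of the limit. Your reduction of the ``all theorems transfer'' clause to a single finite stage via the support-concentration remark is as precise as that inherently vague clause permits, and you are right to flag the only real gap --- whether a countable family arising in a hypothesis without a dominating function always lands in a common $\R_I^N$ --- which the paper does not address either.
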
     
     We denote $\mathbb{N}_{0}^{\infty}$ be the set of all multi-index infinite tuples $\alpha=(\alpha_1,\alpha_2,\dots),$ with $\alpha_i\in \mathbb{N}$ and all but a finite number of entries are zero (also see \cite{TG}). 
     \begin{Def}  The Schwartz space $\mathcal{S}[\mathbb{R}_I^\infty] $ is the topological space of functions $f: \mathbb{R}_I^\infty \to \mathbb{C}$ such that $ f \in C^\infty[\mathbb{R}_I^\infty] $ and $x^{\alpha}\partial^{\beta}f(x) \to 0 $ as $|x| \to \infty $ for every pair of multi-indices $ \alpha, \beta \in \mathbb{N}_{0}^{\infty}, $ and $ f \in \mathcal{S}[\mathbb{R}_I^\infty],$ let $$||f||_{\alpha, \beta}= \sup\limits_{\mathbb{R}_I^\infty}|x^{\alpha}\partial^{\beta}f|.$$
     \end{Def}      
      If $ x=(x_1, x_2,\dots) \in \mathbb{R}_I^\infty$ and $\alpha \in \mathbb{N}_{0}^{\infty},~\alpha=(\alpha_1, \alpha_2,\dots),$ we define  $x^{\alpha}=\Pi_{k=1}^{\infty}x_k^{\alpha_k}$ a product of real or complex numbers.\\
     A sequence of functions $\{f_k~: k \in \mathbb{N}\} $ converges to a function $ f $ in $\mathcal{S}[\mathbb{R}_I^\infty] $ if $$||f_n -f||_{\alpha, \beta} \to 0 $$ as $k \to \infty$ for every $ \alpha, \beta \in \mathbb{N}_{0}^{\infty}$.\\
     That is the Schwartz space consists of smooth functions whose derivatives decay at infinity faster than any power. For details on Schwartz space and distribution functions we refer \cite{MA,TG,LG,KB,SY}.
     \begin{thm}
     $\mathcal{S}[\R_I^\infty]$ (respectively $\mathcal{S}^{'}[\R_I^\infty]$) is a Fr\'echet space, which is dense in $\C_0[\R_I^\infty].$
     \end{thm}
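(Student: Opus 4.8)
The plan is to establish the two assertions separately: first that $\mathcal{S}[\R_I^\infty]$ (and its dual $\mathcal{S}'[\R_I^\infty]$) carries a Fr\'echet space structure, and second that it is dense in $\C_0[\R_I^\infty]$. For the Fr\'echet property, I would observe that the family of seminorms $\{\|\cdot\|_{\al,\be} : \al,\be \in \N_0^\infty\}$ is countable, since $\N_0^\infty$ — the multi-indices with only finitely many nonzero entries — is a countable set. Thus the topology on $\mathcal{S}[\R_I^\infty]$ is generated by a countable family of seminorms, hence metrizable via the standard construction $d(f,g) = \sum_{k} 2^{-k} \tfrac{\|f-g\|_{\al_k,\be_k}}{1+\|f-g\|_{\al_k,\be_k}}$ after enumerating the pairs. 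It remains to verify completeness: given a $d$-Cauchy sequence $\{f_m\}$, each $\|\cdot\|_{\al,\be}$-Cauchy sequence converges, and by the remark following Definition~\ref{def19} every $f \in \mathcal{S}[\R_I^\infty]$ has essential support concentrated in some $\R_I^N$, so one can reduce to the classical completeness of $\mathcal{S}[\R^N]$ on finite-dimensional slices and patch the limits together; the limit function inherits $C^\infty$-smoothness and the rapid decay from uniform control of all seminorms. The dual $\mathcal{S}'[\R_I^\infty]$ with its strong (or weak-$*$) topology is then a Fr\'echet space by the analogous finite-dimensional reduction, invoking the known structure theory of tempered distributions on $\R^n$ transported to $\R_I^n$ via the parallel construction emphasized earlier in the paper.

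For density in $\C_0[\R_I^\infty]$, I would argue as follows. A function $g \in \C_0[\R_I^\infty]$ vanishes at infinity; using the co-product structure $(\R_I^\infty,\tau) = (X_1,\tau_1)\otimes(X_2,\tau_2)$ and the fact that all nonzero measure lives in $X_1 = \widehat{\R}_I^\infty = \bigcup_k \R_I^k$, one reduces to approximating $g$ restricted to the finite-dimensional pieces $\R_I^n$. On each $\R_I^n \cong \R^n \times I_n$, the classical fact that $\mathcal{S}[\R^n]$ is dense in $\C_0[\R^n]$ (by, e.g., convolution with a mollifier together with multiplication by a smooth cutoff) gives an approximating Schwartz function; tensoring with $h_n(\widehat{x}) = \chi_{I_n}(\widehat x)$ produces an $e_n$-tame candidate, which must then be smoothed in the tail variables so as to land genuinely in $\mathcal{S}[\R_I^\infty]$ rather than merely in $M_I^n$. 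Taking $n \to \infty$ and using that the supports exhaust $X_1$ yields the desired density.

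The main obstacle I anticipate is the completeness verification for $\mathcal{S}[\R_I^\infty]$: unlike the finite-dimensional case, one must ensure that the pointwise limit of a Cauchy sequence of smooth rapidly-decaying functions on the \emph{infinite} product is again $C^\infty$ in every (finitely-supported) multi-index direction and still satisfies $x^\al \partial^\be f \to 0$ for \emph{all} $\al,\be \in \N_0^\infty$ simultaneously — this requires carefully combining the finite-dimensional concentration remark with uniform bounds across the infinitely many seminorms. A secondary technical point is the passage from $e_n$-tame approximants (which are only smooth in the first $n$ coordinates and merely the characteristic function $h_n$ in the tail) to genuine elements of $\mathcal{S}[\R_I^\infty]$; here one would replace $\chi_{I_n}$ by a smooth compactly supported bump that equals $1$ on a slightly smaller box, controlling the error in $\C_0$-norm. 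Once these two points are handled, the Fr\'echet structure and density follow by the systematic ``reduce to $\R_I^n$, then use the classical theorem'' strategy that the paper has set up.
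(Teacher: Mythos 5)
The paper itself offers no argument here beyond a citation to Gill--Zachary, so there is nothing to match your proposal against line by line; judged on its own terms, however, your proposal has one genuine gap that cannot be repaired by more care. You claim that $\mathcal{S}^{'}[\R_I^\infty]$, with the strong (or weak-$*$) topology, is a Fr\'echet space ``by the analogous finite-dimensional reduction, invoking the known structure theory of tempered distributions on $\R^n$.'' The known structure theory says the opposite: the strong dual of a non-normable Fr\'echet space is never metrizable, and $\mathcal{S}[\R^n]$ is not normable, so $\mathcal{S}^{'}[\R^n]$ is not Fr\'echet; the weak-$*$ topology is likewise non-metrizable because $\mathcal{S}[\R^n]$ is not of countable algebraic dimension. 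The finite-dimensional model you propose to transport to $\R_I^\infty$ therefore lacks exactly the property you need, and the ``reduce to $\R_I^n$, use the classical theorem'' strategy fails at this step. (This parenthetical assertion is arguably a defect of the theorem statement itself, but a proof attempt must flag it rather than derive it from a classical fact that is false.) The first half of your argument --- countability of $\N_0^\infty$, hence a countable seminorm family, hence metrizability of $\mathcal{S}[\R_I^\infty]$, plus a completeness check --- is the right skeleton for the primal space, although the completeness step leans on the remark after Definition~1.9, which concerns $\lambda_\infty$-a.e.\ limits of $e_n$-tame \emph{measurable} functions and does not by itself show that a topological limit of Schwartz functions has support exhausted by the $\R_I^N$.

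A secondary, but still real, problem is in the density argument: you reduce to $X_1=\widehat{\R}_I^\infty$ on the grounds that ``all nonzero measure lives in $X_1$,'' but density in $\C_0[\R_I^\infty]$ is a uniform-norm statement, not a measure-theoretic one. A function $g\in \C_0[\R_I^\infty]$ may be nonzero at points of the discrete piece $X_2$ (the set $\{|g|\ge\epsilon\}$ meets $X_2$ in a finite set, since compact subsets of a discrete space are finite), and an approximation scheme built entirely from functions concentrated on finite-dimensional slices of $X_1$ says nothing about matching $g$ there. You would need either to show that the $X_2$-part of $g$ can be uniformly approximated separately, or to argue that $\C_0$ of the co-product splits accordingly; a measure-zero dismissal does not suffice. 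The remaining technical point you raise --- smoothing the tail factor $\chi_{I_n}$ --- is actually moot, since $h_n\equiv 1$ on all of $\R_I^n=\R^n\times I_n$, so the tame extension is already constant (hence trivially smooth) in the tail variables; the real unaddressed issue is that the paper never pins down what $C^\infty[\R_I^\infty]$ means, which is where a fully rigorous version of either half of your argument would have to start.
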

     \begin{proof}
     The proof is similar to (p 90 Theorem 2.88 of \cite{TG}).
          \end{proof}
     \begin{Def}
     A tempered distribution $ T $ on $\mathbb{R}_I^\infty$ is a continuous linear functional $ T : \mathcal{S}[\mathbb{R}_I^\infty] \rightarrow \mathbb{C}.$ The topological vector space of tempered distributions is denoted by $\mathcal{S}^{'}[\mathbb{R}_I^\infty]$ or $ \mathcal{S}^{'}.$ If $< T, f > $ denotes the value of $ T \in \mathcal{S}^{'}$ acting on $ f \in \mathcal{S},$ then a sequence $\{T_k\} $ converges to $ T$ in $\mathcal{S}^{'}.$ Written $ T_k \to T $ if $\lim\limits_{k\to \infty} < T_k, f > = < T, f > $ for every $ f \in \mathcal{S}.$
     \end{Def}
     \section*{Purpose of the paper:}
     The purpose
of this paper is to introduce a  class of Banach spaces on $\R_I^\infty$ which contain
the non-absolutely integrable functions, but also contain the Schwartz test
function spaces as dense and continuous embedding.
 \section{The Jones family of spaces $SD^p[\R_I^\infty]~1\leq p \leq \infty$} 
   The theory of distributions is based on the action of linear functional on a space of test function. In \cite{TG}, Gill and Zachary introduced another class of Banach spaces which contain the non-absolutely integrable functions, but also contains the Schwartz test function space as a dense and continuous embedding.
   \begin{lem}
   (Kuelbs lemma) If $\mcB$ is a separable Banach space, there exists a separable Hilbert space $\mcB \subset H $ as continuous dense embedding 
   \end{lem}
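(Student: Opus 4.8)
The plan is to realize $H$ as the completion of $\mcB$ under a Hilbertian norm that is dominated by the given norm; separability enters only to manufacture a countable family of functionals that recovers the norm. First I would use separability to fix a countable dense subset $\{x_k : k\in\N\}$ of the closed unit ball of $\mcB$, and then, by the Hahn--Banach theorem, choose for each $k$ a functional $f_k \in \mcB^{*}$ with $\|f_k\|\le 1$ and $f_k(x_k)=\|x_k\|$. This family is \emph{norming}: $\|f_k\|\le 1$ gives $\sup_k|f_k(x)|\le\|x\|$, while if $\|x\|=1$ and $\|x-x_k\|<\e$ then $f_k(x)=f_k(x_k)+f_k(x-x_k)>\|x_k\|-\e>1-2\e$, so $\sup_k|f_k(x)|=\|x\|$ for every $x\in\mcB$ by homogeneity.

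Next I would define on $\mcB$ the sesquilinear form
\[
\langle x,y\rangle_H \;=\; \sum_{k=1}^{\infty} 2^{-k}\, f_k(x)\,\overline{f_k(y)} .
\]
The series converges absolutely since $|f_k(x)|\le\|x\|$, and $\langle\cdot,\cdot\rangle_H$ is clearly Hermitian and positive semidefinite. It is in fact positive definite: $\langle x,x\rangle_H=0$ forces $f_k(x)=0$ for all $k$, hence $\|x\|=\sup_k|f_k(x)|=0$. Writing $|x|_H=\langle x,x\rangle_H^{1/2}$, the elementary bound $|x|_H^2=\sum_k 2^{-k}|f_k(x)|^2\le\|x\|^2$ shows $|x|_H\le\|x\|$, so the identity map on $\mcB$ is continuous from $(\mcB,\|\cdot\|)$ to $(\mcB,|\cdot|_H)$ with norm at most $1$.

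Finally, let $H$ be the completion of $(\mcB,|\cdot|_H)$; it is a Hilbert space, since the inner product extends by continuity. By construction $\mcB$ is a dense linear subspace of $H$, the inclusion $\mcB\hookrightarrow H$ is continuous, and it is injective precisely because $|\cdot|_H$ is a genuine norm, so $\mcB$ really is a subset of $H$ as claimed. Separability of $H$ is inherited: the rational linear span of $\{x_k\}$ is $\|\cdot\|$-dense in $\mcB$, hence $|\cdot|_H$-dense in $\mcB$, hence dense in $H$. The only step carrying genuine content is the construction of the norming sequence, which is exactly where separability is indispensable; everything after that is routine verification. (One could equally start from a countable weak-$*$ dense subset of the dual unit ball in place of Hahn--Banach, but the route above is the most direct.)
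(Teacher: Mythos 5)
Your proposal is correct and follows essentially the same route as the paper: a countable dense subset of the unit ball, associated norming functionals (the paper calls them duality mappings, you obtain them via Hahn--Banach), the weighted inner product $\sum_k 2^{-k} f_k(x)\overline{f_k(y)}$, and completion. You are in fact more careful than the paper on two points it leaves implicit --- positive definiteness of the form (hence injectivity of the embedding) and separability of $H$.
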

   \begin{proof}
Let $\{e_k\}$ be a countable dense sequence on the unit ball $B$ and let $\{e_{k}^{*}\}$ be any fixed set of corresponding duality mappings (i.e for each $k, e_{k}^{*} \in \mcB^*$ and $e_k^*(e_k)= < e_k, e_k^*> = ||e_k||_{\mcB}^{2}= ||e_k^*||_{\mcB^*}^{2}=1.)$ For each $k, $ let $t_k= \frac{1}{2^k}$ and define $(u,v) $ as follows: $$(u,v)= \sum_{k=1}^{\infty}t_k e_k^*(u) e_{k}^{-*}(v) = \sum_{k=1}^{\infty} \frac{1}{2^k}e_k^*(u) e_{k}^{-*}(v)$$ It is clear that $(u,v) $ is an inner product on $\mcB.$ Let $H$ be the completion of $\mcB$ with respect to this inner product. It is clear that $\mcB$ is dense in $H,$ and 
\begin{align*}
||u||_H^2 &= \sum_{k=1}^{\infty} t_k|e_k^*(u)|^2 \\&\leq \sup|e_k^*(u)|^2 \\&=||u||_\mcB^2
\end{align*}
So, the embedding is continuous. Now note that if $\mcB= L^1[\R^n]$,$$|e_k^*(u)|^2 = |\int_{\R^n} e_k^*(x)u(x)d \lambda_n(x)|^2$$ where $e_k^*(x) \in L^\infty[\R^n].$ 
\end{proof}
It is clear that the Hilbert space H, will contain some non-absolutely integrable
functions, but we cannot say which ones will or will not be in there. This gave Steadman the needed hint for her Hilbert space.
   To construct the space we remembering the remarkable Jone's functions of $3.3.2 $ of \cite{TG} in $C_c^\infty[\R_I^n].$\\
 Fix $n$ and let $\bQ_I^n$ be the set $\{x \in \R_I^n \}$ such that the first co-ordinates $(x_1, x_2,..,x_n)$ are rational. Since this is a countable dense set in $\R_I^n,$ we can arrange it as $\bQ_I^n=\{x_1,x_2,...\}$ for each $k$ and $i$, let $\mcB_k(x_i)$ be the closed cube centered at $x_i$ with edge $e_k= \frac{1}{2^{k-1} \sqrt n},~x \in \N$. Now choose the natural order which maps $\N \times \N$ bijectively to $\N,$ and let $\{\mcB_k:~k \in \N\}$ be the resulting set of (all) closed cubes $$\{\mcB_k(x_i)|~(k,i) \in \N\times\N \}$$ and each $x_i \in \bQ_I^n.$ For each $x \in \mcB_k,~x=(x_1,x_2,..,x_n)$ we define $\mcE_k(x) $ by $\mcE_k(x)= (\mcE_k^i(x_1), \mcE_k^i(x_2),..,\mcE_k^i(x_n)$ with $|\mcE_k(x)| < 1, ~x \in \Pi_{j=1}^{n}I_k^i $ and $\mcE_k(x)=0,~x $ not belongs in  $\Pi_{j=1}^{n}I_k^i $. Then $\mcE_k(x) $ is in $L^p[\R_I^n]^n= L^p[\R_I^n]$ for $ 1 \leq p \leq \infty.$ Define $F_k(.)$ on $L^p[\R_I^n]$ by $$F_k(f)= \int_{\R_I^n}\mcE_k(x)f(x)d \lambda_\infty(x)$$ Since each $\mcB_k$ is a cube with sides parallel to the co-ordinate axes, $F_k(.)$ is well defined integrable functions, is a bounded linear functional on $L^p[\R_I^n]$ for each $k,$ with $||F_k||_{\infty} \leq 1 $ and if $F_k(f)=0 $ for all $k,~f=0$ so that $\{F_k\}$ is a fundamental on $L^p[\R_I^n]$ for $ 1 \leq p \leq \infty.$ Let $t_k>0 $ such that $t_k = \frac{1}{2^k}$ so that $ \sum_{k=1}^{\infty} t_k=1$ and defined an inner product $(.)$ on $L^1[\R_I^n]$ by $$(f,g)= \sum_{k=1}^{\infty}[ \int_{\R_I^n}\mcE_k(x) f(x)d \lambda_\infty(x)][\int_{\R_I^n}\mcE_k(y)g(y) d \lambda_\infty(y)]^c$$ The completion of $L^1[\R_I^n]$ with the above inner product is a Hilbert space which we denote $S{D^2}[\R_I^n].$ 
\begin{rem}
$S{D^2}[\R_I^n]$ will contain some class of non absolute integrable functions. Interested researcher can think for this. If we observe \cite{TG} , we must sure HK-integrable function space contained in $S{D^2}[\R_I^n]$ . we are not interested to find that portion in this paper.
\end{rem}
\begin{thm}
For each $p,~1 \leq p \leq \infty$ we have 
\begin{enumerate}
\item The space $ L^p[\R_I^n] \subset S{D^2}[\R_I^n]$ as a continuous, dense and compact embedding.
\item The space $\mathfrak{M}[\R_I^n] \subset S{D^2}[\R_I^n],$ the space of finitely additive measures on $\R_I^n,$ as a continuous dense and compact embedding.
\end{enumerate}
\end{thm}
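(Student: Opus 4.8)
The plan is to adapt the corresponding finite-dimensional arguments from Gill--Zachary \cite{TG} (their Theorem surrounding the construction of $SD^2[\R^n]$) to the $\R_I^n$-setting, exploiting the fact that the measure $\lambda_\infty$ on $B[\R_I^n]$ is, by Theorem~1.4, just $n$-dimensional Lebesgue measure in disguise, so that $L^p[\R_I^n]$ is isometrically $L^p$ of $\R^n$ and all the classical machinery transfers verbatim. First I would establish the continuous embedding for part (1): for $f\in L^p[\R_I^n]$ one has, directly from the definition of the inner product,
\[
\|f\|_{SD^2}^2=\sum_{k=1}^{\infty}t_k|F_k(f)|^2\le \Big(\sup_k|F_k(f)|^2\Big)\sum_{k=1}^\infty t_k=\sup_k|F_k(f)|^2\le \|f\|_{L^p}^2,
\]
using $\|F_k\|\le 1$ and $\sum t_k=1$; hence the inclusion map is norm-decreasing, in particular continuous. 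Density is automatic because $L^1[\R_I^n]$ is by construction dense in its completion $SD^2[\R_I^n]$, and for $p\neq 1$ one notes $C_c^\infty[\R_I^n]\subset L^p\cap L^1$ is dense in both $L^p[\R_I^n]$ and (through $L^1$) in $SD^2[\R_I^n]$.

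The substantive point is compactness of the embedding. Here the strategy is the standard one: show that the unit ball of $L^p[\R_I^n]$ has image which is totally bounded in the $SD^2$-norm. Given $\e>0$, choose $N$ with $\sum_{k>N}t_k<\e^2/4$; then on the bounded set $\{\|f\|_{L^p}\le 1\}$ the ``tail'' $\sum_{k>N}t_k|F_k(f)|^2<\e^2/4$ uniformly, so it suffices to control the finite-rank map $f\mapsto (F_1(f),\dots,F_N(f))\in\C^N$, whose image of the unit ball is a bounded — hence precompact — subset of $\C^N$. Covering that image by finitely many $\e/2$-balls and pulling back yields a finite $\e$-net for the image of the $L^p$-unit ball in $SD^2[\R_I^n]$. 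Thus the embedding is compact. I expect this total-boundedness argument — specifically, making rigorous that the tail is uniformly small and that the finite-rank part factors through a finite-dimensional space — to be the only step requiring genuine care, though it is routine once the estimate $\|F_k\|\le1$ from the construction is in hand.

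For part (2), the key observation is that a finitely additive measure $\mu\in\mfM[\R_I^n]$ still acts on the bounded functions $\mcE_k$, so $F_k(\mu):=\int_{\R_I^n}\mcE_k\,d\mu$ is well defined with $|F_k(\mu)|\le|\mcE_k|_{\infty}\,\|\mu\|\le\|\mu\|$ (total variation norm), and the same computation as above gives $\|\mu\|_{SD^2}^2=\sum_k t_k|F_k(\mu)|^2\le\|\mu\|^2$, so $\mfM[\R_I^n]$ embeds continuously. Density follows since $L^1[\R_I^n]$ (identified with the absolutely continuous measures, via $f\mapsto f\,d\lambda_\infty$) is a subspace of $\mfM[\R_I^n]$ and is already dense in $SD^2[\R_I^n]$ by construction; hence the larger space $\mfM[\R_I^n]$ is dense a fortiori. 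Compactness is then proved by exactly the same tail-truncation / finite-rank argument as in part (1), now applied to the unit ball of $\mfM[\R_I^n]$, using $|F_k(\mu)|\le\|\mu\|$ in place of $|F_k(f)|\le\|f\|_{L^p}$. The only mild subtlety is ensuring $F_k$ is genuinely a bounded functional on $\mfM[\R_I^n]$ and that $F_k(\mu)=0$ for all $k$ forces $\mu=0$ (so that $\|\cdot\|_{SD^2}$ restricted to $\mfM$ is a norm rather than a seminorm); this follows from the fact that $\{\mcE_k\}$ is total in the relevant sense, i.e. the $\mcB_k$ exhaust a dense family of cubes, so $\int\mcE_k\,d\mu=0$ for all $k$ implies $\mu$ vanishes on all cubes and hence identically.
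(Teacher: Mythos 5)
Your proposal is correct, and for the substantive step it takes a genuinely different (and in fact stronger) route than the paper. For compactness the paper argues sequentially: if $\{f_j\}$ converges weakly in $L^p[\R_I^n]$ to $f$, then since $\mcE_k\in L^q$ each $\int_{\R_I^n}\mcE_k(x)[f_j(x)-f(x)]\,d\lambda_\infty(x)\to 0$, and hence $f_j\to f$ in the $SD^2$-norm; i.e.\ the paper establishes weak-to-strong sequential continuity (this is exactly the sense of ``compact'' the authors use again later, in the form ``weakly compact subsets of $L^p$ are strongly compact in $SD^p$''). You instead prove that the embedding is a compact operator in the standard sense, by truncating the series $\sum_k t_k|F_k(f)|^2$ so that the tail is uniformly small on the unit ball and observing that the head factors through $\C^N$; this exhibits the embedding as a norm-limit of finite-rank maps. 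Your version is stronger and uniform in $p$ (it does not rely on reflexivity of $L^p$, so it covers $p=1,\infty$ where weak-to-strong continuity alone does not give compactness of the operator), while the paper's version is shorter and is literally the property invoked later in the text; note also that the paper's sequential argument as written interchanges the limit in $j$ with the infinite sum over $k$ without comment, a gap your uniform tail estimate repairs. Your continuity and density arguments coincide with the paper's.

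On part (2) the paper's entire proof is the identification $\mfM[\R_I^n]=(L^1[\R_I^n])^{**}\subset SD^2[\R_I^n]$, so your direct argument via $|F_k(\mu)|\le\|\mu\|$ is already more detailed than the source. The one point you flag but do not fully close --- that $F_k(\mu)=0$ for all $k$ forces $\mu=0$ --- is genuinely delicate for \emph{finitely} additive $\mu$: a purely finitely additive functional on $L^\infty$ (e.g.\ of Banach-limit type, concentrated ``at infinity'') can annihilate every compactly supported $\mcE_k$, so on all of $(L^1)^{**}$ the quantity $\|\cdot\|_{SD^2}$ is a priori only a seminorm and your ``vanishes on all cubes, hence identically'' argument is valid only for the countably additive part. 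This defect is inherited from the theorem statement itself and is not addressed by the paper either, so it does not count against your proposal relative to the source, but it is worth recording.
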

\begin{proof}
For $(1)$ by construction, $S{D^2}[\R_I^n] $ contains $L^1[\R_I^n]$ densely. So, we need to show that $L^q[\R_I^n] \subset S{D^2}[\R_I^n]$ for $q \neq 1.$ If $f \in L^q[\R_I^n]$ and $ q < \infty,$ we have, $$
||f||_{S{D^2}} \leq C||f||_q$$
Hence, $f \in S{D^2}[\R_I^n].$ For $q= \infty, $ first note that $$vol(\mcB_k)^2 \leq [\frac{1}{\sqrt n}]^{2n} \leq 1 $$ So, we have $$
||f||_{S{D^2}}  \leq ||f||_\infty$$
Thus $f \in S{D^2}[\R_I^n]$ and $L^\infty[\R_I^n] \subset S{D^2}[\R_I^n].$\\
To prove compactness, suppose $\{f_j\}$ is any weakly convergent sequence in $L^p[\R_I^n],~ 1 \leq p \leq \infty$ with limit $f.$ Since $\mcE_k \in L^q,~\frac{1}{p}+ \frac{1}{q}=1$, $$\int_{\R_I^n}\mcE_k(x)[f_j(x)- f(x)] d \lambda_\infty(x) \to 0$$ for each $k.$ It follows that $\{f_j\}$ converges strongly to $f $ in $S{D^2}[\R_I^n].$\\
To prove $(2)$ as $\mathfrak{M}[\R_I^n] = L^[\R_I^n]^{**} \subset S{D^2}[\R_I^n]$.
\end{proof}
\begin{Def}
We call $S{D^2}[\R_I^n]$ the Jones strong distribution Hilbert space on $\R_I^n.$
\end{Def}
Let $\beta$ be a multi-index of non negative integers $$\beta= ( \beta_1, \beta_2,..,\beta_k)$$ with $|\beta|= \sum_{j=1}^{k}\beta_j.$ If $D$ denotes the standard partial differential operator. Let $D^\beta= D^\beta_1 D^\beta_2...D^\beta_k.$
\begin{thm}
Let $D[\R_I^n] $ be $C_c^\infty[\R_I^n] $ equipped with the standard locally convex topology( test functions)
\begin{enumerate}
\item If $\Phi_j \to \Phi$ in $D[\R_I^n],$ then $\Phi_j \to \Phi$ in the norm topology of $S{D^2}[\R_I^n],$ so that $D[\R_I^n] \subset S{D^2}[\R_I^n]$ as continuous dense embedding.
\item If $T \in D^{'}[\R_I^n]$ then $T \in S{D^2}[\R_I^n]^{'}$ so that $D^{'}[\R_I^n] \subset S{D^2}[\R_I^n]^{'}$ as a continuous dense embedding.
\item For any $f,g \in S{D^2}[\R_I^n]$ and multi-index $\beta,~(D^\beta f, g)_{SD}= (-i)^\beta (f,g)_{SD}.$
\end{enumerate}
\end{thm}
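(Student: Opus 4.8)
The plan is to transport the proof of the corresponding theorem on $\R^n$ (Theorem 3.3.4 and its successors in Gill--Zachary \cite{TG}) to the $\R_I^\infty$ setting, exploiting the fact that the measure space $(\R_I^n,B[\R_I^n],\la_\infty)$ parallels $(\R^n,B[\R^n],\la_n)$ exactly, so the $L^p$-theory and the density of $C_c^\infty$ carry over verbatim. For part (1), suppose $\Phi_j\to\Phi$ in $D[\R_I^n]$; then there is a fixed compact $K\subset\R_I^n$ containing all supports and $\Phi_j\to\Phi$ uniformly on $K$. Since $\|\Phi_j-\Phi\|_{SD^2}^2=\sum_{k=1}^\infty t_k\,|F_k(\Phi_j-\Phi)|^2$ and each $|F_k(\Phi_j-\Phi)|=\bigl|\int_{\R_I^n}\mcE_k(x)(\Phi_j-\Phi)(x)\,d\la_\infty(x)\bigr|\le \|\mcE_k\|_\infty\,\|\Phi_j-\Phi\|_{L^1(K)}\le \la_\infty(K)\,\|\Phi_j-\Phi\|_\infty\to 0$, and since the sum is dominated by $\sum t_k\,\la_\infty(K)^2\|\Phi_j-\Phi\|_\infty^2$ with $\sum t_k=1$, dominated convergence for series gives $\|\Phi_j-\Phi\|_{SD^2}\to 0$. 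Density follows because $C_c^\infty[\R_I^n]$ is already dense in $L^1[\R_I^n]$ (the standard mollification argument works on $\R_I^n$ since integration there mirrors $\R^n$), and $L^1[\R_I^n]$ is dense in $SD^2[\R_I^n]$ by construction of the completion.

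For part (2), since $D[\R_I^n]\hookrightarrow SD^2[\R_I^n]$ is a continuous dense embedding of a Banach space into a Hilbert space, one dualizes: by the Riesz representation theorem $SD^2[\R_I^n]'\cong SD^2[\R_I^n]$, and the adjoint of the dense continuous inclusion $D\hookrightarrow SD^2$ is a continuous injection $SD^2[\R_I^n]'\hookrightarrow D'[\R_I^n]$ with dense range; reading this the other way, every $T$ in the image — and by the chain $D\subset SD^2\cong (SD^2)'\subset D'$ the relevant containment is $D'[\R_I^n]\supset (SD^2)'[\R_I^n]$ — I would instead argue directly: for $T\in D'[\R_I^n]$, restrict the pairing to $D$ and use that $\|\Phi\|_{SD^2}\le \la_\infty(K)\|\Phi\|_\infty$ on any $\Phi$ supported in a fixed $K$ to show $T$ is $SD^2$-bounded on the dense subspace $D$, hence extends to $SD^2[\R_I^n]'$; density of $D'[\R_I^n]$ in $SD^2[\R_I^n]'$ follows by duality from the density of $D$ in $SD^2$. (Here I am following the argument in \cite{TG} that since $C_c^\infty$ is $SD^2$-dense, the finite linear combinations of the $F_k$ — equivalently the Dirac-type functionals — are $SD^2$-dense, which forces the claimed density.)

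For part (3), the computation is the formal adjoint identity for $D^\beta$ transported through the inner product. Write $(D^\beta f,g)_{SD}=\sum_{k=1}^\infty t_k\Bigl(\int_{\R_I^n}\mcE_k(x)\,D^\beta f(x)\,d\la_\infty(x)\Bigr)\overline{\Bigl(\int_{\R_I^n}\mcE_k(y)\,g(y)\,d\la_\infty(y)\Bigr)}$, integrate by parts $|\beta|$ times in the first integral — legitimate because $\mcE_k$ is $C_c^\infty$ and all boundary terms vanish — to move $D^\beta$ onto $\mcE_k$ with the sign $(-1)^{|\beta|}$; then, invoking the defining property of the Jones functions $\mcE_k$ (that $D^\beta\mcE_k$ reproduces, up to the scalar factor recorded in \cite{TG}, the same functional pattern — this is precisely what yields the factor written as $(-i)^\beta$ once the Fourier-type normalization of the $\mcE_k$ is taken into account), reassemble the series. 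The main obstacle, and the only place real care is needed, is part (3): one must check that the particular normalization of the Jones functions $\mcE_k$ used in the construction on $\R_I^n$ produces exactly the factor $(-i)^\beta$ (the statement is really the assertion that $D^\beta$ acts on $SD^2$ the way the Fourier multiplier $(-i\xi)^\beta$ acts, so a multi-index power of $-i$ is the right constant), and that term-by-term integration by parts inside the infinite sum is justified — which it is, by the uniform bound $\|\mcE_k\|_\infty\le 1$ together with $\sum t_k<\infty$ and the fact that $f,g$ (being limits of $L^1$ functions, and the identity needing only to be checked on the dense subspace $C_c^\infty$) have the requisite smoothness there. The rest is routine and parallels \cite{TG} line for line.
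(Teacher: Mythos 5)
Your parts (1) and (3) follow the paper's proof essentially line for line: (1) is the same uniform-convergence-on-a-common-compact-support estimate, summed against $\sum_k t_k=1$, with density obtained from $C_c^\infty[\R_I^n]$ being dense in $L^1[\R_I^n]$ and $L^1[\R_I^n]$ being dense in $S{D^2}[\R_I^n]$ by construction; (3) is the same integration by parts moving $D^\beta$ onto $\mcE_k$, followed by the asserted reproducing property that converts $(-1)^{|\beta|}D^\beta\mcE_k$ into $(-i)^{|\beta|}\mcE_k$. You are right that this last step is exactly where the specific normalization of the Jones functions from the Gill--Zachary construction must be invoked; the paper does no more than assert the identity, so your cautious framing is, if anything, more honest than the source.

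Part (2) is where your argument has a genuine gap. You correctly observe that dualizing the continuous dense embedding $D[\R_I^n]\hookrightarrow S{D^2}[\R_I^n]$ yields an injection $(S{D^2}[\R_I^n])'\hookrightarrow D'[\R_I^n]$, i.e.\ a containment in the \emph{opposite} direction to the one being claimed. Your attempted repair --- using $\|\Phi\|_{S{D^2}}\le\la_\infty(K)\|\Phi\|_\infty$ to conclude that an arbitrary $T\in D'[\R_I^n]$ is $S{D^2}$-bounded on $D[\R_I^n]$ --- cannot work: that inequality says the $S{D^2}$ norm is \emph{weaker} than the sup norm on test functions supported in $K$, so to obtain $|T(\Phi)|\le C\|\Phi\|_{S{D^2}}$ you would need to dominate $T$ by a norm weaker than the one you control, and the inequality gives no handle on that. (A distribution of high order, continuous only with respect to sup norms of many derivatives, is not obviously bounded in the $S{D^2}$ norm, which involves no derivatives of $\Phi$ at all.) The paper's route for (2) is different: it extends $T$ from the dense subspace $D[\R_I^n]$ to $S{D^2}[\R_I^n]$ by (a corollary of) the Hahn--Banach theorem and then identifies the extension with an element $g\in S{D^2}[\R_I^n]$ via the Riesz representation theorem, writing $T\leftrightarrow g$. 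If you want to match the paper you should take that route, though you should note that the Hahn--Banach step itself presupposes exactly the continuity of $T$ with respect to the $S{D^2}$-induced topology on $D[\R_I^n]$ that is at issue, so the difficulty you identified is not resolved by the paper either --- it is merely relocated.
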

\begin{proof}
To prove $(1), $ suppose that $\Phi_j \to \Phi $ in $D[\R_I^n].$ By definition there exists a compact set $K \subset \R_I^n$ which is the support of $\Phi_j - \Phi$ and $D^\beta \Phi_j $ converges to $D^\beta \Phi $ uniformly on $K$ for every multi-index $\beta.$ Let $\{\mcE_{K_l}\}$ be the set of all $\mcE_l,$ with support $K_l \subset K.$ If $\beta$ is a multi-index we have
\begin{align*}
\lim\limits_{j \to \infty}||D^\beta \Phi_j - D^\beta \Phi||_{SD} &= \lim\limits_{j \to \infty}\{\sum_{l=1}^{\infty}t_{K_l}|\mcE_{K_l}(x)[D^\beta \Phi_j(x) - D^\beta \Phi(x)] d \lambda_\infty(x)|^2\}^\frac{1}{2} \\&\leq M\lim\limits_{j \to \infty}\sup_{n \in K}|D^\beta \Phi_j(x)- D^\beta \Phi(x)|=0
\end{align*}
Thus , since $\beta$ is arbitrary, we see that , we see that $D[\R_I^n] \subset S{D^2}[\R_I^n]$ as continuous embedding. Since $C_c^\infty[\R_I^n] $ is dense in $L^1[\R_I^n]$, $D[\R_I^n] $ is dense in $S{D^2}[\R_I^n].$\\
To prove $(2)$, we note that as $D[\R_I^n]$ is a dense locally convex subspace of $S{D^2}[\R_I^n],$ by corollary of Hahn-Banach theorem every continuous linear functional,$T $ defined  on $D[\R_I^n],$ can be extended to a continuous linear functional on $S{D^2}[\R_I^n]$. By Riesz representation theorem, every continuous linear functional $T $ defined on $S{D^2}[\R_I^n] $ is of the form $T(f)= (f,g)_{SD} $ for some $g \in S{D^2}[\R_I^n]. $ Thus $T \in S{D^2}[\R_I^n]^{'}$ and by the identification $T \leftrightarrow g $ for each $T $ in $D^{'}[\R_I^n]$ as  continuous dense embedding.\\
For $(3) $ as each $\mcE_k \in C_c^\infty[\R_I^n]$ so that for any $f \in S{D^2}[\R_I^n]$,$$\int_{\R_I^n}\mcE_k(x).D^\beta f(x) d \lambda_\infty(x) = (-1)^|\beta| \int_{\R_I^n} D^\beta \mcE_k(x).f(x) d \lambda_\infty(x)$$ That is $$(-1)^|\beta| \int_{\R_I^n} D^\beta \mcE_k(x).f(x) d \lambda_\infty(x)= (-i)^{|\beta|} \int_{\R_I^n}\mcE_k(x). f(x)d \lambda_\infty(x)$$ Follows, for any $g \in S{D^2}[\R_I^n],~(D^{\beta} f, g)_{SD^2}= (-i)^|\beta|(f,g)_{SD^2}.$ 
\end{proof}
\subsection{The General case,} $S{D^p}[\R_I^n],~ 1 \leq p \leq \infty.$ To construct $S{D^p}[\R_I^n]$ for all $p $ and for $f \in L^p[\R_I^n],$ define:$$||f||_{SD^p[\mathbb{R}_{I}^{n}]} =\left\{\begin{array}{c}\left(\sum_{|\beta| \leq m}\sum\limits_{k=1}^{\infty}t_k\left|\int_{R_{I}^{n}}\mathcal{E}_k(x)D^{\beta}u(x)d\lambda_{\infty}(x)\right|^p\right)^{\frac{1}{p}}, \mbox{~for~} 1\leq p<\infty;\\
 \sum_{|\beta| \leq m} \sup\limits_{k\geq 1}\left|\int_{R^{\infty}_{I}}\mathcal{E}_k(x)D^{\beta}u(x)d\lambda_{\infty}(x)\right|, \mbox{~for~} p=\infty \end{array}\right.$$It is easy to see that $||.||_{SD^p}^{'}$ defines a norm on $L^p[\R_I^n].$ If $S{D^p}[\R_I^n]$ is the completion of $L^p[\R_I^n]$ with respect to this norm, then we have 
 \begin{thm}
 For each $q,~1 \leq q \leq \infty~~~L^q[\R_I^n] \subset S{D^p}[\R_I^n]$ as a dense continous embeddings.
 \end{thm}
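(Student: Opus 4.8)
The plan is to mirror, \emph{mutatis mutandis}, the argument already used for the Hilbert case $SD^2[\R_I^n]$ and for the continuity statements about $SD^2$, the only new ingredient being that the target norm is now an $\ell^p$-type (rather than $\ell^2$-type) combination of the functionals $F_k(u)=\int_{\R_I^n}\mcE_k(x)D^\beta u(x)\,d\lambda_\infty(x)$, $|\beta|\le m$. First I would record the two basic estimates on the building blocks: each $\mcE_k$ is supported on a cube $\mcB_k$ of volume at most $(1/\sqrt n)^n\le 1$ and satisfies $|\mcE_k|<1$, so that $\|\mcE_k\|_{L^q}\le \mathrm{vol}(\mcB_k)^{1/q}\le 1$ for every $q\in[1,\infty]$, and likewise $\|\mcE_k\|_{L^\infty}\le 1$. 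Together with $\sum_k t_k=1$ these give, for fixed $m$, a uniform bound of the form $\|u\|_{SD^p[\R_I^n]}\le C_{m}\,\|u\|_{L^q[\R_I^n]}$ whenever $u$ (and its weak derivatives up to order $m$, which for the purposes of the embedding we treat distributionally as in the $SD^2$ case) lies in $L^q$; here Hölder's inequality is used on each integral $F_k(D^\beta u)$ and then the $\ell^p$ (or sup) norm over $k$ is dominated by the $\ell^\infty$ norm times $(\sum_k t_k)^{1/p}=1$. This is exactly the computation already carried out for $q<\infty$ and $q=\infty$ in the proof of the $SD^2$ theorem, only with the exponent $2$ replaced by $p$, so I would present it compactly and cite that proof.

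The case $q=1$ is handled first and separately, since $SD^p[\R_I^n]$ is by definition the completion of $L^1$... wait---here one must be slightly careful: the displayed norm $\|\cdot\|_{SD^p}$ is defined on $L^p[\R_I^n]$, and $SD^p$ is its completion, so the genuinely "dense'' piece is $L^p$ itself, and for the other exponents $q\ne p$ one shows the continuous inclusion $L^q\hookrightarrow SD^p$ via the above bound and then invokes density of, say, $C_c^\infty[\R_I^n]$ (which sits inside every $L^q$) together with density of $C_c^\infty$ in $L^1[\R_I^n]$ and the chain of inclusions to conclude that the image of each $L^q$ is dense in $SD^p$. Concretely: $C_c^\infty[\R_I^n]\subset L^q[\R_I^n]\cap L^1[\R_I^n]$ for all $q$, $C_c^\infty$ is $\|\cdot\|_{SD^p}$-dense because it is $L^1$-dense and the inclusion $L^1\hookrightarrow SD^p$ is continuous (the $q=1$ instance of the bound), hence $L^q$, containing $C_c^\infty$, is also $SD^p$-dense. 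So the logical skeleton is: (i) prove the norm estimate $\|u\|_{SD^p}\le C_{m,q}\|u\|_{L^q}$ for each $q\in[1,\infty]$ using Hölder and the cube-volume bound; (ii) conclude the inclusion $L^q[\R_I^n]\subseteq SD^p[\R_I^n]$ is continuous; (iii) deduce density from the density of $C_c^\infty[\R_I^n]$ (equivalently of $L^1\cap L^q$) in $SD^p$.

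The step I expect to be the main (really the only) obstacle is making rigorous sense of the derivative terms $D^\beta u$ for $u\in L^q$ with $|\beta|\le m\ge 1$: a generic $L^q$ function is not classically differentiable, so the integrals $\int \mcE_k\,D^\beta u\,d\lambda_\infty$ must be read as $(-1)^{|\beta|}\int (D^\beta\mcE_k)\,u\,d\lambda_\infty$, exactly the integration-by-parts identity established for $SD^2$ using $\mcE_k\in C_c^\infty[\R_I^n]$. Once that reinterpretation is in force, $|F_k(D^\beta u)|=|\int (D^\beta\mcE_k)u\,d\lambda_\infty|\le \|D^\beta\mcE_k\|_{L^{q'}}\|u\|_{L^q}$, and since each $D^\beta\mcE_k$ is again a fixed $C_c^\infty$ function supported in $\mcB_k$, one gets a bound $\|D^\beta\mcE_k\|_{L^{q'}}\le M_{m}$ uniformly in $k$ (the constant absorbing the finitely many derivatives of the explicit Jones bump and the cube volume). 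Summing over the finitely many $\beta$ with $|\beta|\le m$ and taking $\ell^p$ in $k$ against the summable weights $t_k$ then yields step (i). The rest---continuity of the inclusion and the density argument---is then formal, following verbatim the corresponding parts of the $SD^2$ proofs already given, and the infinite-dimensional wrinkles (the decomposition $\R_I^\infty=X_1\sqcup X_2$, concentration of supports in some $\R_I^N$) play no role here since we are still working on a fixed $\R_I^n$.
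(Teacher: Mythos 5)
Your proposal follows the paper's own route: note that $L^p[\R_I^n]$ is dense in $SD^p[\R_I^n]$ by construction of the completion, then establish $\|f\|_{SD^p}\le C\|f\|_{L^q}$ via H\"older together with the bounds $|\mcE_k|<1$ and $\mathrm{vol}(\mcB_k)\le 1$, treating $q<\infty$ and $q=\infty$ separately; in fact you are more careful than the paper, whose proof omits both the density argument for $q\ne p$ and the $D^\beta$ terms from its displayed computation. The one claim that needs repair is the asserted uniform bound $\|D^\beta\mcE_k\|_{L^{q'}}\le M_m$: since $\mcB_k$ has edge $2^{-(k-1)}/\sqrt n\to 0$, a generic smooth bump with $|\mcE_k|<1$ necessarily has $\|D^\beta\mcE_k\|_{\infty}\gtrsim 2^{k|\beta|}$, so uniformity in $k$ fails for bump functions; it holds only because the Jones functions $\mcE_k$ are complex exponentials restricted to $\mcB_k$, for which $\int_{\R_I^n} D^\beta\mcE_k(x)\,u(x)\,d\lambda_\infty(x)$ equals a unimodular constant times $\int_{\R_I^n}\mcE_k(x)\,u(x)\,d\lambda_\infty(x)$ (the identity the paper records as $(D^\beta f,g)_{SD}=(-i)^{|\beta|}(f,g)_{SD}$), which reduces every derivative term to the $\beta=0$ term. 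With that substitution in place of the generic derivative estimate, your steps (i)--(iii) go through and coincide with the paper's argument.
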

 \begin{proof}
 As $S{D^p}[\R_I^n] $ contains $L^p[\R_I^n] $ densely, so we have to only show that $  L^q[\R_I^n] \subset S{D^p}[\R_I^n] $ for $q \neq p.$\\
 First, suppose that $p < \infty. $ If $ f \in L^q[\R_I^n]$ and $q < \infty,$ we have: $$
 ||f||_{SD^p} \leq ||f||_q $$
 Hence $ f \in S{D^p}[\R_I^n]$ for $q = \infty,$ we have
 \begin{align*}
 ||f||_{SD^p} &= [\sum_{k=1}^{\infty}t_k|\int_{\R_I^n}\mcE_k(x) f(x) d \lambda_\infty(x)|^p]^\frac{1}{p} \\&\leq [[\sum_{k=1}^{\infty}t_k[vol(\mcE_k)]^p][ess \sup|f|]^p]^\frac{1}{p}\\&\leq M||f||_\infty
 \end{align*}
 Thus $f \in S{D^p}[\R_I^n]$ and $L^q[\R_I^n] \subset S{D^p}[\R_I^n]$. The case $p= \infty$ is obvious.
 \end{proof}
 \begin{thm}
 For $S{D^p}[\R_I^n],~1 \leq p \leq \infty,$ we have 
 \begin{enumerate}
 \item $S{D^p}[\R_I^n]$ is uniformly convex.
 \item If $\frac{1}{p}+ \frac{1}{q}=1 $ then the dual space of $S{D^p}[\R_I^n] $ is $S{D^q}[\R_I^n].$
 \item If $K$ is a weakly compact subset of $L^p[\R_I^n],$ it is a strongly compact subset of $S{D^p}[\R_I^n].$
 \item The space $S{D^\infty}[\R_I^n] \subset S{D^p}[\R_I^n].$
 \end{enumerate}
 \end{thm}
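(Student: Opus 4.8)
The plan is to handle all four assertions through one device: an isometric identification of $SD^p[\R_I^n]$ with a closed subspace of a weighted sequence space. Write $\mathbb{I}=\{(k,\beta):k\ge 1,\ |\beta|\le m\}$, a countable index set, and for $u\in L^p[\R_I^n]$ put $F_k^\beta(u)=\int_{\R_I^n}\mcE_k(x)D^\beta u(x)\,d\lambda_\infty(x)$ (interpreting $D^\beta$ distributionally, so that $F_k^\beta(u)=(-1)^{|\beta|}\int_{\R_I^n}D^\beta\mcE_k(x)u(x)\,d\lambda_\infty(x)$). By the very definition of $\|\cdot\|_{SD^p}$, the map $J_p(u)=(t_k^{1/p}F_k^\beta(u))_{(k,\beta)\in\mathbb{I}}$ is a linear isometry of $(L^p[\R_I^n],\|\cdot\|_{SD^p})$ into $\ell^p(\mathbb{I})$ for $1\le p<\infty$, and into a finite $\ell^1$-direct sum of copies of $\ell^\infty(\N)$ when $p=\infty$. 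Passing to completions, $SD^p[\R_I^n]$ is isometrically isomorphic to the closed subspace $X_p:=\overline{J_p(L^p[\R_I^n])}$ of that sequence space, and I would phrase the proof of each item in terms of $X_p$.

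For (1), when $1<p<\infty$ Clarkson's inequalities give that $\ell^p(\mathbb{I})$ is uniformly convex, and since the modulus of convexity of a subspace is at least that of the ambient space, $SD^p[\R_I^n]\cong X_p$ is uniformly convex; the endpoint values $p=1$ and $p=\infty$ I would treat in the restricted sense of the corresponding statements in \cite{TG}. For (4), if $u\in L^\infty[\R_I^n]$ then, since $\sum_k t_k=1$, one has $\sum_k t_k|F_k^\beta(u)|^p\le(\sup_k|F_k^\beta(u)|)^p$ for each $\beta$, whence $\|u\|_{SD^p}^p=\sum_{|\beta|\le m}\sum_k t_k|F_k^\beta(u)|^p\le\sum_{|\beta|\le m}(\sup_k|F_k^\beta(u)|)^p\le(\sum_{|\beta|\le m}\sup_k|F_k^\beta(u)|)^p=\|u\|_{SD^\infty}^p$, using $p\ge 1$ in the last inequality; together with the inclusion $L^\infty[\R_I^n]\subset SD^p[\R_I^n]$ already established and the injectivity of the coordinate maps $J_p,J_\infty$, this yields the continuous embedding $SD^\infty[\R_I^n]\subset SD^p[\R_I^n]$.

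For (3), let $K\subset L^p[\R_I^n]$ be weakly compact; it is then weakly sequentially compact and norm-bounded, say $\|u\|_p\le M$ on $K$. For any sequence $(u_j)\subset K$ with $u_j\rightharpoonup u$ in $L^p[\R_I^n]$, the facts that $D^\beta\mcE_k\in C_c^\infty[\R_I^n]\subset L^q[\R_I^n]$ and that $u_j\rightharpoonup u$ give $F_k^\beta(u_j)=(-1)^{|\beta|}\int_{\R_I^n}D^\beta\mcE_k\,u_j\,d\lambda_\infty\to F_k^\beta(u)$ for every $(k,\beta)$, i.e. $J_p(u_j)\to J_p(u)$ coordinatewise. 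Since $\sum_{(k,\beta)}t_k\|D^\beta\mcE_k\|_q^p<\infty$ by the construction of the $\mcE_k$, the bound $t_k|F_k^\beta(u_j)|^p\le M^p\,t_k\|D^\beta\mcE_k\|_q^p$ shows that the $\ell^p$-tails of $J_p(u_j)$ are small uniformly in $j$; coordinatewise convergence then upgrades to norm convergence in $\ell^p(\mathbb{I})\cong SD^p[\R_I^n]$. Hence every sequence in $K$ has a subsequence converging in $\|\cdot\|_{SD^p}$, so $K$ is strongly compact in $SD^p[\R_I^n]$.

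Part (2) is the step I expect to cost the most work. The natural candidate is the map $g\mapsto\langle\cdot,g\rangle_{SD}$, where $\langle u,g\rangle_{SD}=\sum_{|\beta|\le m}\sum_k t_k F_k^\beta(u)\overline{F_k^\beta(g)}$: because $\frac{1}{p}+\frac{1}{q}=1$ the weights recombine and this is precisely the canonical $\ell^p$--$\ell^q$ pairing carried over to $X_p\times X_q$ by $J_p$ and $J_q$, so Hölder's inequality produces a norm-nonincreasing map $SD^q[\R_I^n]\to SD^p[\R_I^n]^*$. For surjectivity, a functional $\phi\in SD^p[\R_I^n]^*$ is a functional on $X_p\subset\ell^p(\mathbb{I})$, which by Hahn--Banach extends to $\ell^p(\mathbb{I})$ and is therefore represented by some $\psi\in\ell^q(\mathbb{I})=\ell^p(\mathbb{I})^*$. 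The hard part, and the main obstacle, is to show that the coset $\psi+X_p^{\perp}$ always has a representative lying in $X_q$ and that on $X_q$ the induced quotient norm coincides with the $\ell^q$-norm; equivalently, that the construction is self-dual in the sense that an $\ell^p$-norming functional of any element of $X_p$ may be chosen inside $X_q$. This is exactly where I would have to use the explicit overlapping-cube structure of the functions $\mcE_k$ together with the density of $L^q[\R_I^n]$ in $SD^q[\R_I^n]$, in parallel with the argument of \cite{TG}. Granting it, $g\mapsto\langle\cdot,g\rangle_{SD}$ is an isometric isomorphism $SD^q[\R_I^n]\cong SD^p[\R_I^n]^*$, and for $1<p<\infty$ the reflexivity furnished by (1) makes the identification symmetric in $p$ and $q$.
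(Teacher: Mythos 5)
Your overall strategy is the paper's strategy made explicit: you realize $SD^p[\R_I^n]$ isometrically as the closure $X_p$ of the image of $L^p[\R_I^n]$ under the coordinate map $J_p$ inside $\ell^p(\mathbb{I})$, and then read off (1) from Clarkson's inequalities for $\ell^p$ (exactly the paper's one-line proof of (1)), (4) from $\sum_k t_k=1$ (the same computation the paper performs), and (3) from coordinatewise convergence of $J_p(u_j)$ under weak convergence of $u_j$. In (3) you in fact improve on the paper, which jumps from ``$\int\mcE_k(f_m-f)\,d\lambda_\infty\to0$ for each $k$'' directly to strong convergence; the uniformly small tails you supply are precisely the missing step. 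One caveat: the domination $t_k|F_k^\beta(u_j)|^p\le M^p\,t_k\|D^\beta\mcE_k\|_q^p$ with $\sum_{(k,\beta)}t_k\|D^\beta\mcE_k\|_q^p<\infty$ is clear only for $\beta=0$ (where $\|\mcE_k\|_q\le 1$); the construction only requires $|\mcE_k|<1$ and places no bound on $\|D^\beta\mcE_k\|_\infty$ as the cubes shrink, so for $\beta\neq0$ the summability you invoke ``by the construction'' is not actually guaranteed. Your remark that uniform convexity at $p=1,\infty$ can only hold in some restricted sense is also correct; the statement as literally written for $1\le p\le\infty$ is not provable, and the paper has the same problem silently.

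The genuine gap is part (2), and you have located it exactly. The dual of the closed proper subspace $X_p\subset\ell^p(\mathbb{I})$ is the quotient $\ell^q(\mathbb{I})/X_p^{\perp}$, and the entire content of the assertion $(SD^p)^{*}=SD^q$ is that every coset has a representative in $X_q$ of the same norm; neither you nor the paper proves this. The paper's argument --- exhibiting $L_g$ as a duality map for each $g\in SD^p$ and appealing to reflexivity --- only establishes the norm-compatible injection of $SD^q$ into $(SD^p)^{*}$ (your H\"older direction); it does not give surjectivity, which is where the quotient-versus-subspace issue lives. So your proposal is not weaker than the paper here, only more honest about where the argument is incomplete. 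To close it one would need, beyond reflexivity and the density of $L^q$ in $SD^q$, an argument ruling out functionals on $X_p$ all of whose $\ell^q$-representatives lie outside $X_q$; that step is absent from both treatments.
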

 \begin{proof}
 For $(1)$ proof  follows from a modification of the proof of the Clarkson inequality for $l^p$ norms.\\
 For $(2)$ Let $l_{k}^{p}(g) = ||g||_{SD^p}^{2-p}|\int_{\R_I^n}\mcE_k(x) g(x) d \lambda_\infty(x)|^{p-2}$ and observe that for $p \neq 2,~ 1 < p < \infty$ the linear functional$$L_g(f)= \sum_{k=1}^{\infty}t_k l_{k}^{p}(g) \int_{\R_I^n}\mcE_k(x) g(x) d \lambda_\infty(x) \int_{\R_I^n} \mcE_k(y) f^{*}(y)d \lambda_\infty(y)$$ is a duality map on $S{D^q} $ for each $g \in S{D^p}$ and that $S{D^p} $ is reflexive from $(1).$\\
 For $(3)$ If $\{f_m\}$ is any weakly convergent sequence in $K$ with limit $f,$ then $$\int_{\R_I^n}\mcE_k(x)[f_m(x) -f(x)] d \lambda_\infty(x) \to 0$$ for each $k.$ It follows that $\{f_m\}$ converges strongly to $f$ in $S{D^p}.$\\
 For $(4)$ Let $ f \in S{D^\infty}$ implies $|\int_{\R_I^n} \mcE_k(x) f(x) d \lambda_\infty(x)|$ is uniformly bounded for all $k.$ It follows that $|\int_{\R_I^n}\mcE_k(x) f(x) d \lambda_\infty(x)|^p$ is uniformly bounded for each $p,~1 \leq p < \infty.$ It is now clear from the definition of $S{D^\infty}$ that :$$[\sum_{k=1}^{\infty}t_k|\int_{\R_I^n}\mcE_k(x) f(x) d \lambda_\infty(x)|^p]^\frac{1}{p} \leq ||f||_{SD^\infty} < \infty .$$
 \end{proof}
 \begin{thm}
 For each $p,~1 \leq p \leq \infty, $ the test function $D \subset S{D^p}[\R_I^n]$ as continuous embedding.
 \end{thm}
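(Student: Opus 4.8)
The plan is to mirror the argument already carried out for $p=2$ and for $SD^2[\R_I^n]$, since the defining expression of $\|\cdot\|_{SD^p[\R_I^n]}$ is controlled by exactly the same ingredients (the cutoffs $\mcE_k$, the weights $t_k$ with $\sum_k t_k=1$, and the cube volumes $\mathrm{vol}(\mcB_k)$). Recall that convergence $\Phi_j\to\Phi$ in the test-function topology of $D[\R_I^n]=C_c^\infty[\R_I^n]$ is rigid: there is a single compact $K\subset\R_I^n$ containing $\operatorname{supp}(\Phi_j-\Phi)$ for all $j$, and $D^\beta\Phi_j\to D^\beta\Phi$ uniformly on $K$ for every multi-index $\beta$. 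Since $C_c^\infty[\R_I^n]\subset L^p[\R_I^n]$ for every $p\in[1,\infty]$ and the embedding $L^p[\R_I^n]\subset SD^p[\R_I^n]$ is already established, the inclusion $D[\R_I^n]\hookrightarrow SD^p[\R_I^n]$ is a well-defined linear map; it is injective because $\|\Phi\|_{SD^p[\R_I^n]}=0$ forces $\int_{\R_I^n}\mcE_k(x)\Phi(x)\,d\lambda_\infty(x)=0$ for all $k$ (take $\beta=0$), and $\{F_k\}$ is fundamental on $L^p[\R_I^n]$. So the whole content reduces to the norm estimate below.

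First I would record the elementary pointwise bound: for $\psi\in C_c^\infty[\R_I^n]$ and any multi-index $\beta$,
\[
\left|\int_{\R_I^n}\mcE_k(x)\,D^\beta\psi(x)\,d\lambda_\infty(x)\right|\;\le\;\mathrm{vol}(\mcB_k)\,\|D^\beta\psi\|_\infty\;\le\;\|D^\beta\psi\|_\infty ,
\]
using $|\mcE_k|<1$, $\operatorname{supp}\mcE_k\subset\mcB_k$, and $\mathrm{vol}(\mcB_k)=e_k^{\,n}\le(1/\sqrt n)^{\,n}\le1$. For $1\le p<\infty$, since $t_k>0$ with $\sum_k t_k=1$ and $(\mathrm{vol}(\mcB_k))^p\le1$, summing in $k$ gives $\sum_k t_k\big|\int_{\R_I^n}\mcE_k D^\beta\psi\,d\lambda_\infty\big|^p\le\|D^\beta\psi\|_\infty^p$, and summing over the finitely many $\beta$ with $|\beta|\le m$ yields
\[
\|\psi\|_{SD^p[\R_I^n]}\;\le\;N^{1/p}\max_{|\beta|\le m}\|D^\beta\psi\|_\infty\;\le\;M\sum_{|\beta|\le m}\|D^\beta\psi\|_\infty ,
\]
where $N=N(m,n)$ is the number of such multi-indices and $M=M(m,n)$; for $p=\infty$ the same computation with $\sup_k$ in place of $\sum_k t_k$ gives $\|\psi\|_{SD^\infty[\R_I^n]}\le\sum_{|\beta|\le m}\|D^\beta\psi\|_\infty$.

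Finally I would apply this bound to $\psi=\Phi_j-\Phi$: each $\|D^\beta(\Phi_j-\Phi)\|_\infty$ equals the uniform norm of $D^\beta(\Phi_j-\Phi)$ on $K$ (the functions vanish off $K$) and tends to $0$, so $\|\Phi_j-\Phi\|_{SD^p[\R_I^n]}\to0$; thus $D[\R_I^n]\subset SD^p[\R_I^n]$ as a continuous embedding, exactly in the sense used for $SD^2[\R_I^n]$. For $1\le p<\infty$ one moreover gets denseness from $C_c^\infty[\R_I^n]$ dense in $L^p[\R_I^n]$ and $L^p[\R_I^n]$ dense in $SD^p[\R_I^n]$. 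I do not anticipate a genuine obstacle: the only points requiring care are keeping the constant $M$ independent of $\psi$ and $j$ while bookkeeping the finitely many derivative terms $|\beta|\le m$ in the definition of the norm, and observing that for $p=\infty$ denseness genuinely fails in the $L^\infty$ sense, which is why the statement is phrased as a continuous embedding only.
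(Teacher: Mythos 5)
Your proof is correct and rests on the same key estimate as the paper's: bounding $\bigl|\int_{\R_I^n}\mcE_k(x)D^\beta(\Phi_j-\Phi)(x)\,d\lambda_\infty(x)\bigr|$ by $\mathrm{vol}(\mcB_k)\,\sup_{x\in K}|D^\beta(\Phi_j-\Phi)(x)|\le\sup_{x\in K}|D^\beta(\Phi_j-\Phi)(x)|$ on the common compact support $K$. The only organizational difference is that the paper first reduces to the case $p=\infty$ via the previously established continuous embedding $SD^\infty[\R_I^n]\subset SD^p[\R_I^n]$, whereas you carry out the (equally valid) summation over $k$ with the weights $t_k$ directly for each $p$.
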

 \begin{proof}
 Since $S{D^\infty}[\R_I^\infty]$ is continuously embedded in $S{D^p}[\R_I^n]~1 \leq q < \infty,$ it suffices to prove the result for $S{D^\infty}[\R_I^n]$\\
 Suppose $\Phi_j \to \Phi $ in $D[\R_I^n]$ so that there exists a compact set $K \subset \R_I^n,$ containing the support of $\Phi_j - \Phi$ and $D^\beta \Phi_j $ converges to $D^\beta \Phi$ uniformly on $K$ for every multi-index $\beta.$ Let $L=\{l \in \N: $the support $\mcE_l,$ stp$\{\mcE_l\} \subset K \},$ then 
 \begin{align*}
 \lim\limits_{j \to \infty}||D^\beta \Phi - D^\beta \Phi_j||_{SD} &= \lim\limits_{j \to \infty} \sup_{l \in L}|\int_{\R_I^n}[ D^\beta \Phi(x) - D^\beta \Phi_j(x)] \mcE_l(x) d \lambda_\infty(x)| \\&\leq vol(\B_l) \lim\limits_{j \to \infty} \sup_{ x \in K}|D^\beta \Phi(x) - D^\beta \Phi_j(x)| \\&\leq \lim\limits_{j \to \infty} \sup_{ x \in K}|D^\beta \Phi(x) - D^\beta \Phi_j(x)|=0
 \end{align*}
 It follows that $D[\R_I^n] \subset S{D^p}[\R_I^n]$ as a continuous embedding for $ 1 \leq p \leq \infty.$ Thus by the Hahn-Banach theorem we see that the Schwartz distributions $D^{'}[\R_I^n] \subset (S{D^p}[\R_I^n)^{'}$ for $ 1 \leq p \leq \infty.$
 \end{proof}



\section{The family $S{D^p}[\R_I^\infty]$:} 
 
    We define the space $SD^p[\mathbb{R}_{I}^{\infty}] $ with the help of the space $SD^p[\mathbb{R}_{I}^{n}],$ using the same approach that led to $L^1[\R_I^\infty].$ We see that $SD^p[\mathbb{R}_{I}^{n}] \subset SD^p[\mathbb{R}_{I}^{n+1}]. $  Thus we can define $SD^p[\widehat{\mathbb{R}}_{I}^{\infty}]= \bigcup\limits_{n=1}^{\infty}SD^p[\mathbb{R}_{I}^{n}].$\\
   
   \begin{Def}
   We say that a measurable function $ f \in S{D^p}[\R_I^\infty]$ if there is a cauchy sequence $\{f_n\} \subset S{D^p}[\widehat{\R}_I^\infty] $ with $f_n \in S{D^p}[\R_I^n]$ and $\lim\limits_{n \to \infty} f_n(x)= f(x), ~\lambda_\infty-$(a.e.)
   \end{Def}
   Theorem 1.5 shows that functions in $S{D^p}[\widehat{\R}_I^\infty] $ differ from functions in its closure $S{D^p}[\R_I^\infty],$ by sets of measure zero.
   \begin{thm}
   $S{D^p}[\widehat{\R}_I^\infty] = S{D^p}[\R_I^\infty].$
   \end{thm}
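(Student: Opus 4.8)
The plan is to mimic exactly the proof of Theorem 1.5 ($L^1[\R_I^\infty]=L^1[\widehat{\R}_I^\infty]$), since $S{D^p}[\R_I^\infty]$ is defined to be the closure (in the sense of a.e.-limits of Cauchy sequences) of $S{D^p}[\widehat{\R}_I^\infty]=\bigcup_{n=1}^\infty S{D^p}[\R_I^n]$, and the chain of inclusions $S{D^p}[\R_I^n]\subset S{D^p}[\R_I^{n+1}]$ has already been noted. First I would observe that $S{D^p}[\R_I^n]\subset S{D^p}[\widehat{\R}_I^\infty]$ for every $n$, so one inclusion is trivial, and it remains only to show that $S{D^p}[\widehat{\R}_I^\infty]$ is already closed under the relevant limits, i.e. that every $f\in S{D^p}[\R_I^\infty]$ in fact lies in $S{D^p}[\widehat{\R}_I^\infty]$.

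Next I would take such an $f$, dispose of the trivial case $f=0$, and otherwise invoke the structural fact recorded in the Remark after Theorem 1.13: any set $A$ of nonzero $\lambda_\infty$-measure satisfies $\lambda_\infty(A)=\lambda_\infty(A\cap X_1)$, and the essential support $A_f=\{x:f(x)\neq 0\}$ of an a.e.-limit of $e_n$-tame functions is concentrated in $\R_I^N$ for some finite $N$. Hence $A_f\cup X_1$ (or rather the relevant portion of $A_f$) is contained in $\R_I^N$. This is the step where I must be careful: I need to argue that the Cauchy sequence $\{f_n\}\subset S{D^p}[\widehat{\R}_I^\infty]$ defining $f$, together with the finite-dimensional concentration of the support, forces $f$ to agree $\lambda_\infty$-a.e. with a function $g\in S{D^p}[\R_I^{N+1}]$. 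Concretely: since $f_n\in S{D^p}[\R_I^{k_n}]$ and $f_n\to f$ a.e. while $A_f\subset \R_I^N$, the "tame part" of $f$ lives in a fixed $\R_I^{N+1}$, so one can realize $f$ (up to a null set) as an element $g$ of $S{D^p}[\R_I^{N+1}]\subset S{D^p}[\widehat{\R}_I^\infty]$.

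Finally, since $S{D^p}[\R_I^n]$ is a space of equivalence classes of functions modulo $\lambda_\infty$-null sets (just as $L^p$ is), the equality $f=g$ a.e. gives $f=g$ in $S{D^p}[\widehat{\R}_I^\infty]$, proving $S{D^p}[\R_I^\infty]\subseteq S{D^p}[\widehat{\R}_I^\infty]$ and hence the desired equality $S{D^p}[\widehat{\R}_I^\infty]=S{D^p}[\R_I^\infty]$.

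The main obstacle I anticipate is the middle step: justifying that the $S{D^p}$-Cauchy (as opposed to merely a.e.-convergent) structure plays nicely with the finite-dimensional concentration of the support. In the $L^1$ case one simply uses that integrability forces the tail to vanish; here one must check that the $SD^p$-norms, which are built from the functionals $f\mapsto\int_{\R_I^n}\mcE_k(x)D^\beta f(x)\,d\lambda_\infty(x)$, are compatible across dimensions in the same way — that is, that the norm of an $e_N$-tame function computed in $\R_I^m$ for $m>N$ does not depend on $m$, so that the union $S{D^p}[\widehat{\R}_I^\infty]$ is well-defined as a normed space and the limit $f$ inherits a finite $SD^p$-norm. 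Granting the parallel with Theorem 1.5 and the Remark, this should go through verbatim, so I would simply write: "The proof is identical to that of Theorem 1.5, using the Remark following Theorem 1.13 in place of the support argument there."
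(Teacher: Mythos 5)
Your proposal is correct and matches the paper's approach: in fact the paper states this theorem with no proof at all, merely remarking beforehand that Theorem 1.5 (the $L^1$ case) already shows functions in $S{D^p}[\widehat{\R}_I^\infty]$ differ from those in the closure only on null sets, and your argument is exactly the transcription of that proof — one inclusion is trivial, and closedness follows because the support of the a.e.\ limit is concentrated in some $\R_I^N$, so the limit agrees a.e.\ with an element of $S{D^p}[\R_I^{N+1}]$. The subtlety you flag (that elements of the completion $S{D^p}[\R_I^n]$ need not be genuine functions and that the $SD^p$-Cauchy structure must be checked to be compatible across dimensions) is a gap that is equally present in the paper's own proof of Theorem 1.5, so your write-up is, if anything, more careful than the source.
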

   \begin{Def}\label{Def31}
     If $ f \in SD^p[\mathbb{R}_{I}^{\infty}],$ we define the integral of $ f$ by $$ \int\limits_{\mathbb{R}_{I}^{\infty}} f(x)d\lambda_{\infty}(x) = \lim\limits_{n \to \infty}\int\limits_{\mathbb{R}_{I}^{n}} f_n(x)d\lambda_{\infty}(x),$$ where $ f_n \in SD^p[\mathbb{R}_{I}^{n}]$ for all $n$  and the family $\{f_n \}$ is a Cauchy sequence.
     \end{Def}
          \begin{thm}
   If $ f \in SD^p[\mathbb{R}_{I}^{\infty}]$, then the integral of $ f$ defined in Definition \ref{Def31} exists  and all theorems that are true for $f \in S{D^p}[\R_I^n]$
also hold for $f \in S{D^p}[\R_I^\infty].$
\end{thm}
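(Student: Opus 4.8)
The plan is to reduce everything to the finite-dimensional theory on $\R_I^n$, exactly as in the proof of the $L^1$-analogue (Theorem 3.15 of \cite{GM}). The two structural facts I would use are the identification $S{D^p}[\widehat{\R}_I^\infty]=S{D^p}[\R_I^\infty]$ established just above — so that $S{D^p}[\R_I^\infty]=\bigcup_{n=1}^\infty S{D^p}[\R_I^n]$ up to $\lambda_\infty$-null sets — and the observation (the Remark following the Existence theorem) that any set of positive $\lambda_\infty$-measure, in particular the essential support of any $f\in S{D^p}[\R_I^\infty]$, is concentrated in $\R_I^N$ for some finite $N$. Consequently every $f\in S{D^p}[\R_I^\infty]$ coincides $\lambda_\infty$-a.e.\ with an element $g$ of $S{D^p}[\R_I^{N+1}]$, and this is the representative against which everything will be tested.

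First I would show the limit in Definition \ref{Def31} exists. Fix a defining Cauchy sequence $\{f_n\}$, $f_n\in S{D^p}[\R_I^n]$, with $f_n\to f$ $\lambda_\infty$-a.e.; after discarding finitely many terms and altering on null sets we may regard $\{f_n\}_{n\ge N+1}$ as a Cauchy sequence in $S{D^p}[\R_I^{N+1}]$ converging a.e.\ to $g$, so that $\int_{\R_I^n}f_n\,d\lambda_\infty=\int_{\R_I^{N+1}}f_n\,d\lambda_\infty$. Since on the finite level the integral of Definition \ref{Def31} reduces to the ordinary integral on $L^1[\R_I^{N+1}]$ extended by density — hence is continuous for the $S{D^p}$-norm — the numerical sequence $\{\int_{\R_I^{N+1}}f_n\,d\lambda_\infty\}$ is Cauchy in $\C$ and converges. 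For independence of the choice of $\{f_n\}$: if $\{g_n\}$ is a second such sequence, then $\{f_n-g_n\}$ is Cauchy in $S{D^p}[\widehat{\R}_I^\infty]$ and tends to $0$ a.e.; its essential support again lies in a single $\R_I^M$, so the finite-dimensional theory forces $\|f_n-g_n\|_{S{D^p}}\to 0$ and thus $\int_{\R_I^n}(f_n-g_n)\,d\lambda_\infty\to 0$. Hence the integral is well defined and equals $\int_{\R_I^{N+1}}g\,d\lambda_\infty$, and linearity and positivity transfer immediately.

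For the remaining assertion — that every theorem valid on $S{D^p}[\R_I^n]$ (monotone and dominated convergence, Fubini--Tonelli, change of variables, the inclusions $L^q[\R_I^\infty]\subset S{D^p}[\R_I^\infty]$, uniform convexity, the duality $(S{D^p})'=S{D^q}$, and weak-to-strong compactness) continues to hold on $S{D^p}[\R_I^\infty]$ — the same device applies: each function, or each member of a countable family of functions, appearing in the hypothesis is a.e.\ supported in some $\R_I^{N_j}$, and a diagonal argument over these supports produces a common finite level $N$ on which the statement is precisely its already-established $n$-dimensional counterpart. I expect the genuine obstacle to lie exactly here: obtaining a \emph{uniform} finite level $N$ when an infinite family of $S{D^p}[\R_I^\infty]$-functions enters a limit theorem, and confirming that the integral functional is $S{D^p}$-continuous on each $S{D^p}[\R_I^n]$ (so that ``$\{f_n\}$ Cauchy in $S{D^p}$'' may be upgraded to ``$\{\int f_n\}$ converges''). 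Once those two points are secured, the rest is the routine bookkeeping of the $n$-dimensional proofs.
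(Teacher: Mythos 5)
Your overall strategy --- reduce to a finite level $N$ via the support-concentration remark and then invoke norm-continuity of the integral on $S{D^p}[\R_I^{N+1}]$ --- is genuinely different from the paper's, and it founders on exactly the point you flag as ``the genuine obstacle'': the functional $f\mapsto\int_{\R_I^n}f\,d\lambda_\infty$ is \emph{not} continuous for the $S{D^p}$-norm on $L^1[\R_I^n]$, so $S{D^p}$-Cauchyness of $\{f_n\}$ does not make $\{\int f_n\,d\lambda_\infty\}$ Cauchy. Concretely, for any $M$ the cubes $\mcB_k$ with $k\le M$ form a finite family and hence cover a bounded region; if $Q$ is a unit cube disjoint from that region, then $\mcE_k\chi_Q=0$ for $k\le M$ and $|\int\mcE_k\chi_Q\,d\lambda_\infty|\le 1$ for all $k$, so $\|\chi_Q\|_{S{D^p}}^p\le\sum_{k>M}t_k= 2^{-M}$ while $\int\chi_Q\,d\lambda_\infty=1$. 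Thus one can have $\|f_n-g_n\|_{S{D^p}}\to0$ with $\int(f_n-g_n)\,d\lambda_\infty\not\to0$, which breaks both your existence step and your well-definedness step. (Your preliminary reduction --- regarding $\{f_n\}_{n\ge N+1}$ as a Cauchy sequence in $S{D^p}[\R_I^{N+1}]$ --- is also not automatic, since only the limit $f$, not the approximants $f_n$, is known to be essentially supported in a fixed $\R_I^{N}$.)

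The paper's own argument avoids the norm entirely and works with the pointwise a.e.\ convergence built into the definition of membership in $S{D^p}[\R_I^\infty]$: for $f\ge0$ the approximating sequence can be chosen increasing, so $\lim_n\int_{\R_I^n}f_n\,d\lambda_\infty$ exists by monotone convergence; the general case follows from the decomposition $f=f^+-f^-$; and uniqueness is asserted from the Cauchy property of $\{f_n\}$. That proof is itself very terse (it does not verify independence of the chosen sequence in detail, nor address the ``all theorems carry over'' clause at all), but its mechanism --- monotonicity plus a.e.\ convergence rather than norm continuity of the integral --- is the one that can work here. To salvage your write-up you would need to replace ``the integral is $S{D^p}$-continuous'' by a monotone- or dominated-convergence argument exploiting the hypothesis $f_n\to f$ $\lambda_\infty$-a.e.
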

\begin{proof} {\bf{ For existance:}} Since the family of functions $\{f_n\}$ is Cauchy, it is follows that if the integral exists, it is unique.  To prove existence, follow the standard argument and first assume that $f(x) \ge 0$.  In this case, the sequence can always be chosen to be increasing, so that the integral exists.  The general case now follows by the standard decomposition.  
\end{proof}
   
          To construct the space $SD^p[\mathbb{R}_{I}^{\infty}],$ for $1\leq p \leq \infty$   \\Choosing a countable dense set of functions $\{\mcE_n(x)\}_{n=1}^{\infty}$ on the unit ball of $L^1[\R_I^\infty] $ and assume $\{\mcE_n^*\}_{n=1}^{\infty} $ be any corresponding set of duality mapping in $L^\infty[\R_I^\infty]$, also if $\mcB $ is $L^1[\R_I^\infty]$ , using Kuelbs lemma, it is clear that the Hilbert space $H$ will contain some non absolute integrable function, we are not sure this non absolute integrable function is HK-integrable or not. Similar argument of Lemma 2.1 in $L^1[\R_I^\infty]$, with assumption $\mcE_k(x) $ by $\mcE_k(x)= (\mcE_k^i(x_1), \mcE_k^i(x_2),..,\mcE_k^i(x_n)$ with $|\mcE_k(x)| < 1, ~x \in \Pi_{j=1}^{n}I_k^i $ and $\mcE_k(x)=0,~x $ not belongs in  $\Pi_{j=1}^{n}I_k^i $. Then $\mcE_k(x) $ is in $L^p[\R_I^\infty]^n= L^p[\R_I^\infty]$ for $ 1 \leq p \leq \infty.$ Define $F_k(.)$ on $L^p[\R_I^\infty]$  and  Let $t_k= \frac{1}{2^k}$ so that $\sum_{k=1}^{\infty}t_k$ is a set of positive numbers that sum to one, define inner product on $L^1[\R_I^\infty]$ by $$<f, g>= \sum_{k=1}^{\infty}t_k\left[\int_{\R_I^\infty}\mathcal{E}_k (x) f(x)d\lambda_\infty(x)\right]\left[\int_{\R_I^\infty}\mathcal{E}_k(x)g(y)d\lambda_\infty(y)\right]^c.$$ 
    Easily we can find that this inner product  and that $$||f||^2=<f, f> = \sum_{k=1}^{\infty}t_k\left|\int_{\R_I^\infty}\mathcal{E}_k(x)f(x)d\lambda_\infty(x)\right|^2.$$
    We call the completion of $L^1[\R_I^\infty]$ with the above inner product is a Hilbert space, which we denote $S{D^2}[\R_I^\infty]$.    
     \begin{thm}\label{thm21}
    For each $p,~1\leq p \leq \infty,$ we have 
    \begin{enumerate}
    \item The space $ L^p[\R_{I}^{\infty}] \subset SD^2[\R_{I}^{\infty}] $ as a continuous, dense and compact embedding.
    \item $\mathfrak{M}[\R_{I}^{\infty}] \subset SD^2[\R_{I}^{\infty}],~\mathfrak{M}[\R_{I}^{\infty}]$ is the space of finitely additive measures on $\R_{I}^{\infty}$, as a continuous dense and compact embedding.
    \end{enumerate}
    \end{thm}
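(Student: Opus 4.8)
The plan is to obtain both embeddings by transporting the corresponding $\R_I^n$-statements — the version of this theorem on $\R_I^n$ proved above, which gives $L^p[\R_I^n]\ci SD^2[\R_I^n]$ and $\mfM[\R_I^n]\ci SD^2[\R_I^n]$ as continuous, dense, compact embeddings — through the inductive-limit construction, in the same fashion that the identity $L^1[\R_I^\iy]=L^1[\widehat{\R}_I^\iy]$ and its $SD^p$-analogue $SD^p[\widehat{\R}_I^\iy]=SD^p[\R_I^\iy]$ were used to pass from $\R_I^n$ to $\R_I^\iy$. The structural fact I would lean on is that any set of nonzero $\la_\iy$-measure is concentrated in $X_1$, so every $f\in L^p[\R_I^\iy]$ is $\la_\iy$-a.e.\ equal to a function supported in some $\R_I^N$; hence $L^p[\R_I^\iy]=L^p[\widehat{\R}_I^\iy]=\bu_{n\ge1}L^p[\R_I^n]$ and $SD^2[\R_I^\iy]=\bu_{n\ge1}SD^2[\R_I^n]$ as spaces of equivalence classes. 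It then remains to check that the $\R_I^n$-embeddings are mutually compatible on overlaps and carry constants independent of $n$.

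For part (1): given $f\in L^p[\R_I^\iy]$, choose $N$ with $\tx{supp}\,f\ci\R_I^N$ up to a $\la_\iy$-null set; the $\R_I^n$-theorem then gives $f\in SD^2[\R_I^N]\ci SD^2[\R_I^\iy]$, and since the $\mcE_k$ are essentially tame functions assembled from the $\R_I^n$-cubes, the $SD^2[\R_I^\iy]$-norm of $f$ equals its $SD^2[\R_I^N]$-norm. For continuity I would invoke H\"older's inequality: with $1/p+1/q=1$ one has $\bigl|\int_{\R_I^\iy}\mcE_k f\,d\la_\iy\bigr|\le\|\mcE_k\|_q\|f\|_p$, and $\|\mcE_k\|_q\le1$ for every $q\in[1,\iy]$ \emph{uniformly in $n$}, because $|\mcE_k|<1$ and $\tx{vol}(\mcB_k)\le n^{-n/2}\le1$; combining this with $\sum_k t_k=1$ yields
\[
\|f\|_{SD^2}^2=\sum_{k=1}^{\iy}t_k\Bigl|\int_{\R_I^\iy}\mcE_k(x)f(x)\,d\la_\iy(x)\Bigr|^2\le\sup_{k\ge1}\Bigl|\int_{\R_I^\iy}\mcE_k(x)f(x)\,d\la_\iy(x)\Bigr|^2\le\|f\|_p^2 .
\]
Density I would obtain from the chain: $L^1[\R_I^\iy]$ is $SD^2$-dense by construction, and the simple functions of bounded support lie in $L^1\cap L^p$ and are $\|\cdot\|_1$-dense — hence, since $\|\cdot\|_{SD^2}\le\|\cdot\|_1$ on $L^1$, also $SD^2$-dense — in $L^1[\R_I^\iy]$. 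For compactness, if $f_j\rightharpoonup f$ weakly in $L^p[\R_I^\iy]$, then $\int\mcE_k(f_j-f)\,d\la_\iy\to0$ for each $k$ because $\mcE_k\in L^q$, while $t_k\bigl|\int\mcE_k(f_j-f)\bigr|^2\le t_k\sup_j\|f_j-f\|_p^2$ is a $j$-independent summable majorant, so the dominated convergence theorem for series gives $\|f_j-f\|_{SD^2}\to0$.

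For part (2): exactly as on $\R_I^n$, $\mfM[\R_I^\iy]$ is isometrically $(L^\iy[\R_I^\iy])^{*}=(L^1[\R_I^\iy])^{**}$. Since the Hilbert space $SD^2[\R_I^\iy]$ is reflexive, the continuous dense embedding $L^1[\R_I^\iy]\hookrightarrow SD^2[\R_I^\iy]$ established in (1) dualizes twice to a continuous injection $\mfM[\R_I^\iy]=(L^1)^{**}\hookrightarrow(SD^2)^{**}=SD^2[\R_I^\iy]$; density descends from $L^1[\R_I^\iy]\ci\mfM[\R_I^\iy]$, and compactness follows by the same weak-to-strong test against the $\mcE_k$ as in (1), now using Banach--Alaoglu weak$^{*}$ compactness of norm-bounded subsets of $\mfM[\R_I^\iy]$.

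The main obstacle is bookkeeping rather than a new idea: one must confirm that the constant in the continuity estimate and the majorant in the compactness step are genuinely independent of the approximating dimension $n$ — which is precisely where the normalization $e_k=\tf{1}{2^{k-1}\sqrt n}$ of the cube edges, forcing $\tx{vol}(\mcB_k)\le1$ for all $n$, is indispensable — and that the unions $L^p[\R_I^\iy]=\bu_{n\ge1}L^p[\R_I^n]$ and $SD^2[\R_I^\iy]=\bu_{n\ge1}SD^2[\R_I^n]$ are isometric on overlaps, so that the three topological properties transfer verbatim. A secondary point needing care in (2) is replacing sequences by nets in the weak$^{*}$ argument, which can be handled exactly as in the $\R_I^n$ proof.
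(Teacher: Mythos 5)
Your proposal follows essentially the same route as the paper: both parts are obtained by transporting the already-proved $\R_I^n$ embeddings through the inductive-limit structure $SD^2[\R_I^\infty]=\overline{\bigcup_n SD^2[\R_I^n]}$, with part (2) resting on the identification $\mfM=(L^1)^{**}$. You simply make explicit the uniform H\"older bound, the density chain, and the dominated-convergence compactness step that the paper leaves implicit, so no further comparison is needed.
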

    \begin{proof}
      $(1)$ As $L^p[\R_{I}^{n}] \subset S{D^2}[\R_{I}^{n}],$ for each $p,~1\leq p \leq \infty$ as a continuous, dense and compact embedding. 
      However $ SD^2[\R_{I}^{\infty}]$ is the closure of $ \bigcup_{n=1}^{\infty}SD^2[\R_{I}^{n}].$        It follows $SD^2[\R_{I}^{\infty}] $ contains $\bigcup_{n=1}^{\infty}L^p[\R_{I}^{n}]$ which is dense in $L^p[\R_{I}^{\infty}].$ as it's closure.\\
        $(2) $ As $L^1[\R_{I}^{\infty}] \subset S{D^2}[\R_{I}^{\infty}]$ and $\mathfrak{M}[\R_{I}^{n}] =\{L^1[\R_{I}^{n}]\}^{**}.$\\ It gives  $\bigcup_{n=1}^{\infty}\{\mathfrak{M}[\R_{I}^{n}]\} =\bigcup_{n=1}^{\infty}\{L^1[\R_{I}^{n}]\}^{**}. $ Since $ f \in SD^2[\R_{I}^{\infty}] $ is the limit of a sequence $\{f_n\} \subset \bigcup_{n=1}^{\infty}SD^2[\R_{I}^{n}].$         So $\mathfrak{M}[\R_{I}^{\infty}]= \{L^1[\R_{I}^{\infty}]\}^{**} $ and hence $\mathfrak{M}[\R_{I}^{\infty}] \subset SD^2[\R_{I}^{\infty}].$
    \end{proof}
    \begin{Def} We call $ SD^2[\R_{I}^{\infty}] $ the Jones-strong distribution Hilbert space on $\R_{I}^{\infty}.$ Let $\alpha$ be a multi-index of non negative integers $\alpha=(\alpha_1, \alpha_2,\dots)$ with $|\alpha|= \sum_{j=1}^{\infty}\alpha_j.$ If $\mathcal{D}$ denotes the standard partial differential operator, let $\mathcal{D}^{\alpha}=\mathcal{D}^{\alpha_1}\mathcal{D}^{\alpha}_2\dots .$
    \end{Def}
    \subsubsection{ Test function and Distribution in $R_I^\infty$}
     Here our space is $\R_I^\infty.$ We  replace $\R_I^\infty$ with its support in $\R^n $ of \cite{TG}.  Let $\alpha= (\alpha_1, \alpha_2, \alpha_3,\dots)$ be multi-index of non negative integers, with $|\alpha|=\sum\limits_{k=1}^{\infty}\alpha_k.$\\
     We define the operators $D_{\infty}^{\alpha} $ and $D_{\alpha, \infty}$ by $D_{\infty}^{\alpha}= \Pi_{k=1}^{\infty}\frac{\partial^{\alpha^k}}{\delta x^{\alpha_k}} $ and $D_{\alpha,\infty}= \Pi_{k=1}^{\infty}(\frac{1}{2\pi i}\frac{\partial}{ \partial x_k})^{\alpha_k}.$\\
     Let $C_c[\R_I^\infty] $ be the class of infinitely differentiable functions on $R_I^\infty$ with the compact support and impose the natural locally convex topology $ \tau $ on $C_c[\R_I^\infty] $ to obtain $D[\R_I^\infty].$
     \begin{Def}
      A sequence $\{f_m\} $ converges to $ f \in D[\R_I^\infty] $ with respect to the compact sequential limit topology if and only if there exists a compact set $\mathcal{K} \subset \R_I^\infty,$ which contain the support of $f_m \to f $ for each $m $ and $D_{\infty}^{\alpha}f_m \to D_{\infty}^{\alpha}f $ uniformly on $\mathcal{K},$ for every multi-index $ \alpha \in \mathbb{N}_{0}^{\infty}.$
     \end{Def}
      Let $ u \in C^1[\R_I^\infty] $ and suppose that $\phi \in C_{c}^{\infty}[\R_I^\infty] $ has its support in a unit ball $ B_r, r>0.$\\
      Then $$ \int_{\R_I^\infty}(\phi u y_i)d\lambda_\infty = \int_{\partial B_r}(u \phi)vds - \int_{\R_I^\infty}(u \phi_{y_i})d \lambda_\infty,$$ where $  v $ is the unit outward normal to $B_r$. Since $\phi$ vanishes on the $\partial B_r,$ then $$\int_{\R_I^\infty}(\phi u_{y_i})d\lambda_\infty=-\int_{\R_I^\infty}(u\phi_{y_i})d\lambda_\infty, ~1 \leq i \leq \infty.$$\\
      So, in general case, for any $ u \in C^m[\R_I^\infty] $ and any multi-index $ \alpha = (\alpha_1, \alpha_2,\dots),$  with  $ |\alpha|= \sum_{i=1}^{\infty}\alpha_i= m, $
      \begin{equation}\label{eq1}
      \int_{\R_I^\infty} \phi(D^{\alpha}u)d\lambda_\infty = (-1)^m \int_{\R_I^\infty} u(D^{\alpha}\phi)d\lambda_\infty.
      \end{equation}
      \begin{lem}\label{lem24}
      A function $ u \in L_{loc}^{1}[\R_I^\infty] $ if it is Lebesgue integrable on every compact subset of $\R_I^\infty.$
      \end{lem}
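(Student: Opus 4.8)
The statement is really a definition, and what must be checked is that it is consistent with the integration theory on $\R_I^\infty$ built up above; the plan is to reduce everything to the finite-dimensional spaces $\R_I^n$, on which $L^1_{\mathrm{loc}}$ is the classical object because the measure theory on $\R_I^n$ parallels that on $\R^n$ (Theorem 1.3 and Corollary 1.4).

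First I would identify the compact subsets of $\R_I^\infty$. Since $\R_I^\infty$ is the co-product $(X_1,\tau_1)\otimes(X_2,\tau_2)$ with $X_2$ discrete, a compact $K\subset\R_I^\infty$ splits as $K=K_1\cup K_2$ with $K_1\subset X_1=\widehat{\R}_I^\infty$ compact and $K_2\subset X_2$ compact, hence $K_2$ finite. As $\lambda_\infty$ is concentrated on $X_1$ (any set of nonzero measure satisfies $\lambda_\infty(A)=\lambda_\infty(A\cap X_1)$, by the remark following Theorem 1.9), the finite set $K_2$ is $\lambda_\infty$-null and plays no role in integration. Next, a $\tau_1$-compact $K_1\subset\widehat{\R}_I^\infty=\bigcup_{k}\R_I^k$ must sit inside a single layer $\R_I^N$: if $K_1$ met $\R_I^{n_j}\setminus\R_I^{n_j-1}$ along an increasing sequence $n_1<n_2<\cdots$, then choosing $x_j$ in each difference and using that the $\tau_1$-open sets are unions $\bigcup_n\{U\times I_n: U\ \mathrm{open\ in}\ \R^n\}$ produces an open cover of $K_1$ with no finite subcover, contradicting compactness. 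Hence every compact $K\subset\R_I^\infty$ satisfies $\lambda_\infty(K)=\lambda_\infty(K\cap\R_I^N)$ for some $N$.

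Second, on the layer $\R_I^N$ the assertion is classical: a measurable $u$ belongs to $L^1_{\mathrm{loc}}[\R_I^N]$ if and only if $\int_K|u|\,d\lambda_\infty<\infty$ for every compact $K\subset\R_I^N$, because $(\R_I^N,B[\R_I^N],\lambda_\infty)$ is measure-isomorphic to $(\R^N,\mathcal B(\R^N),\lambda_N)$ by Theorem 1.3. Finally, for $u:\R_I^\infty\to\R$ measurable in the sense of Definition \ref{def19}, declaring $u\in L^1_{\mathrm{loc}}[\R_I^\infty]$ exactly when $\int_K|u|\,d\lambda_\infty<\infty$ for all compact $K$ is, by the first two steps, equivalent to requiring $u|_{\R_I^N}\in L^1_{\mathrm{loc}}[\R_I^N]$ for every $N$; this shows the notion is well defined, independent of the approximating layer, and that $L^1_{\mathrm{loc}}[\R_I^\infty]=\bigcup_N L^1_{\mathrm{loc}}[\R_I^N]$ up to sets of measure zero, in line with Theorem 1.5 and Theorem 3.15.

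The only genuinely non-routine step is showing that a $\tau_1$-compact subset of $\widehat{\R}_I^\infty$ is trapped in a single $\R_I^N$; everything else is bookkeeping with the co-product structure together with the finite-dimensional Lebesgue theory already imported for $\R_I^n$.
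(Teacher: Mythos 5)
Your proposal is correct, and while it shares the paper's overall strategy of reducing the statement to the finite-dimensional layers $\R_I^n$, it supplies the substantive content that the paper's own proof omits. The paper's argument is essentially three assertions: the claim holds on each $\R_I^n$, hence on $\bigcup_n \R_I^n$, hence on $\R_I^\infty$ --- with no analysis of what a compact subset of $\R_I^\infty$ actually looks like, which is the only point where the infinite-dimensional setting could cause trouble. You correctly identify and prove that missing step: using the co-product structure, $K=K_1\cup K_2$ with $K_2\subset X_2$ finite and $\lambda_\infty$-null, and a $\tau_1$-compact $K_1\subset\widehat{\R}_I^\infty$ must lie in a single layer $\R_I^N$. (Your cover-without-finite-subcover argument works; even more directly, each $\R_I^k=\R^k\times I_k$ is itself $\tau_1$-open, so $\{\R_I^k\}_{k\ge 1}$ is an increasing open cover of $\widehat{\R}_I^\infty$ and compactness of $K_1$ immediately traps it in some $\R_I^N$.) Your closing identification of $L^1_{\mathrm{loc}}[\R_I^\infty]$ with $\bigcup_N L^1_{\mathrm{loc}}[\R_I^N]$ up to null sets is also consistent with the paper's remark that measurable functions on $\R_I^\infty$ have essential support in some $\R_I^N$, mirroring Theorem 1.11. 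In short: same skeleton as the paper, but yours is the version that actually constitutes a proof; the trade-off is only length.
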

      \begin{proof}
       We know $ u \in L_{loc}^1[\R_I^n] $ if it is Lebesgue integrable on every compact subset of $R_I^n$.\\
       So, $ u \in L_{loc}^{1}[\bigcup\limits_{n=1}^{\infty} \R_I^n] $ if it is Lebesgue integrable on every compact subset of $ \bigcup\limits_{n=1}^{\infty}\R_I^n$.\\
       That is  a function $ u \in L_{loc}^{1}[\R_I^\infty] $ if it is Lebesgue integrable on every compact subset of $\R_I^\infty.$
      \end{proof}
      \begin{rem}
       With the  Lemma(\ref{lem24}), we can conclude the Equation (\ref{eq1}) is fit even if $D^{\alpha}u$ does not exist according to our normal definition.
      \end{rem}
      \begin{Def}
      If $\alpha $ is a multi-index and $ u, v \in L_{loc}^{1}[\R_I^\infty], $ we say that $v$ is the $\alpha^{th}$-weak (or distributional) partial derivative of $u $ and we write $ D^{\alpha}u=v$ provided $$\int_{\R_I^\infty}u(D^{\alpha}\phi)d\lambda_\infty= (-1)^{|\alpha|} \int_{\R_I^\infty} \phi v d\lambda_\infty$$ for all functions $\phi \in C_{c}^{\infty}[\R_I^\infty].$      Thus $ v$ is in the dual space $D^{'}[\R_I^\infty] $ of $D[\R_I^\infty].$
      \end{Def}
      \begin{lem}
       If a weak $\alpha^{th}$-partial derivatives exists for $u,$ then it is unique $\lambda_\infty-$a.e.
      \end{lem}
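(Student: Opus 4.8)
The statement to prove is the uniqueness (up to a $\lambda_\infty$-null set) of the weak $\alpha^{\text{th}}$-partial derivative of a locally integrable function $u$ on $\mathbb{R}_I^\infty$. Here is how I would proceed.

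\textbf{Plan of proof.}
The plan is to follow the classical duality/localization argument, transplanted to $\mathbb{R}_I^\infty$ via the exhaustion $\mathbb{R}_I^\infty = \bigcup_{n=1}^\infty \mathbb{R}_I^n$. Suppose $v_1$ and $v_2 \in L^1_{loc}[\mathbb{R}_I^\infty]$ both serve as the $\alpha^{\text{th}}$ weak partial derivative of $u$. Subtracting the two defining identities
$$\int_{\mathbb{R}_I^\infty} u\,(D^\alpha\phi)\,d\lambda_\infty = (-1)^{|\alpha|}\int_{\mathbb{R}_I^\infty}\phi\, v_i\,d\lambda_\infty,\qquad i=1,2,$$
which hold for all $\phi\in C_c^\infty[\mathbb{R}_I^\infty]$, we obtain
$$\int_{\mathbb{R}_I^\infty}\phi\,(v_1-v_2)\,d\lambda_\infty = 0 \quad\text{for every }\phi\in C_c^\infty[\mathbb{R}_I^\infty].$$
The goal is then to conclude $v_1 - v_2 = 0$ $\lambda_\infty$-a.e.

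\textbf{Key steps.}
First I would reduce to the finite-dimensional situation: by the remark following Definition~\ref{def19} and by Lemma~\ref{lem24}, any compact $\mathcal{K}\subset \mathbb{R}_I^\infty$ carrying nonzero measure is essentially concentrated in some $\mathbb{R}_I^N$, and $w:=v_1-v_2$ restricted to such a set lies in $L^1[\mathbb{R}_I^N]$. Fix $N$ and let $w_N$ be this restriction. The test functions of the form $\phi^N(\overline x)\otimes h_N(\widehat x)$ (in the notation of the $e_N$-tame functions) are available as elements of $C_c^\infty[\mathbb{R}_I^\infty]$, and plugging them into the vanishing integral identity, together with $\int h_N\,d\lambda_I = 1$, gives $\int_{\mathbb{R}_I^N}\phi^N w_N\,d\lambda_\infty = 0$ for all $\phi^N\in C_c^\infty[\mathbb{R}_I^N]$. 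Since the theory on $\mathbb{R}_I^N$ completely parallels that on $\mathbb{R}^N$, the classical fundamental lemma of the calculus of variations (via mollification of $w_N$, or via the density of $C_c^\infty$ in $L^1$ on compacts) yields $w_N = 0$ $\lambda_\infty$-a.e. on $\mathbb{R}_I^N$. Letting $N\to\infty$ along the exhaustion, and using that a countable union of null sets is null, we get $w = 0$ $\lambda_\infty$-a.e. on all of $\mathbb{R}_I^\infty$, hence $v_1 = v_2$ $\lambda_\infty$-a.e.

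\textbf{Main obstacle.}
The routine analytic content (mollification, the du~Bois-Reymond lemma) is entirely standard on each $\mathbb{R}_I^N$. The one point that requires care is the passage between $\mathbb{R}_I^\infty$ and the finite-dimensional slices: one must be sure that restricting a test function on $\mathbb{R}_I^\infty$ to $\mathbb{R}_I^N$ produces enough test functions on $\mathbb{R}_I^N$ to separate $L^1[\mathbb{R}_I^N]$, and that the "tail" $h_N(\widehat x)$ contributes only the harmless factor $1$ to the integrals because of the normalization $\lambda_I(I_N)=1$ established in Definition 1.3 and Theorem 1.4. Once that bookkeeping is in place, the argument closes exactly as in the $\mathbb{R}^n$ case, which is why no separate difficulty arises.
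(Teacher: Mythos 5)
The paper states this lemma with no proof at all, so there is no argument of the authors' to compare yours against; your proof is the standard du~Bois--Reymond/localization argument and it is correct: subtract the two defining identities to get $\int_{\mathbb{R}_I^\infty}\phi\,(v_1-v_2)\,d\lambda_\infty=0$ for all test $\phi$, reduce to each slice $\mathbb{R}_I^N$ using tame test functions and $\lambda_I(I_N)=1$, apply the fundamental lemma of the calculus of variations there, and take a countable union of null sets. The only point to flag is that $\phi^N(\overline{x})\otimes h_N(\widehat{x})$ is smooth only in the first $N$ coordinates (the factor $h_N=\chi_{I_N}$ is a characteristic function), so its membership in $C_c^\infty[\mathbb{R}_I^\infty]$ rests on the paper's own (never fully specified) convention that test functions on $\mathbb{R}_I^\infty$ are built by extending finite-dimensional ones by $h_N$; granting that convention, which the paper uses throughout, your argument closes.
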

    \begin{thm}
    $D[\R_{I}^{\infty}] \subset SD^2[\R_{I}^{\infty}]$ as continuous embedding.
    \end{thm}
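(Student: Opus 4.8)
The plan is to mimic the proof of the finite-dimensional statement (the earlier theorem asserting $D[\R_I^n] \subset S{D^p}[\R_I^n]$ as a continuous embedding, in particular for $p=2$), and then pass to the limit $n \to \infty$ using the now-established identification $S{D^p}[\widehat{\R}_I^\infty] = S{D^p}[\R_I^\infty]$ together with the defining convergence of Cauchy sequences of $e_n$-tame functions. First I would take a sequence $\Phi_j \to \Phi$ in $D[\R_I^\infty]$; by the definition of the compact sequential limit topology there is a single compact set $\mathcal{K} \subset \R_I^\infty$ containing the support of every $\Phi_j - \Phi$, and $D_\infty^\alpha \Phi_j \to D_\infty^\alpha \Phi$ uniformly on $\mathcal{K}$ for every multi-index $\alpha \in \N_0^\infty$. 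By the remark following Definition \ref{def19}, such a compact support is concentrated in $\R_I^N$ for some fixed $N$, so in effect we are working inside $D[\R_I^N] \subset S{D^2}[\R_I^N]$, and the finite-dimensional embedding theorem applies verbatim.

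The key estimate is then the same one used finitely often: writing $L = \{l \in \N : \operatorname{stp}\{\mcE_l\} \subset \mathcal{K}\}$ and using $\sum_k t_k = 1$ with $0 < t_k \le 1$ and $|\mcE_l| < 1$ on a cube of volume at most one, one gets
\begin{align*}
\|\Phi_j - \Phi\|_{SD^2}^2 &= \sum_{l=1}^{\infty} t_l \left| \int_{\R_I^\infty} \mcE_l(x)\,[\Phi_j(x) - \Phi(x)]\, d\lambda_\infty(x) \right|^2 \\
&\le \sum_{l \in L} t_l \, \operatorname{vol}(\mcB_l)^2 \, \sup_{x \in \mathcal{K}} |\Phi_j(x) - \Phi(x)|^2 \le \sup_{x \in \mathcal{K}} |\Phi_j(x) - \Phi(x)|^2 \longrightarrow 0.
\end{align*}
(Only the $l \in L$ terms survive because for $l \notin L$ the integrand vanishes, since $\mcE_l$ and $\Phi_j - \Phi$ have disjoint supports.) This shows $\Phi_j \to \Phi$ in the norm of $S{D^2}[\R_I^\infty]$, which is exactly continuity of the inclusion $D[\R_I^\infty] \hookrightarrow S{D^2}[\R_I^\infty]$; denseness follows because $C_c^\infty[\R_I^\infty]$ is dense in $L^1[\R_I^\infty]$ and $L^1[\R_I^\infty]$ is dense in $S{D^2}[\R_I^\infty]$ by construction (and by Theorem \ref{thm21}(1)).

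The main obstacle — really the only delicate point — is making rigorous the claim that a test function in $D[\R_I^\infty]$, together with the whole convergent sequence $\{\Phi_j\}$, has support concentrated in a fixed $\R_I^N$, so that the relevant integrals over $\R_I^\infty$ reduce to integrals over $\R_I^N$ and the countable family $\{\mcE_l\}$ effectively reduces to the finite-dimensional family. This needs the topological description of $\R_I^\infty$ as the coproduct $(X_1,\tau_1) \oplus (X_2,\tau_2)$: a compact subset of $\R_I^\infty$ meets $X_2$ in only finitely many points (each of which is isolated and carries zero measure) and has its measure-theoretically relevant part inside $\widehat{\R}_I^\infty = \bigcup_k \R_I^k$; compactness then forces it into a single $\R_I^N$. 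Once this reduction is justified, everything else is the routine estimate above, and the passage from $p=2$ back up to general $p$ is not needed here since the statement is only for $SD^2$. I would also remark, as in the finite-dimensional case, that combining this with the Hahn–Banach theorem yields $D'[\R_I^\infty] \subset S{D^2}[\R_I^\infty]'$ as a continuous dense embedding.
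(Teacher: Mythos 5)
Your proposal is correct and follows essentially the same route as the paper: the paper's own proof simply invokes the finite-dimensional embedding $D[\R_I^n]\subset SD^2[\R_I^n]$ and says the infinite-dimensional case follows ``by construction,'' while you supply the details it omits, namely the explicit norm estimate and the justification that a compact set in $\R_I^\infty$ is concentrated in a single $\R_I^N$. One small inaccuracy (inherited from the paper's finite-dimensional argument): restricting to $l$ with $\operatorname{stp}\{\mcE_l\}\subset\mathcal K$ is not what makes the other terms vanish, since a cube can meet $\mathcal K$ without being contained in it; but the bound $\bigl|\int\mcE_l(\Phi_j-\Phi)\,d\lambda_\infty\bigr|\le \sup_{\mathcal K}|\Phi_j-\Phi|$ holds for every $l$ anyway, so the estimate survives.
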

    \begin{proof}
    Since $D[\R^n] \subset SD^2[\R^n] $ as a continuous embedding. So, $D[\R_{I}^{n}] \subset SD^2[\R_{I}^{n}] $ as a continuous embedding. Clearly by construction of $D[\R_{I}^{\infty}]$ and $SD^2[\R_{I}^{\infty}],$ so easily we can show  $D[\R_{I}^{\infty}] \subset SD^2[\R_{I}^{\infty}] $ as a continuous  embedding.
    \end{proof}
    More analytical way we can state the  above theorem as follows:    
    \begin{thm}\label{thm21}
    Let $D[\R_{I}^{\infty}] $ be $C_{c}^{\infty}[\R_{I}^{\infty}]$ equipped with the standard locally convex topology (test functions). If $\phi_j \to \phi $ in $D[\R_{I}^{\infty}]$, then $\phi_j \to \phi $ in the norm topology of $S{D^2}[\R_{I}^{\infty}],$ so that $D[\R_{I}^{\infty}] \subset S{D^2}[\R_{I}^{\infty}]$ as continuous embedding.
    \end{thm}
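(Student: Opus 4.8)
The plan is to reduce the statement on $\R_I^\infty$ to the finite-dimensional statement already established (namely that $D[\R_I^n]\subset SD^2[\R_I^n]$ as a continuous embedding), exactly as the constructions of $SD^2[\R_I^\infty]$ and $D[\R_I^\infty]$ were set up to permit. First I would recall the crucial support localisation: if $\phi_j\to\phi$ in $D[\R_I^\infty]$, then by definition there is a single compact set $\mathcal K\subset\R_I^\infty$ containing the supports of all $\phi_j-\phi$, and $D_\infty^\alpha\phi_j\to D_\infty^\alpha\phi$ uniformly on $\mathcal K$ for every multi-index $\alpha\in\N_0^\infty$. By the Remark following Definition~\ref{def19} (essential supports concentrate in $\R_I^N$ for some $N$), together with the fact that a compact subset of $\R_I^\infty$ meets only finitely many of the coordinate blocks, $\mathcal K$ is contained in $\R_I^N$ for some fixed $N$; hence every $\phi_j$ and $\phi$ may be regarded as elements of $C_c^\infty[\R_I^N]=D[\R_I^N]$, and the convergence $\phi_j\to\phi$ holds in the locally convex topology of $D[\R_I^N]$ as well.

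The second step is to invoke the finite-dimensional result: on $\R_I^N$ we already know $D[\R_I^N]\subset SD^2[\R_I^N]$ continuously, so $\|\phi_j-\phi\|_{SD^2[\R_I^N]}\to0$. Concretely this follows from the estimate used earlier: choosing the $\{\mcE_k\}$ supported in a fixed compact neighbourhood $K\supset\mathcal K$, only finitely many (say those indexed by a set $L$) have support meeting $K$, and for those
\begin{align*}
\|\phi_j-\phi\|_{SD^2[\R_I^N]}^2 &= \sum_{k\in L} t_k\Bigl|\int_{\R_I^N}\mcE_k(x)\bigl[\phi_j(x)-\phi(x)\bigr]\,d\lambda_\infty(x)\Bigr|^2 \\
&\le \Bigl(\sum_{k\in L}t_k\,\mathrm{vol}(\mcB_k)^2\Bigr)\sup_{x\in K}\bigl|\phi_j(x)-\phi(x)\bigr|^2 \longrightarrow 0,
\end{align*}
since the prefactor is finite and the uniform convergence on $K$ handles the supremum. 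The third step is to transfer this back to $\R_I^\infty$: by Theorem~2.8 ($SD^2[\widehat\R_I^\infty]=SD^2[\R_I^\infty]$) the norm of $SD^2[\R_I^\infty]$ restricted to $SD^2[\R_I^N]$ agrees with the $SD^2[\R_I^N]$-norm (the defining Cauchy sequence can be taken constant once the support is stabilised), so $\|\phi_j-\phi\|_{SD^2[\R_I^\infty]}=\|\phi_j-\phi\|_{SD^2[\R_I^N]}\to0$. This yields the claimed norm convergence, and since this holds for every convergent sequence the inclusion $D[\R_I^\infty]\subset SD^2[\R_I^\infty]$ is a continuous embedding.

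The main obstacle, and the only point requiring genuine care, is the first step: justifying that a compact set $\mathcal K\subset(\R_I^\infty,\tau)$ is automatically contained in some finite block $\R_I^N$, i.e.\ that $C_c^\infty[\R_I^\infty]=\bigcup_N C_c^\infty[\R_I^N]$. This is where the specific co-product topology $\tau$ (with $X_2$ discrete) and the structure $\widehat\R_I^\infty=\bigcup_k\R_I^k$ with $\R_I^k\subset\R_I^{k+1}$ must be used: a compact set cannot have positive-measure overlap with the discrete part $X_2$, and within $X_1$ compactness together with the increasing union forces containment in a single $\R_I^N$. Once this reduction is in hand, everything else is a direct appeal to the already-proven finite-dimensional embedding and to Theorem~2.8; I would present the argument in essentially the three steps above, keeping the finite-dimensional estimate brief since it merely repeats the computation given for $D[\R_I^n]\subset SD^2[\R_I^n]$.
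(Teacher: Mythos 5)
Your proposal follows essentially the same route as the paper: the paper's own argument is just the one-line reduction ``$D[\R_I^n]\subset SD^2[\R_I^n]$ continuously, hence by the construction of $D[\R_I^\infty]$ and $SD^2[\R_I^\infty]$ the same holds in the limit,'' and your three steps (localisation of the compact support into some $\R_I^N$, the finite-dimensional estimate, transfer via $SD^2[\widehat\R_I^\infty]=SD^2[\R_I^\infty]$) simply supply the details the paper omits. One minor slip: the index set $L$ of cubes $\mcB_k$ meeting $K$ is in general infinite (infinitely many small cubes centred at rational points of $K$), but this is harmless since your prefactor is still bounded by $\sum_k t_k=1$.
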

    
    \begin{cor}
      Let $D[\R_{I}^{\infty}] $ be $C_{c}^{\infty}[\R_{I}^{\infty}]$ equipped with the standard locally convex topology (test functions). If $\phi_j \to \phi $ in $D[\R_{I}^{\infty}]$, then $\phi_j \to \phi $ in the norm topology of $S{D^2}[\R_{I}^{\infty}],$ so that $D[\R_{I}^{\infty}] \subset S{D^2}[\R_{I}^{\infty}]$ as a dense embedding. 
    \end{cor}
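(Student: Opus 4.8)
The plan is to bootstrap from the finite-dimensional case combined with the density statements already established, rather than re-running the uniform-convergence estimate from scratch. First I would recall from Theorem~\ref{thm21} (and the preceding theorem asserting $D[\R_I^\infty] \subset SD^2[\R_I^\infty]$ as a continuous embedding) that if $\phi_j \to \phi$ in $D[\R_I^\infty]$ then $\phi_j \to \phi$ in the norm of $SD^2[\R_I^\infty]$; this already gives the continuity part and the inclusion of sets, so the only new content in the corollary is \emph{density} of $D[\R_I^\infty]$ in $SD^2[\R_I^\infty]$.

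To get density I would argue in two stages. Stage one: by the very construction of $SD^2[\R_I^\infty]$, the space $L^1[\R_I^\infty]$ is dense in it (it is the completion of $L^1[\R_I^\infty]$ under the inner-product norm), and by Theorem~1.5 / the identification $L^1[\R_I^\infty] = L^1[\widehat{\R}_I^\infty] = \bigcup_{n=1}^\infty L^1[\R_I^n]$, every element of $L^1[\R_I^\infty]$ agrees $\lambda_\infty$-a.e.\ with some $e_n$-tame function whose essential support sits inside $\R_I^N$ for some $N$. Stage two: on $\R_I^N$ the classical fact that $C_c^\infty[\R_I^N] = D[\R_I^N]$ is dense in $L^1[\R_I^N]$ (and, via the continuous embedding $L^1[\R_I^N] \subset SD^2[\R_I^N]$, dense in $SD^2[\R_I^N]$) lets me approximate the given function in $SD^2$-norm by test functions on $\R_I^N$; these extend to elements of $D[\R_I^\infty]$ by tensoring with the appropriate cut-off $h_N(\widehat{x})$, or more simply by noting $D[\R_I^N] \subset D[\R_I^\infty]$. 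Chaining the two approximations and using that $\bigcup_n SD^2[\R_I^n]$ is dense in $SD^2[\R_I^\infty]$ (Theorem~\ref{thm21} on the equality $SD^2[\widehat{\R}_I^\infty] = SD^2[\R_I^\infty]$), a diagonal/$\varepsilon/2$ argument yields that $D[\R_I^\infty]$ is norm-dense in $SD^2[\R_I^\infty]$.

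Concretely, the steps in order are: (i) invoke Theorem~\ref{thm21} for the continuous embedding, reducing the task to density; (ii) fix $f \in SD^2[\R_I^\infty]$ and $\varepsilon > 0$, pick $g \in \bigcup_n SD^2[\R_I^n]$, say $g \in SD^2[\R_I^N]$, with $\|f-g\|_{SD^2} < \varepsilon/2$; (iii) since $L^1[\R_I^N]$ is dense in $SD^2[\R_I^N]$ and $D[\R_I^N]$ is dense in $L^1[\R_I^N]$ in the $L^1$-norm, and the inclusion $L^1[\R_I^N] \hookrightarrow SD^2[\R_I^N]$ is norm-continuous, choose $\psi \in D[\R_I^N] \subset D[\R_I^\infty]$ with $\|g - \psi\|_{SD^2} < \varepsilon/2$; (iv) conclude $\|f-\psi\|_{SD^2} < \varepsilon$, so $D[\R_I^\infty]$ is dense. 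Combined with part (i), this is exactly the assertion of the corollary.

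The main obstacle I anticipate is the bookkeeping at the "tame" level: one must make sure that a test function on $\R_I^N$, once regarded inside $\R_I^\infty$, genuinely lies in $D[\R_I^\infty]$ (i.e.\ is $C^\infty$ with compact support in the $\tau$-topology and has the right decay against all the functionals $\mcE_k$), and that the $SD^2[\R_I^N]$-norm of an $e_N$-tame function coincides with its $SD^2[\R_I^\infty]$-norm so the estimates transfer cleanly. This is where the definition of $\mcE_k$ as a product supported on $\Pi_{j=1}^n I_k^i$ and the remark that sets of nonzero measure are concentrated in $X_1 \cap \R_I^N$ do the real work; once that compatibility is in hand, everything else is the routine $\varepsilon/2$ splicing indicated above.
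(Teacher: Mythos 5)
Your proposal is correct and follows essentially the same route as the paper: continuity is quoted from the preceding theorem, and density is obtained by chaining $D[\R_{I}^{\infty}]$ dense in $L^1[\R_{I}^{\infty}]$ with $L^1[\R_{I}^{\infty}]$ dense in $S{D^2}[\R_{I}^{\infty}]$ by construction. The only difference is that you spell out the $\varepsilon/2$ reduction through the finite-dimensional spaces $S{D^2}[\R_I^N]$, which the paper leaves implicit.
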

    \begin{proof}
    By the Theorem \ref{thm21}, since $\alpha$ is arbitrary, we see that  $D[\R_{I}^{\infty}] \subset S{D^2}[\R_{I}^{\infty}] $ as a continuous embedding.     Since $C_{c}^{\infty}[\R_{I}^{\infty}] $ is dense in $L^1[\R_{I}^{\infty}]$, so $D[\R_{I}^{\infty}] $ is dense in $S{D^2}[\R_{I}^{\infty}].$
    \end{proof}
    \begin{thm}
    Let $D[\R_{I}^{\infty}] $ be $C_{c}^{\infty}[\R_{I}^{\infty}]$ equipped with the standard locally convex topology (test functions). If  $ T \in D^{'}[\R_{I}^{\infty}] $, then $ T \in  S{D^2}[\R_{I}^{\infty}]^{'}$  so that $ D^{'}[\R_{I}^{\infty}] \subset S{D^2}[\R_{I}^{\infty}]^{'}$ as a continuous dense embedding.
    \end{thm}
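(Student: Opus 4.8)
The plan is to deduce the duality statement $D'[\R_I^\infty] \subset SD^2[\R_I^\infty]'$ from the already-established continuous dense embedding $D[\R_I^\infty] \subset SD^2[\R_I^\infty]$ together with the Hilbert-space structure of $SD^2[\R_I^\infty]$, exactly mirroring the finite-dimensional argument given earlier for $SD^2[\R_I^n]$. First I would fix $T \in D'[\R_I^\infty]$, i.e. a continuous linear functional on $C_c^\infty[\R_I^\infty]$ with the locally convex topology. Because $D[\R_I^\infty]$ sits inside $SD^2[\R_I^\infty]$ as a \emph{continuous} embedding (the preceding theorem and corollary), the topology $D[\R_I^\infty]$ inherits from $SD^2[\R_I^\infty]$ is coarser than its own locally convex topology; I would need the converse direction on the relevant subspace, or more precisely an extension result, so the natural tool is the Hahn–Banach theorem: a continuous linear functional on a dense subspace of a normed space need not \emph{a priori} be bounded, so one must first check that $T$ is $\|\cdot\|_{SD^2}$-continuous on $D[\R_I^\infty]$. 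This is the crux and I address it below.

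The key steps, in order, would be: (i) show that $T$, viewed on $D[\R_I^\infty]$, is continuous for the norm $\|\cdot\|_{SD^2}$; (ii) invoke Hahn–Banach to extend $T$ to a bounded linear functional $\widetilde T$ on all of $SD^2[\R_I^\infty]$; (iii) apply the Riesz representation theorem in the Hilbert space $SD^2[\R_I^\infty]$ to get $g \in SD^2[\R_I^\infty]$ with $\widetilde T(f) = (f,g)_{SD^2}$ for all $f$, so that $T \leftrightarrow g$ identifies $T$ with an element of $SD^2[\R_I^\infty]' \cong SD^2[\R_I^\infty]$; (iv) check that the map $T \mapsto \widetilde T$ (equivalently $T \mapsto g$) is injective, which follows because $D[\R_I^\infty]$ is dense in $SD^2[\R_I^\infty]$, so two extensions agreeing on $D[\R_I^\infty]$ agree everywhere; (v) observe density of $D'[\R_I^\infty]$ in $SD^2[\R_I^\infty]'$: since $D[\R_I^\infty]$ is dense in the Hilbert space $SD^2[\R_I^\infty]$, under the Riesz identification every $g \in SD^2[\R_I^\infty]$ is a limit of elements of $D[\R_I^\infty] \subset D'[\R_I^\infty]$ (each test function acting by the inner product is in particular a distribution), giving the dense embedding $D'[\R_I^\infty] \subset SD^2[\R_I^\infty]'$.

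The main obstacle is step (i): a continuous functional on $D[\R_I^\infty]$ with its (non-normable, strict inductive-limit) LF-topology is bounded on bounded sets there, but $SD^2$-boundedness is not automatic from abstract nonsense. Here I would use the structure of the construction: $T$ continuous on $D[\R_I^\infty]$ means that on each piece $C_c^\infty(\mathcal K)$ (support in a fixed compact $\mathcal K$) there are a constant $C_{\mathcal K}$ and an order $m$ with $|T(\phi)| \le C_{\mathcal K}\sum_{|\alpha|\le m}\sup_{\mathcal K}|D_\infty^\alpha \phi|$. Since, as in the proof of the embedding $D[\R_I^\infty]\subset SD^2[\R_I^\infty]$, one has $\sup_{\mathcal K}|D^\alpha\phi|$ dominated (up to the $t_k$-weighted sum over the finitely many cubes $\mathcal E_l$ meeting $\mathcal K$) by a multiple of $\|\phi\|_{SD^2}$-type quantities, I would obtain $|T(\phi)| \le C'\,\|\phi\|_{SD^2}$ uniformly. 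Because the essential support of any element of $SD^2[\R_I^\infty]$ is concentrated in some $\R_I^N$ (the remark following the Existence theorem), it suffices to argue on a single finite level $\R_I^N$ where the earlier finite-dimensional result applies verbatim, and then pass to the union/closure. Once this norm bound is in hand, steps (ii)–(v) are the standard Hahn–Banach/Riesz package, and I would note the proof is otherwise parallel to the $\R_I^n$ case treated above.
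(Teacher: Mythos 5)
Your overall architecture --- Hahn--Banach extension from the dense subspace $D[\R_I^\infty]$ followed by the Riesz representation theorem in the Hilbert space $SD^2[\R_I^\infty]$, with injectivity and density obtained from the density of $D[\R_I^\infty]$ in $SD^2[\R_I^\infty]$ --- is exactly the paper's proof (and its proof of the finite-dimensional analogue). You go beyond the paper in correctly isolating the crux: Hahn--Banach only extends functionals that are continuous for the topology that $\|\cdot\|_{SD^2}$ induces on $D[\R_I^\infty]$, whereas $T\in D'[\R_I^\infty]$ is a priori continuous only for the much finer LF-topology. The paper silently skips this point.

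However, your proposed resolution of step (i) does not work, because the inequality you invoke points the wrong way. The embedding theorem for $D[\R_I^\infty]\subset SD^2[\R_I^\infty]$ establishes $\|\phi\|_{SD^2}\le M\sum_{|\alpha|\le m}\sup_{\mathcal K}|D^\alpha\phi|$, i.e.\ the $SD^2$ norm is dominated by the $C^m(\mathcal K)$ seminorms; that is precisely what makes the inclusion $D\to SD^2$ continuous. To pass from $|T(\phi)|\le C_{\mathcal K}\sum_{|\alpha|\le m}\sup_{\mathcal K}|D_\infty^\alpha\phi|$ to $|T(\phi)|\le C'\|\phi\|_{SD^2}$ you would need the reverse estimate $\sup_{\mathcal K}|D^\alpha\phi|\le C\|\phi\|_{SD^2}$, and this fails: the $SD^2$ norm records only the countably many weighted averages $\int_{\R_I^\infty}\mathcal E_k(x)\phi(x)\,d\lambda_\infty(x)$, and rapidly oscillating bump functions supported in $\mathcal K$ can have arbitrarily small $SD^2$ norm while their sup-norms (let alone those of their derivatives) stay bounded below. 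Reducing to a finite level $\R_I^N$ does not help, since the same difficulty is already present for $SD^2[\R_I^n]$. So step (i) as written is a genuine gap --- the same one the paper's own proof leaves open --- and without it the Hahn--Banach/Riesz package in your steps (ii)--(v) cannot be launched.
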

    \begin{proof}
    As $D[\R_{I}^{\infty}] $ is locally dense convex subspace of $S{D^2}[\R_{I}^{\infty}],$ then every continuous linear functional, $\mathcal{T}$ defined on $D[\R_{I}^{\infty}], $ can be extended to a continuous linear functional on $S{D^2}[\R_{I}^{\infty}].$\\
    By Riesz representation theorem, every continuous linear functional $\mathcal{T} $ defined on $S{D^2}[\R_{I}^{\infty}] $ is of the form $\mathcal{T}(f) =<f, g >_{SD}, $ for some $ g \in S{D^2}[\R_{I}^{\infty}].$ So, $ \mathcal{T} \in S{D^2}[\R_{I}^{\infty}]^{'}$ and $ \mathcal{T} \leftrightarrow g $ for each $\mathcal{T} \in D^{'}[\R_{I}^{\infty}].$     So, it is possible to map $D^{'}[\R_{I}^{\infty}] $ into $S{D^2}[\R_{I}^{\infty}] $ as a continuous dense embedding.
    \end{proof}
    \begin{thm}
     For any $f,g \in S{D^2}[\R_I^\infty]$ and any multi-index $\alpha,$ we have $$ < D^{\alpha}f, g>_{SD[\R_I^\infty]} = (-i)^{\alpha}< f, g >_{SD[\R_I^\infty]}.$$
    \end{thm}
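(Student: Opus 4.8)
The plan is to reduce the identity to the finite-dimensional case already treated --- part (3) of the theorem asserting $D[\R_I^n]\subset S{D^2}[\R_I^n]$, where $(D^{\beta}f,g)_{SD}=(-i)^{\beta}(f,g)_{SD}$ --- and then to invoke the compatibility of the inner products across the levels $\R_I^n$.

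First I would exploit that a multi-index $\alpha\in\N_0^\infty$ has only finitely many nonzero entries, so $D^{\alpha}$ (equivalently $D_\infty^{\alpha}$) involves differentiation in finitely many coordinates, say among the first $N_0$. By the remark on essential supports, any $f\in S{D^2}[\R_I^\infty]$ has essential support concentrated in $\R_I^N$ for some $N$; enlarging $N$ I may assume $N\ge N_0$ and that both $f$ and $g$ are ($\lambda_\infty$-a.e.\ equal to) functions in $S{D^2}[\R_I^N]$, via the identification $S{D^2}[\widehat{\R}_I^\infty]=S{D^2}[\R_I^\infty]$. Since each Jones function $\mcE_k$ is essentially tame and the integral over $\R_I^\infty$ is computed as $\lim_n\int_{\R_I^n}$ (Definition \ref{Def31} and the surrounding discussion), every factor $\int_{\R_I^\infty}\mcE_k(x)f(x)\,d\lambda_\infty(x)$ coincides with the corresponding integral over $\R_I^N$; hence $<\cdot,\cdot>_{SD[\R_I^\infty]}$, restricted to $S{D^2}[\R_I^N]$, coincides with $<\cdot,\cdot>_{SD[\R_I^N]}$. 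The support of $D^{\alpha}f$ is contained in that of $f$, so it suffices to prove the identity on $S{D^2}[\R_I^N]$.

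On $\R_I^N$ the argument mirrors the finite-dimensional one. Each $\mcE_k$ lies in $C_c^\infty[\R_I^N]$, and for every $f\in S{D^2}[\R_I^N]$ the pairing $\int_{\R_I^N}\mcE_k(x)\,D^{\alpha}f(x)\,d\lambda_\infty(x)$ is, by the defining integration-by-parts relation for the distributional derivative (cf. (\ref{eq1}) and the definition of the weak derivative),
$$\int_{\R_I^N}\mcE_k(x)\,D^{\alpha}f(x)\,d\lambda_\infty(x)=(-1)^{|\alpha|}\int_{\R_I^N}(D^{\alpha}\mcE_k)(x)\,f(x)\,d\lambda_\infty(x).$$
Rewriting this with the normalized operator $D_{\alpha,\infty}=\prod_k\bigl(\tfrac{1}{2\pi i}\tfrac{\partial}{\partial x_k}\bigr)^{\alpha_k}$ --- equivalently, absorbing the sign $(-1)^{|\alpha|}$ into the factor $(-i)^{|\alpha|}$ exactly as in the finite-dimensional proof --- gives $\int\mcE_k D^{\alpha}f\,d\lambda_\infty=(-i)^{|\alpha|}\int\mcE_k f\,d\lambda_\infty$ for every $k$. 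I would then multiply by $t_k\bigl[\int\mcE_k g\,d\lambda_\infty\bigr]^c$, sum over $k$, and read off $<D^{\alpha}f,g>_{SD[\R_I^\infty]}=(-i)^{|\alpha|}<f,g>_{SD[\R_I^\infty]}$, where $(-i)^{\alpha}$ is understood as $(-i)^{|\alpha|}$.

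I expect the main obstacle to be making the reduction to level $N$ fully rigorous: one must check that differentiation commutes with the infinite sum defining the $SD^2$-norm and inner product, that the essential support of $D^{\alpha}f$ genuinely stays inside $\R_I^N$, and that the Cauchy sequences used in Definition \ref{Def31} to evaluate the two relevant integrals are compatible with one another and with the sequences realizing $f$ and $g$ in $S{D^2}[\R_I^N]$. A secondary, essentially cosmetic, point is the passage from the sign $(-1)^{|\alpha|}$ produced by integration by parts to the factor $(-i)^{|\alpha|}$ in the statement; this is purely a matter of which normalization of the derivative the symbol $D^{\alpha}$ carries in the inner-product identity, and I would fix that convention (using $D_{\alpha,\infty}$) explicitly before carrying out the computation.
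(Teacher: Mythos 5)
Your proposal takes essentially the same route as the paper: integrate by parts against each $\mcE_k$ to get $\int_{\R_I^\infty}\mcE_k\,D^{\alpha}f\,d\lambda_\infty=(-1)^{|\alpha|}\int_{\R_I^\infty}(D^{\alpha}\mcE_k)\,f\,d\lambda_\infty$, convert this into $(-i)^{|\alpha|}\int_{\R_I^\infty}\mcE_k\,f\,d\lambda_\infty$, then multiply by the $g$-factor and sum over $k$; the paper simply performs this directly on $\R_I^\infty$ using equation (\ref{eq1}), without your (harmless) preliminary reduction to a finite level $N$. The one step you flag as delicate --- absorbing $(-1)^{|\alpha|}\int (D^{\alpha}\mcE_k)f$ into $(-i)^{|\alpha|}\int\mcE_k f$, which genuinely requires a normalization convention (or an eigenfunction-type property of the $\mcE_k$) --- is asserted without justification in the paper's proof as well, so your argument is no weaker than, and structurally identical to, the original.
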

    \begin{proof}
    Let $\mathcal{E}_k \in C_c^{\infty}[\R_I^\infty]. $ Then for $f \in S{D^2}[\R_I^\infty]$, we have 
    \begin{align*}
    \int_{\R_I^\infty}\mathcal{E}_k(x) D^{\alpha}f(x)d\lambda_\infty(x) &= (-1)^{|\alpha|}\int_{\R_I^\infty} D^{\alpha}\mathcal{E}_k(x)f(x)d\lambda_\infty(x)\\&=(-i)^{\alpha}\int_{\R_I^\infty}\mathcal{E}_k(x)f(x)d\lambda_\infty(x).
    \end{align*}
    Now, for any $ g \in S{D^2}[\R_I^\infty]$, $$< D^{\alpha}f, g>_{SD^{2}[\R_I^\infty]} = (-i)^{\alpha}< f, g >_{S{D^2}}[\R_I^\infty].$$   
    \end{proof}
     \begin{thm}
  The function space $\mathcal{S}[\R_I^\infty] $, of rapid decrease at infinity are contained in $S{D^2}[\R_I^\infty]$ as continuous embedding, so that $\mathcal{S}^{'}[\R_I^\infty] \subset S{D^2}[R_I^\infty]^{'}.$
  \end{thm}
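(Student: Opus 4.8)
The plan is to establish the continuous embedding $\mathcal{S}[\R_I^\infty]\subset SD^2[\R_I^\infty]$ by a direct seminorm estimate, and then to deduce the dual inclusion by the Hahn--Banach/Riesz argument already used for $D'[\R_I^\infty]\subset SD^2[\R_I^\infty]'$. First I would record the key inequality: each $\mcE_k$ is bounded by $1$ and supported in a box of $\lambda_\infty$-volume at most $1$, so for every $g\in\mathcal{S}[\R_I^\infty]$ and every $k$,
$$\Big|\int_{\R_I^\infty}\mcE_k(x)g(x)\,d\lambda_\infty(x)\Big|\le \sup_{\R_I^\infty}|g|=\|g\|_{0,0};$$
summing against the weights $t_k$, which add up to $1$, gives $\|g\|_{SD^2[\R_I^\infty]}\le\|g\|_{0,0}$. (That such a $g$ really is an element of $SD^2[\R_I^\infty]$ uses $\mathcal{S}[\R_I^\infty]\subset L^1[\R_I^\infty]$, which one checks by reducing to a finite $\R_I^N$ via the remark on essential supports and invoking the classical $\R^N$ integrability bounds.) Hence if $\phi_j\to\phi$ in $\mathcal{S}[\R_I^\infty]$ then $\|\phi_j-\phi\|_{0,0}\to 0$ and therefore $\|\phi_j-\phi\|_{SD^2[\R_I^\infty]}\to 0$, so the inclusion is continuous; density then follows at once from $D[\R_I^\infty]\subset\mathcal{S}[\R_I^\infty]\subset SD^2[\R_I^\infty]$ together with the already-proved density of $D[\R_I^\infty]$ in $SD^2[\R_I^\infty]$.

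For the dual statement I would copy the earlier argument: $\mathcal{S}[\R_I^\infty]$ is a dense locally convex subspace of the Hilbert space $SD^2[\R_I^\infty]$, so by the corollary of the Hahn--Banach theorem every continuous linear functional $T$ on $\mathcal{S}[\R_I^\infty]$ extends to a continuous linear functional on $SD^2[\R_I^\infty]$; by the Riesz representation theorem the extension has the form $f\mapsto\langle f,g\rangle_{SD}$ for a unique $g\in SD^2[\R_I^\infty]$; and the correspondence $T\leftrightarrow g$ realizes $\mathcal{S}'[\R_I^\infty]\subset SD^2[\R_I^\infty]'$, which is the second assertion.

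The part I expect to be genuinely delicate is making the first paragraph airtight in the infinite-dimensional setting: one must know that the products $\mcE_k g$ are $\lambda_\infty$-integrable, that the (enumeration-dependent) boxes supporting the $\mcE_k$ all have volume $\le 1$ as in the finite-dimensional construction, and above all that $\mathcal{S}[\R_I^\infty]\subset L^1[\R_I^\infty]$ --- the last point being exactly where the essential-support remark, that every set of positive $\lambda_\infty$-measure is concentrated in some $\R_I^N$, does the work, so that the ordinary Schwartz-space bounds on $\R^N$ apply. Once $\|g\|_{SD^2[\R_I^\infty]}\le\|g\|_{0,0}$ is secured, continuity, density and the passage to duals are routine and run parallel to the corresponding statements for $D[\R_I^\infty]$ and for the finite-dimensional spaces $SD^2[\R_I^n]$.
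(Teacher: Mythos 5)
Your argument is correct, but it takes a more explicit and essentially different route from the paper's. The paper's entire proof is a reduction to the finite-dimensional statement: it cites $\mathcal{S}[\R_I^n]\subset SD^2[\R_I^n]$ as a continuous embedding and declares the passage to $\R_I^\infty$ (via the inductive-limit construction $SD^2[\R_I^\infty]=\overline{\bigcup_n SD^2[\R_I^n]}$) to be ``easy'' and left to the reader. You instead work directly on $\R_I^\infty$: the uniform bound $\bigl|\int_{\R_I^\infty}\mcE_k g\,d\lambda_\infty\bigr|\le \sup|g|$ together with $\sum_k t_k=1$ gives $\|g\|_{SD^2[\R_I^\infty]}\le\|g\|_{0,0}$ in one stroke, which yields continuity without any limit over $n$, and your Hahn--Banach/Riesz argument for $\mathcal{S}'[\R_I^\infty]\subset SD^2[\R_I^\infty]'$ is a faithful transplant of the paper's own proof of $D'[\R_I^\infty]\subset SD^2[\R_I^\infty]'$. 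What your route buys is that it actually supplies the omitted content, and it correctly isolates the one genuinely nontrivial point --- that a Schwartz function on $\R_I^\infty$ lies in (the completion of) $L^1[\R_I^\infty]$ at all --- and resolves it with the essential-support remark (every set of positive $\lambda_\infty$-measure is concentrated in some $\R_I^N$), which is exactly the tool the paper's framework provides for this purpose; the paper's one-line reduction silently assumes this. What the paper's route buys, in principle, is uniformity with its other proofs (the $W^m_p$ and $D[\R_I^\infty]$ embeddings are all argued by the same closure-of-the-union template), but as written it proves strictly less than you do.
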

  \begin{proof} 
  Since $\mathcal{S}[\R_I^n] \subset S{D^2}[\R_I^n] $ continuous embedding, so that $\mathcal{S}^{'}[\R_I^n] \subset S{D^2}[\R_I^n]^{'}$.   The  remaining proof is easy, so we left to the reader.
  \end{proof}
     In general we construct the space $S{D^p}[\R_I^\infty]$  for  $f \in L^p[\R_{I}^{\infty}], $ define  $$||f||_{S{D^p}[\R_{I}^{\infty}]} =\left\{\begin{array}{c}\left(\sum\limits_{k=1}^{\infty}t_k\left|\int\limits_{\R_{I}^{\infty}}\mathcal{E}_k(x){D}^{\alpha}f(x)d\lambda_\infty(x)\right|^p\right)^{\frac{1}{p}}, \mbox{~for~} 1\leq p<\infty;\\
  \sup\limits_{k\geq 1}\left|\int\limits_{\R^{\infty}_{I}}\mathcal{E}_k(x){D}^{\alpha}f(x)d\lambda_{\infty}(x)\right|, \mbox{~for~} p=\infty. \end{array}\right.$$  It is easy to see that $||f||_{SD^p[\R_{I}^{\infty}]}$ defines a norm on $ L^p[\R_{I}^{\infty}]$. If $S{D^p}[\R_{I}^{\infty}]$ is completion of $L^p[\R_{I}^{\infty}] $ then we have the following.
  \begin{thm}
   For $S{D^p}[\R_{I}^{\infty}]$, $1 \leq p \leq \infty $
   \begin{enumerate}
  \item If $f_n \to f$ weakly in $L^p[\R_{I}^{\infty}]$ then $f_n \to f $ strongly in $S{D^p}[\R_{I}^{\infty}].$
  \item  $ SD^p[\R_{I}^{\infty}]$ is uniformly convex.
  \item  If $1\leq q\leq\infty$ and  $\frac{1}{p}+ \frac{1}{q}=1 $, then dual space of $ SD^p[\R_{I}^{\infty}] $ is $ SD^q[\R_{I}^{\infty}].$
  \item $S{D^\infty}[\R_I^\infty] \subset S{D^p}[\R_I^\infty].$  
  \end{enumerate}
  \end{thm}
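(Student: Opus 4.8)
The plan is to derive all four parts from the corresponding facts already proved for the finite-dimensional spaces $SD^p[\R_I^n]$, transported to the limit through the identification $SD^p[\R_I^\infty]=SD^p[\widehat{\R}_I^\infty]$, i.e. $SD^p[\R_I^\infty]$ is the completion of $\bigcup_{n\ge1}SD^p[\R_I^n]$. The one structural point used repeatedly is that every $\mathcal{E}_k$ is essentially tame with support in a finite-dimensional box, so $\mathcal{E}_k$ and each $D^\alpha\mathcal{E}_k$ lie in $C_c^\infty[\R_I^\infty]\subset L^q[\R_I^\infty]$ for all $q\in[1,\infty]$; hence $f\mapsto\int_{\R_I^\infty}\mathcal{E}_k D^\alpha f\,d\lambda_\infty=(-1)^{|\alpha|}\int_{\R_I^\infty}D^\alpha\mathcal{E}_k\,f\,d\lambda_\infty$ (by \eqref{eq1}) is a well-defined bounded functional on $L^p[\R_I^\infty]$.

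For (1): a weakly convergent sequence $\{f_n\}$ in $L^p[\R_I^\infty]$ is norm bounded, say $\sup_n\|f_n-f\|_p\le M$. Writing $a_k^{(n)}=\int_{\R_I^\infty}\mathcal{E}_k D^\alpha(f_n-f)\,d\lambda_\infty=(-1)^{|\alpha|}\int_{\R_I^\infty}D^\alpha\mathcal{E}_k\,(f_n-f)\,d\lambda_\infty$, for each fixed $k$ we get $a_k^{(n)}\to0$ (since $D^\alpha\mathcal{E}_k\in L^q[\R_I^\infty]$), and $|a_k^{(n)}|\le M\|D^\alpha\mathcal{E}_k\|_q$ is bounded uniformly in $k$ and $n$. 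When $1\le p<\infty$ we have $\|f_n-f\|_{SD^p}^p=\sum_{k}t_k|a_k^{(n)}|^p$ with $\sum_k t_k=1$, so Tannery's theorem (dominated convergence for series) gives $\|f_n-f\|_{SD^p}\to0$. For $p=\infty$ I would reproduce the argument of the $\R_I^n$ version, which forces the weak limit to be approached uniformly in $k$ when $\{f_n\}$ ranges over a weakly precompact family.

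For (2) and (3), when $1<p<\infty$: embed each $SD^p[\R_I^n]$ isometrically (by the way the $\mathcal{E}_k$ are built) into $SD^p[\R_I^\infty]$, and observe that Clarkson's inequalities furnish a modulus of uniform convexity for the weighted norm $(\xi_k)\mapsto\bigl(\sum_k t_k|\xi_k|^p\bigr)^{1/p}$ that is independent of $n$; uniform convexity then passes to the completion, giving (2), and in particular $SD^p[\R_I^\infty]$ is reflexive. For (3) the duality map
\[
L_g(f)=\sum_{k=1}^{\infty}t_k\,l_k^p(g)\int_{\R_I^\infty}\mathcal{E}_k(x)g(x)\,d\lambda_\infty(x)\int_{\R_I^\infty}\mathcal{E}_k(y)f^{*}(y)\,d\lambda_\infty(y)
\]
from the $\R_I^n$ theorem extends verbatim, and together with reflexivity and the Riesz-type representation available on each $SD^q[\R_I^n]$ it identifies $(SD^p[\R_I^\infty])'$ with $SD^q[\R_I^\infty]$; the non-reflexive endpoints $p\in\{1,\infty\}$ are reduced to the $\R_I^n$ statements by the same direct-limit bookkeeping. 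Finally (4) is immediate: for $f\in SD^\infty[\R_I^\infty]$ one has $\sup_k\bigl|\int_{\R_I^\infty}\mathcal{E}_k D^\alpha f\,d\lambda_\infty\bigr|=\|f\|_{SD^\infty}$, so for $1\le p<\infty$,
\[
\|f\|_{SD^p}^p=\sum_{k=1}^{\infty}t_k\Bigl|\int_{\R_I^\infty}\mathcal{E}_k D^\alpha f\,d\lambda_\infty\Bigr|^p\le\|f\|_{SD^\infty}^p\sum_{k=1}^{\infty}t_k=\|f\|_{SD^\infty}^p,
\]
whence $SD^\infty[\R_I^\infty]\subset SD^p[\R_I^\infty]$ with the inclusion of norm at most one.

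I expect the genuine difficulty to be (3) together with the endpoints of (2): to make the identification of $(SD^p[\R_I^\infty])'$ rigorous one must check that the dual of the completion of $\bigcup_n SD^p[\R_I^n]$ is the inverse limit of the $SD^q[\R_I^n]$ along the restriction maps, and for $p=1$ and $p=\infty$ the uniformly convex / Hilbertian machinery is unavailable, so those cases have to be squeezed out of the compatible finite-dimensional structure rather than obtained by a soft argument; the $p=\infty$ instance of (1) is delicate for the same reason.
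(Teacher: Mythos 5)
Your overall strategy --- transporting the finite-dimensional results through the identification of $SD^p[\R_I^\infty]$ with the completion of $\bigcup_n SD^p[\R_I^n]$ --- is the same as the paper's, but your execution of (1) and (2) is genuinely different and in both cases more complete. For (1) the paper only records that $\int\mathcal{E}_k(x)\,D^\alpha(f_n-f)(x)\,d\lambda_\infty(x)\to 0$ for each fixed $k$ and stops there; your extra step --- the uniform bound $|a_k^{(n)}|\le M\|D^\alpha\mathcal{E}_k\|_q$ together with Tannery's theorem applied to the weighted series $\sum_k t_k|a_k^{(n)}|^p$ with $\sum_k t_k=1$ --- is exactly what is needed to pass from termwise convergence to convergence of the $SD^p$ norm, and that step is absent from the paper. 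For (2) the paper argues that $L^p[\widehat{\R}_I^\infty]$ is uniformly convex and densely embedded in $SD^p[\widehat{\R}_I^\infty]$, ``therefore'' the closure is uniformly convex; density of a uniformly convex subspace carrying a different norm proves nothing about the ambient space, whereas your route (Clarkson's inequalities for the weighted $\ell^p$-type norm give a modulus independent of $n$, and uniform convexity with a fixed modulus survives completion) is the correct mechanism and is in fact what the paper invokes in its finite-dimensional version. Your (3) and (4) coincide with the paper's in substance, with your treatment of (3) making explicit the duality map and the inverse-limit issue that the paper's union-of-duals argument elides. Two caveats you already flag, and which the paper shares rather than resolves: the $p=\infty$ case of (1) does not follow from termwise convergence plus uniform boundedness (pointwise-in-$k$ convergence of a bounded double sequence does not yield convergence of $\sup_k$), and uniform convexity genuinely fails at $p=1$ and $p=\infty$ for weighted $\ell^1$/$\ell^\infty$-type norms, so the endpoint cases of (1), (2) and (3) cannot be established by either your argument or the paper's as written.
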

  \begin{proof}
  (1) If $ \{f_n\} $ is weakly convergence in $ L^p[\mathbb{R}_{I}^{\infty}] $ with limit $ f.$ Then $$ \int\limits_{\mathbb{R}_{I}^{n}}\mathcal{E}_k(x)|f_n(x)-f(x)|d\lambda_{\infty}(x) \to 0 \mbox{~for~each~} k.$$   For each $ f_n \in SD^p[\mathbb{R}_{I}^{n}] $ for all $n$, then we have  $$ \lim\limits_{n \to \infty}\int\limits_{\mathbb{R}_{I}^{n}} \mathcal{E}_k(x)|D^\alpha(f_n(x)-f(x))|d\lambda_{\infty}(x) \to 0.$$
(2)  We know $L^p[\mathbb{R}_{I}^{n}] $ is uniformly convex for each $n $ and that is dense and compactly embedded in $SD^q[\mathbb{R}_{I}^{n}] $ for  $1\leq q \leq \infty. $ So, $\bigcup\limits_{n=1}^{\infty}L^p[\mathbb{R}_{I}^{n}]$ is uniformly convex for each $n $ and that is dense and compactly embedded in $\bigcup\limits_{n=1}^{\infty}SD^p[\mathbb{R}_{I}^{n}] $ for  $1\leq p \leq \infty. $ However $L^p[\widehat{\mathbb{R}}_{I}^{\infty}]=\bigcup\limits_{n=1}^{\infty}L^p[\mathbb{R}_{I}^{n}].$ That is  $L^p[\widehat{\mathbb{R}}_{I}^{\infty}] $ is uniformly convex, dense and compactly embedded in $SD^p[\widehat{\mathbb{R}}_{I}^{\infty}] $ for  $1\leq p \leq \infty. $ \\
  As $SD^p[\mathbb{R}_{I}^{\infty}] $ is the closure of $SD^p[\widehat{\mathbb{R}}_{I}^{\infty}].$  Therefore $SD^p[\mathbb{R}_{I}^{\infty}]$ is uniformly convex.\\ 
 (3) From (2) we have  that $ SD^p[\mathbb{R}_{I}^{\infty}] $ is reflexive, for $1<p<\infty.$ Since
 \[
 \left\{ {SD^p[\mathbb{R}_I^k]} \right\}^{\ast} = SD^q[\mathbb{R}_I^k],\;\tfrac{1}{p} + \tfrac{1}{q} = 1,\;\forall k \mbox{~~and~~}
 \]
 \[
 SD^p[\mathbb{R}_I^k] \subset SD^p[\mathbb{R}_I^{k + 1}],\;\forall k\; \Rightarrow \bigcup\limits_{k = 1}^\infty  \left\{ SD^p[\mathbb{R}_I^k] \right\}^{\ast}  = \bigcup\limits_{k = 1}^\infty  SD^q[\mathbb{R}_I^k],\;\tfrac{1}{p} + \tfrac{1}{q} = 1.
 \]
 Since each $f \in SD^p[\mathbb{R}_{I}^{\infty}]$ is the limit of a sequence $\left\{ f_n \right\} \subset \bigcup\limits_{k = 1}^\infty  SD^p[\mathbb{R}_I^k]$, we see that $\left\{ SD^p[\mathbb{R}_I^{\infty}] \right\}^{\ast}=SD^q[\mathbb{R}_I^{\infty}]$, for $\tfrac{1}{p} + \tfrac{1}{q} = 1.$\\
   (4) Let $ f \in SD^{\infty}[\mathbb{R}_{I}^{\infty}].$      This implies $\left|\int\limits_{\mathbb{R}_{I}^{\infty}} \mcE_k(x)\mathcal{D}^{\alpha} f(x) d\lambda_{\infty}(x)\right| $ is uniformly bounded for all $ k.$ It follows that $\left|\int\limits_{\mathbb{R}_{I}^{\infty}} \mcE_k(x)\mcD^{\alpha}f(x) d\lambda_{\infty}(x)\right|^p $ is uniformly bounded for  $ 1\leq p < \infty.$     It is clear from the definition of $SD^p[\mathbb{R}_{I}^{\infty}] $     that 
   \begin{align*}
  \left [\sum\left|\int\limits_{\mathbb{R}_{I}^{\infty}} \mathcal{E}_k(x) D^{\alpha}f(x)d\lambda_{\infty}(x)\right|^p\right]^\frac{1}{p} \leq M||f||_{SD^p[\mathbb{R}_{I}^{\infty}]} < \infty. 
\end{align*}   
So, $ f \in SD^p[\mathbb{R}_{I}^{\infty}].$ 
  \end{proof} 
     
      We recall the space $$X_{p}^{m}[\R^n] =\{B_{\alpha}*g =(1-\Delta)^\frac{-\alpha}{2}g~~: g \in L^p[\R^n], ~~0<\alpha< n , ~0<\alpha< m \}$$ is coincides with $W_p^m[R^n] $ when $1<p< \infty $ and $ m> 0, $ where $B_\alpha $ is the Bessel potential of order $\alpha $, $\Delta$ is the Laplacian and $* $ is the convolution operator.\\
       We define $W_{p}^{m}[\R_I^\infty]$ is the space of all functions $ u \in L_{loc}^{1}[\R_I^\infty] $ whose weak derivative $\partial^{\alpha}u \in L^p[\R_I^\infty]$ for every $\alpha \in \mathbb{N}_{0}^{\infty} $ with $|\alpha|=m.$
    \begin{thm}
    $W_p^m[\R_I^\infty] \subset S{D^2}[\R_I^\infty]$ as a continuous dense embedding, for all $m$ and all $p.$
    \end{thm}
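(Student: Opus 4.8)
The plan is to reduce the statement, exactly as in the previous constructions of this paper, to the finite-dimensional analogue $W_p^m[\R_I^n]\subset S{D^2}[\R_I^n]$ and then pass to the direct limit. First I would recall that on $\R_I^n$ (equivalently, by Theorem 1.3, on $\R^n$) the Bessel-potential characterization gives $W_p^m[\R_I^n]=X_p^m[\R_I^n]$ for $1<p<\infty$, $m>0$, so that every $u\in W_p^m[\R_I^n]$ is of the form $u=B_\alpha*g$ with $g\in L^p[\R_I^n]$; in particular $W_p^m[\R_I^n]\subset L^p[\R_I^n]+L^1[\R_I^n]$ by the integrability of the Bessel kernel. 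Combined with the embeddings already established — $L^q[\R_I^n]\subset S{D^2}[\R_I^n]$ continuously and densely for every $q$ (Theorem on $L^q\subset SD^2$) — this yields $W_p^m[\R_I^n]\subset S{D^2}[\R_I^n]$ as a continuous embedding, and density follows because $C_c^\infty[\R_I^n]=D[\R_I^n]$ is dense in both $W_p^m[\R_I^n]$ and in $S{D^2}[\R_I^n]$ (the latter already recorded in the excerpt). The endpoint cases $p=1$ and $p=\infty$ I would handle separately: for $p=1$ use $W_1^m[\R_I^n]\subset L^1[\R_I^n]\subset S{D^2}[\R_I^n]$ directly, and for $p=\infty$ note that on $\R_I^n$ the relevant functions have essential support concentrated in some $\R_I^N$ of finite measure (by the Remark following Definition 1.19), so $L^\infty$ locally embeds into $L^1$ and the same chain applies.

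Next I would verify the nesting $W_p^m[\R_I^n]\subset W_p^m[\R_I^{n+1}]$, which is immediate from the definition of $e_n$-tame functions: an element of $W_p^m[\R_I^n]$ is identified with $u^n(\overline x)\otimes h_n(\widehat x)$, its weak derivatives of order $m$ are $(\partial^\alpha u^n)\otimes h_n$, and since $h_n=\chi_{I_n}$ has $\lambda_I$-measure one, the $L^p[\R_I^{n+1}]$-norms agree with the $L^p[\R_I^n]$-norms. Hence I can set $W_p^m[\widehat\R_I^\infty]=\bigcup_{n=1}^\infty W_p^m[\R_I^n]$ and define $W_p^m[\R_I^\infty]$ as its completion, exactly the pattern used for $L^1[\R_I^\infty]$ and $S{D^p}[\R_I^\infty]$. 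The same measure-concentration argument used in the proof of Theorem 1.5 shows $W_p^m[\widehat\R_I^\infty]=W_p^m[\R_I^\infty]$ up to null sets, so there is no ambiguity in the completion.

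Finally I would assemble the pieces: since $S{D^2}[\R_I^\infty]$ is by definition the closure of $\bigcup_{n=1}^\infty S{D^2}[\R_I^n]$, and for each $n$ we have the continuous dense inclusion $W_p^m[\R_I^n]\subset S{D^2}[\R_I^n]$ with a norm bound $\|u\|_{S{D^2}[\R_I^n]}\le C\|u\|_{W_p^m[\R_I^n]}$ whose constant $C$ does not depend on $n$ (the constant comes only from the Bessel kernel $L^1$-norm and the uniform bound $\mathrm{vol}(\mcB_k)\le 1$, both $n$-independent), the inclusion $\bigcup_n W_p^m[\R_I^n]\subset\bigcup_n S{D^2}[\R_I^n]$ is continuous for the respective norms and extends to the completions, giving $W_p^m[\R_I^\infty]\subset S{D^2}[\R_I^\infty]$ as a continuous embedding. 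Density follows because $\bigcup_n W_p^m[\R_I^n]$ contains $\bigcup_n C_c^\infty[\R_I^n]=D[\widehat\R_I^\infty]$, which is dense in $L^1[\R_I^\infty]$ and hence in $S{D^2}[\R_I^\infty]$. The step I expect to require the most care is the uniformity of the embedding constant $C$ across dimensions $n$ — without it the direct-limit argument does not close — so I would make explicit that the estimate $\|u\|_{S{D^2}}^2=\sum_k t_k|\int\mcE_k D^\alpha u\,d\lambda_\infty|^2\le \sum_k t_k\,\mathrm{vol}(\mcB_k)^2\|u\|_{W_p^m}^2\le\|u\|_{W_p^m}^2$ (after Hölder, using $\sum_k t_k=1$ and $\mathrm{vol}(\mcB_k)\le 1$) holds with the same bound in every $\R_I^n$, which is the real content of the proof.
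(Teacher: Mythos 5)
Your proposal follows essentially the same route as the paper: establish the continuous dense embedding $W_p^m[\R_I^n]\subset SD^2[\R_I^n]$ for each finite $n$ and then pass to the union $\bigcup_{n}SD^2[\R_I^n]$ and its closure to reach $SD^2[\R_I^\infty]$. The paper merely asserts the finite-dimensional step in one line, whereas you actually supply it (via the Bessel-potential identification $W_p^m=X_p^m$ together with $L^q[\R_I^n]\subset SD^2[\R_I^n]$) and make explicit the dimension-independence of the embedding constant needed to pass to the limit, so your write-up is a correct and more careful version of the same argument.
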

    \begin{proof}
      We can find  $W_p^m[\R_{I}^{n}]\subset SD^2[\R_{I}^{n}] $ as continuous dense embedding.      However $SD^2[\R_{I}^{\infty}] $ is the closure of $\bigcup\limits_{k=1}^{\infty}SD^2[\R_{I}^{k}].$\\
      That is $SD^2[\R_{I}^{\infty}] $  contains  $\bigcup\limits_{k=1}^{\infty}SD^2[\R_{I}^{k}]$ which is dense in $W_p^m[\R_{I}^{\infty}] $ as it's closure.\\
      Hence, $W_p^m[\R_{I}^{\infty}]\subset SD^2[\R_{I}^{\infty}] $ as continuous dense embedding.
      \end{proof}
            In the last, we call a function $f$ such that $\int\limits_{\R_I^\infty}|\mathcal{E}_k(x)f(x)d\lambda_\infty(x)|^p < \infty $ for every compact set $\mathcal{K}$ in $\R_I^\infty$ is said to be in $L_{loc}^{p}[\R_I^\infty].$
           
   \subsubsection{Functions of Bounded variation}
    The objective of this section is to show that every HK-integrable function is in $S{D^2}[\R_I^\infty]$. To do this, we need to discuss a certain class of functions of bounded variation in the sense of Cesari (see \cite{GL}) are well known for working in PDE (partial differential equations) and geometric measure theory. Also we consider the function of bounded variation in Vitali sense (see \cite{TY}) are applied in applied mathematics and engineering for error estimation associated with research in control theory, financial derivatives, robotics, high speed networks and in calculation of certain integrals. 
    We developed this portion through the Definition $3.38 $ and $3.39 $ of \cite{TG}.
    \begin{Def}
     A function $ f \in L^1[\R_I^\infty] $ is said to be bounded variation i.e. $ f\in BV_c[\R_I^\infty]$ if $f \in L^1[\R_I^\infty] $ there exists a signed Radon measure $\mu_i$ such that $$\int\limits_{\R_I^\infty}f(x)\frac{\partial\phi(x)}{\partial x_i}d\lambda_\infty(x)=-\int\limits_{\R_I^\infty}\phi(x)d\mu_i(x_i)$$ for $i=1,2,3,\dots,\infty$ for all $\phi \in C_{0}^{\infty}[\R_I^\infty]$
    \end{Def}
    \begin{Def}
    A function $ f $ with continuous partial derivatives is said to be of bounded variation i.e. $ f \in BV_{v}[\R_I^\infty] $ if for all $D_n=\{(a_i,b_i)\times I_n\}~, 1\leq i \leq n$  for all $(a_i, b_i) $ is an interval in $\R^n,$  $$V(f)=\lim\limits_{n \to \infty}\int_{a_1}^{b_1}\int_{a_2}^{b_2}\dots \int_{a_n}^{b_n}\left|\frac{\partial^nf(x)}{\partial x_1\partial x_2\dots \partial x_n}\right|d \lambda_\infty(x)< \infty.$$
    \end{Def}
    \begin{Def}
     We define $BV_{v, 0}[\R_I^\infty] $ by $$BV_{v, 0}[\R_I^\infty]= \left\{ f(x)\in BV_v[\R_I^\infty]: f(x) \to 0 \mbox{~as~} x_i \to \infty\right\},$$ where $x_i$ is any component of $x.$
    \end{Def}
    \begin{thm}
    The space $HK[\R_I^\infty] $ of all HK-integrable functions is contained in $S{D^2}[\R_I^\infty].$
    \end{thm}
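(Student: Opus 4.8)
The plan is to reduce the infinite-dimensional statement to the finite-dimensional one, which is available in the literature we are building on. Recall from the earlier sections that $SD^2[\R_I^\infty]$ is defined as the completion of $L^1[\R_I^\infty]$, and that $L^1[\R_I^\infty] = L^1[\widehat{\R}_I^\infty] = \bigcup_{n=1}^\infty L^1[\R_I^n]$, with the essential support of any nonzero integrable function concentrated in $\R_I^N$ for some finite $N$. The same remark applies to an HK-integrable function on $\R_I^\infty$: its support, being a set of positive measure (or a countable union of such), is concentrated in $X_1 = \widehat{\R}_I^\infty$, hence lies inside $\R_I^N$ for some $N$. So the first step is to make precise the claim that every $f \in HK[\R_I^\infty]$ is, up to a $\lambda_\infty$-null set, an $e_N$-tame function $f^N(\overline{x}) \otimes h_N(\widehat{x})$ with $f^N \in HK[\R_I^N]$.

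The second step is to invoke the finite-dimensional result. Gill and Zachary (see \cite{TG,GL}) prove that $HK[\R_I^n] \subset SD^2[\R_I^n]$ for each $n$; the mechanism is that an HK-integrable function, after integration by parts, pairs boundedly against each test kernel $\mcE_k$, because the $\mcE_k$ are $C_c^\infty$ functions with derivatives of bounded variation, and the Henstock--Kurzweil integral of a product of an HK-integrable function against a function of bounded variation is controlled (a multiplier / integration-by-parts theorem for the HK-integral). Concretely, for $f \in HK[\R_I^n]$ one gets
\begin{equation*}
\left| \int_{\R_I^n} \mcE_k(x) f(x)\, d\lambda_\infty(x) \right| \le C \|f\|_{HK}
\end{equation*}
uniformly in $k$, and since $\sum_k t_k = 1$ this yields $\|f\|_{SD^2[\R_I^n]} \le C \|f\|_{HK}$. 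Applying this to $f^N \in HK[\R_I^N]$ from the first step places $f^N \in SD^2[\R_I^N] \subset SD^2[\widehat{\R}_I^\infty]$.

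The third step is to lift back to $\R_I^\infty$. Since $SD^2[\R_I^N] \subset SD^2[\R_I^{N+1}] \subset \cdots$ and $SD^2[\R_I^\infty]$ is the closure of $SD^2[\widehat{\R}_I^\infty] = \bigcup_n SD^2[\R_I^n]$ (Theorem 3.3 in the excerpt), the element $f^N$, viewed through the tame identification, already sits in $SD^2[\R_I^\infty]$; and because $f = f^N \otimes h_N$ agrees with $f^N$ $\lambda_\infty$-a.e. and $SD^2$-spaces are spaces of equivalence classes, we conclude $f \in SD^2[\R_I^\infty]$. Hence $HK[\R_I^\infty] \subset SD^2[\R_I^\infty]$.

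The main obstacle is the second step: one genuinely needs the estimate $|\int \mcE_k f \, d\lambda_\infty| \le C\|f\|_{HK}$ with a constant independent of $k$. This rests on the fact that each $\mcE_k$ is smooth with compact support and uniformly bounded ($|\mcE_k| < 1$) with derivatives of uniformly bounded variation, so the HK integration-by-parts (multiplier) theorem applies with a uniform bound; the supports $\mcB_k$ shrink but the relevant variation norm does not blow up in a way that defeats the factor $t_k = 2^{-k}$. I would cite the finite-dimensional containment $HK[\R_I^n] \subset SD^2[\R_I^n]$ from \cite{TG} (where this uniform bound is established via the Steadman-type construction) rather than redo it, and spend the write-up instead on the tame-support reduction, which is the only genuinely infinite-dimensional ingredient.
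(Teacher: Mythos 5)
Your proposal is correct in substance and rests on the same analytic engine as the paper's proof, but the scaffolding is genuinely different. The paper argues directly on $\R_I^\infty$: it observes that each $\mcE_k$ lies in the Vitali-type class $BV_{v,0}[\R_I^\infty]$ it has just defined, invokes the Henstock multiplier inequality $\left|\int_{\R_I^\infty}\mcE_k(x)f(x)\,d\lambda_\infty(x)\right|\le \|f\|_{HK}\,V(\mcE_k)$, and then uses $\sum_k t_k=1$ to conclude $\|f\|_{SD^2}^2\le \|f\|_{HK}^2\left[\sup_k V(\mcE_k)\right]^2<\infty$; there is no reduction to finite $n$ anywhere. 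You instead first localize $f$ to some $\R_I^N$ by the tame-support observation, cite the finite-dimensional containment $HK[\R_I^n]\subset SD^2[\R_I^n]$ from Gill--Zachary, and lift back through the closure $SD^2[\R_I^\infty]=\overline{\bigcup_n SD^2[\R_I^n]}$. Your route buys something real: the paper never actually defines the HK integral or the class $HK[\R_I^\infty]$ on the infinite-dimensional space, and your tame reduction supplies the missing definition in a way consistent with how $L^1[\R_I^\infty]$ and $SD^p[\R_I^\infty]$ are themselves built. The paper's route is shorter because it has set up $BV_{v,0}[\R_I^\infty]$ precisely so the multiplier inequality can be stated once, in infinite dimensions. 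Both arguments hinge on the same uniformity, $\sup_k V(\mcE_k)<\infty$ (your constant $C$ independent of $k$): you flag it explicitly as the main obstacle, whereas the paper asserts it silently. If you write this up, include the one-line scaling check that a bump bounded by one on a cube of side $2^{-(k-1)}/\sqrt{n}$ has Vitali variation (the integral of the mixed partial of order $n$ over the cube) bounded independently of $k$; without that, neither proof closes.
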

    \begin{proof}
     Since $\mathcal{E}_k(x) $ is continuous and differentiable, therefore $ \mathcal{E}_k(x) \in BV_{v, 0}[\R_I^\infty] $ so that for $ f \in HK[\R_I^\infty] $, gives
     \begin{align*}      
     ||f||_{S{D^2}[\R_I^\infty]} &= \sum\limits_{k=1}^{\infty}t_k\left|\int\limits_{\R_I^\infty}\mathcal{E}_k(x)f(x)d\lambda_\infty(x)\right|^2 \\
     &\leq \sup\limits_{k}\left|\int\limits_{\R_I^\infty}\mathcal{E}_k(x)f(x)d\lambda_\infty(x)\right|^2 \\
     &\leq ||f||_{HK}^{2}[\sup\limits_{k}V(\mathcal{E}_k)]^2<\infty.
     \end{align*}
     So, $ f \in S{D^2}[\R_I^\infty].$
    \end{proof}
    \section{Conclusion}
    We have constructed a new class of separable Banach spaces,
$SD^p [\R_I^\infty],~ 1 \leq p \leq \infty$, which contain each $L^p$-space as a dense continuous and compact embedding. These spaces have the remarkable property
that, for any multi-index $\alpha,~||D^{\alpha}u||_{SD} = ||u||_{SD}.$ We have shown that our
spaces contain the non-absolutely integrable functions and the space of test
functions $D[\R_I^\infty  ],$ as a dense continuous embedding. We have discussed their
basic properties and their relationship to $D^{ '} [\R_I^\infty], ~\mcS[\R_I^\infty]$ and $\mcS^{ ' }[\R_I^\infty].$
\section{Acknowledgement: } The authors would like to thank Prof. Tepper L. Gill for suggesting the formation of $S{D^p}[\R_I^\infty]$ and
making valuable suggestions  that improve the presentation of the paper.
\bibliographystyle{amsalpha}

\end{document}